\numberwithin{equation}{section}
\newtheorem{theorem}{Theorem}[section]
\newtheorem{lemma}[theorem]{Lemma}
\newtheorem{corollary}[theorem]{Corollary}
\newtheorem{proposition}[theorem]{Proposition}
\newtheorem{claim}[theorem]{Claim}
\theoremstyle{definition}
\newtheorem{definition}[theorem]{Definition}
\theoremstyle{remark}
\newtheorem{remark}[theorem]{Remark}
\newcommand{\D}{{\mathbb D}}
\newcommand{\N}{{\mathbb N}}
\newcommand{\R}{{\mathbb R}}
\newcommand{\Z}{{\mathbb Z}}
\renewcommand{\H}{{\mathbb H}}
\newcommand{\x}{{\mathbf{x}}}
\newcommand{\y}{{\mathbf{y}}}
\newcommand{\bb}{\mathbf b}
\newcommand{\J}{{\mathcal J}}
\newcommand{\p}{\omega}
\newcommand{\cross}{\times}
\newcommand{\mcg}{\mathcal{MC\kern0.04emG}}
\newcommand{\ml}{\mathcal{M \kern0.07emL}}
\newcommand{\mf}{\mathcal{M \kern0.07emF}}
\newcommand{\pml}{\mathcal{P \kern0.07em M \kern0.07em L}}
\newcommand{\pmf}{\mathcal{P \kern0.07em M \kern0.07em F}}
\newcommand{\ue}{\mathcal{U \kern0.07em E}}
\newcommand{\B}{\mathcal{B}}
\newcommand{\cG}{\mathcal{G}}
\newcommand{\cC}{\mathcal{C}}
\newcommand{\teichmuller}{Teichm{\"u}ller{ }}
\def\fmin{\mathcal{F}_{min}}
 \let\c@theorem=\c@subsection
 \let\c@conjecture=\c@subsection
 \let\c@lemma=\c@subsection
 \let\c@proposition=\c@subsection
 \let\c@claim=\c@subsection
 \let\c@question=\c@subsection
 \let\c@criterion=\c@subsection
 \let\c@vfconj=\c@subsection
 \let\c@definition=\c@subsection
 \let\c@notation=\c@subsection
 \let\c@remark=\c@subsection
 \let\c@example=\c@subsection
 \let\c@equation=\c@subsection
 \let\c@figure=\c@subsection
 \let\c@wrapfigure=\c@subsection
\begin{document}

\title[harmonic measure]{Harmonic measures for distributions with finite support on the mapping class group are singular}
\author[Gadre]{Vaibhav S Gadre}
\address{Mathematics 253-37, California Institute of Technology, Pasadena, CA 91125, USA}
\email{vaibhav@caltech.edu}

\keywords{harmonic measure, random walk, mapping class group, \teichmuller space}
\subjclass[2010]{37E30, 32G15, 20F65}

\begin{abstract}
Kaimanovich and Masur showed that a random walk on the mapping class group for an initial distribution with finite first moment and whose support generates a non-elementary subgroup, converges almost surely to a point in the space $\pmf$ of projective measured foliations on the surface. This defines a harmonic measure on $\pmf$. Here, we show that when the initial distribution has finite support, the corresponding harmonic measure is singular with respect to the natural Lebesgue measure on $\pmf$.
\end{abstract}

\maketitle

\section{Introduction} \label{Intro}
Let $\Sigma$ be an orientable surface of finite type. The mapping class group $G$ of $\Sigma$ is the group of orientation preserving diffeomorphisms of $\Sigma$ modulo those isotopic to identity. The \teichmuller space $T(\Sigma)$ of $\Sigma$ is the space of marked conformal structures on $\Sigma$ modulo biholomorphisms isotopic to identity. The mapping class group $G$ acts on $T(\Sigma)$ by changing the marking. The \teichmuller space is homeomorphic to an open ball in $\R^{6g-6+3m}$, where $g$ is the genus and $m$ is the number of punctures of $\Sigma$. Thurston showed that $T(\Sigma)$ can be compactified by the space $\pmf$ of projective measured foliations on $\Sigma$. The action of $G$ extends continuously to $\pmf$.

In \cite{Kai-Mas1}, Kaimanovich and Masur considered random walks on $G$ with some initial distribution $\mu$. It is possible to project the random walk into $T(\Sigma)$ by choosing a base-point and then using the action of $G$. They showed that if the subgroup of $G$ generated by the support of $\mu$ is non-elementary, then almost every sample path converges to some uniquely ergodic foliation in $\pmf$. This means that there is a well defined hitting measure $\nu$ on $\pmf$ coming from the random walk. Moreover, the measure $\nu$ is a harmonic measure in the sense that if $A$ is a measurable set of $\pmf$ then
\[
\nu(A) = \sum_{g \in G} \mu(g) \nu(g^{-1}A)
\]

Complete train tracks on $\Sigma$ define an atlas of charts on the space $\mf$ of measured foliations on $\Sigma$. The set of integer weights on a train track correspond to mutli-curves on $\Sigma$ carried by it. So the transition functions between charts have to preserve the integer weights. As a result, even though this process does not necessarily  give, after projectivization, a global {\em Lebesgue} measure on $\pmf$, there is still a well-defined Lebesgue measure class.

The main theorem here is

\begin{theorem}\label{singular}
If $\mu$ is a finitely supported initial probability distribution on $G$ such that the subgroup of $G$ generated by the support of $\mu$ is non-elementary, then the induced harmonic measure $\nu$ on $\pmf$ is singular with respect to the Lebesgue measure class.
\end{theorem}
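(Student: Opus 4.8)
The plan is to compare $\nu$ with the Lebesgue class $m$ by probing both through the large-scale behaviour of the random walk projected to \teichmuller space, and to exhibit a definite dimensional discrepancy: a walk with finite support $S = \supp\mu$ has bounded jumps, so in $n$ steps it visits at most $|S|^{\,n}$ points, yet it escapes to infinity at a definite linear speed in the \teichmuller metric; this should force $\nu$ to be carried by a subset of $\pmf$ of strictly smaller dimension (for a metric adapted to the basepoint) than $\pmf$ itself, whereas $m$ charges no such set. The overall strategy is the one used by Kaimanovich--Le Prince and Blach\`ere--Ha\"issinsky--Mathieu for lattices and hyperbolic groups; the new difficulty is that \teichmuller space is not Gromov hyperbolic, so shadows are badly distorted by the thin part.

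Fix $x_0 \in T(\Sigma)$ and let $d$ be the \teichmuller metric. By \cite{Kai-Mas1}, for a.e.\ sample path $w_n x_0$ converges to a uniquely ergodic $\xi \in \pmf$ with law $\nu$; unique ergodicity gives the \teichmuller ray $r_\xi$ from $x_0$ to $\xi$, and by now-standard subadditive arguments $w_n x_0$ tracks $r_\xi$ sublinearly, $d(w_n x_0, r_\xi(\ell n)) = o(n)$ a.s., with drift $\ell = \lim \tfrac1n d(x_0, w_n x_0)$. Finite support makes $\ell$ and the asymptotic entropy $h = \lim -\tfrac1n\log\mu^{*n}(w_n)$ finite (e.g.\ $\ell \le \max_{g\in S} d(x_0, gx_0)$), while non-elementarity of the subgroup generated by $S$ makes them positive (the walk has non-trivial Poisson boundary and positive speed). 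Equip $\pmf$ with a metric $\rho$ built from intersection numbers at $x_0$; in train-track charts $\rho$ is bi-Lipschitz to Euclidean, so $m$ is locally Ahlfors-regular of dimension $\delta = \dim\pmf$ and assigns measure zero to every set of $\rho$-Hausdorff dimension $<\delta$. A shadow estimate then bounds the dimension of $\nu$: the shadow of $w_n x_0$ seen from $x_0$ lies in a $\rho$-ball about $\xi$ of radius $e^{-\ell n + o(n)}$, while carrying $\nu$-mass comparable to $\mu^{*n}(w_n)$, which by the Shannon--McMillan--Breiman theorem for the walk equals $e^{-(h + o(1))n}$ along $\nu$-typical $\xi$. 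Hence the lower local dimension of $\nu$ is at most $h/\ell$ for $\nu$-a.e.\ $\xi$, so $\nu$ gives full measure to a set of $\rho$-dimension $\le h/\ell$.

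It therefore suffices to prove the strict inequality $h < \ell\,\delta$ for \emph{every} finitely supported $\mu$; this is where finiteness of the support is essential, and it is the main obstacle. The non-strict inequality $h \le \ell\,\delta$ I would obtain from a matching lower bound for the Lebesgue mass of shadows, which needs Rafi's combinatorial formula for the \teichmuller metric and the description of shadows of balls via subsurface projections, with the thin part controlled by using that each step of the walk produces only bounded twisting. For strictness, equality $h = \ell\,\delta$ would force $\nu$ to be equivalent to a measure of maximal dimension — the Lebesgue (Patterson--Sullivan) class — hence, via a Radon--Nikodym cocycle computation, to make the harmonic cocycle of the walk cohomologous to the Jacobian cocycle of the $G$-action on $(\pmf, m)$; but a finitely supported walk cannot do this, because in $n$ steps it occupies only $|S|^{\,n}$ points, far too few to spread its hitting measure over a \teichmuller ball of radius $\ell n$ the way the uniform class does, so the harmonic measure is unavoidably too concentrated. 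Turning this counting deficit into a uniform strict dimension gap, in the presence of the thin part, is the crux of the argument. Once $h < \ell\,\delta$ is established, the set of full $\nu$-measure found above has $\rho$-dimension $<\delta$, hence is $m$-null, so $\nu \perp m$ and $\nu$ is singular with respect to the Lebesgue class, which is Theorem~\ref{singular}.
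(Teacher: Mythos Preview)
Your strategy is entirely different from the paper's and, as written, has a genuine gap at precisely the point you flag as ``the crux of the argument.'' The paper does \emph{not} use entropy, drift, or any dimension comparison. Instead it exploits a concrete local discrepancy: near the fixed set of a Dehn twist, the Lebesgue measure of a nested family of train-track charts decays like $1/n^{j}$ under $n$ iterates of the twist (Proposition~\ref{Dehn}), whereas Maher's Theorem~\ref{exp-decay} forces the harmonic measure of the same charts to decay like $e^{-cn}$. A Borel--Cantelli argument (Proposition~\ref{BC2}) then produces a set of positive Lebesgue and zero harmonic measure, and ergodicity of the $G$-action on $\pmf$ promotes this to full Lebesgue measure. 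No inequality of the form $h<\ell\,\delta$ is ever invoked or needed.

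In your approach, the step that actually fails to go through is the strict inequality $h<\ell\,\delta$. The non-strict bound $h\le\ell\,\delta$ is plausible (though the shadow estimate you need in $\pmf$ is itself delicate because of the thin part, as you note), but your argument for strictness is not a proof: saying that equality would make $\nu$ equivalent to Lebesgue, and that a finitely supported walk ``cannot do this'' because it visits only $|S|^{n}$ points in $n$ steps, is circular --- the conclusion you want is exactly that $\nu$ is not equivalent to Lebesgue. The counting $|S|^{n}$ gives only $h\le\log|S|$, which by itself says nothing about $h/\ell$ versus $\delta$. For hyperbolic groups acting cocompactly, strictness can be extracted from a genuine Green-metric/Patterson--Sullivan rigidity statement (Blach\`ere--Ha\"issinsky--Mathieu), but the mapping class group does not act cocompactly on any hyperbolic space with boundary $\pmf$, and no such rigidity is available here. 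Indeed, the announced Eskin--Mirzakhani--Rafi result mentioned in the introduction shows that \emph{some} (infinitely supported) $\mu$ does realize a Lebesgue-class harmonic measure, so any argument must use finite support in an essential, quantitative way --- which your sketch does not. The paper's route sidesteps all of this by never comparing global dimensions; it works entirely with the explicit polynomial-versus-exponential scaling at reducible elements.
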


The proof of the theorem works by actually constructing a singular set for $\nu$ i.e. a measurable set in $\pmf$ that has full Lebesgue measure and zero harmonic measure.

Conversely, one can ask if there is a Lebesgue measure $\ell$ on $\pmf$ such that $\ell$ is a harmonic measure for some initial distribution? If it exists, the distribution has infinite support as a corollary to our theorem. In fact, recently Eskin, Mirzakhani and Rafi have announced that such a distribution exists. In particular, they show that a certain Lebesgue measure coming from asymptotic volumes of extremal length balls arises as a harmonic measure for a random walk on $G$.

\subsection{General groups:} The study of boundary phenomena for random processes on groups was initiated by Furstenberg in the 60's when he showed that semi-simple non-compact Lie groups have a natural boundary for Brownian motion on them. Moreover, he showed that every lattice in the Lie group carries a random walk for which the measure theoretic boundary is the geometric boundary for the Brownian motion. This let to the first rigidity results. When a group is the fundamental group of a manifold with with a geometric boundary (for instance, non-positively curved manifolds), in addition to the harmonic measure for Brownian motion, there are two other naturally defined measures on the group boundary: the visual or Lebesgue measure, and the Patterson-Sullivan measure. In case of compact negatively curved manifolds, it is known that either the manifold is locally symmetric in which case the three measures coincide, or the measures are all mutually singular. For general groups, there are some known results.

In \cite{Gui-LeJ}, Guivarc'h and LeJan showed that the harmonic measure on $S^1$ given by a finitely supported random walk on the fundamental group of a surface with punctures but finite volume, is singular with respect to the Lebesgue measure on $S^1$. This result has been generalized to certain types of finitely generated groups of circle diffeomorphisms by Deroin, Kleptsyn and Navas \cite{Der-Kle-Nav}. In \cite{Lyo}, Lyons showed that there are examples of finitely supported random walks on universal covers of finite graphs such that associated harmonic measure coincides with the Patterson-Sullivan measure. In \cite{Kai-LeP}, Kaimanovich and LePrince show that any Zariski dense countable subgroup of $SL(n, \R)$ carries a non-degenerate finitely supported random walk such that the induced harmonic measure on the flag space is singular.

\subsection{Outline of the paper:} In Section~\ref{Teich}, we begin with some preliminaries from \teichmuller theory, and define the Lebesgue measure class on $\pmf$. In Section~\ref{Random}, we give some background on random walks on groups, and state Theorem~\ref{Poisson} due to Kaimanovich and Masur for mapping class groups, and the main theorem of this paper. In Section~\ref{Borel-Cantelli}, we state and prove the key measure theory result that we exploit in the construction of the singular set. In Section~\ref{Torus}, we explain the construction of the singular set in the special case of the torus, and indicate how the construction should generalize. In the process, we introduce the techniques of classical interval exchanges. In Section~\ref{Niem}, we consider non-classical interval exchanges, which provide charts on $\pmf$. We state the key technical theorem, Theorem~\ref{Uniform-distortion}. In Section~\ref{Combinatorics}, we construct non-classical interval exchanges with the particular combinatorics needed to carry out the construction. We also show the necessary estimates for the Lebesgue measure. In Section~\ref{Curvecomplex}, we provide some background on the curve complex, and state Klarreich's Theorem~\ref{Klarreich}. We also explain the freedom it allows us to choose the space to project the mapping class group random walk. In Section~\ref{Projections}, we introduce the marking complex and relative space, and provide some background on sub-surface projections defined on these spaces. In Section~\ref{Facts}, we outline the key facts due to Maher about half-spaces in the relative space. In Section~\ref{Maher}, we state and prove the main decay result, Theorem~\ref{exp-decay}, due to Maher. This allows us to estimate the decay of harmonic measure with respect to nesting under a sub-surface projection. In Section~\ref{Pushin}, we outline a technical trick that provides the setup necessary to apply Theorem~\ref{exp-decay}. Finally, in Section~\ref{Singular}, we put together all the ingredients to construct a singular set.

\subsection{Acknowledgements:} The research was supported by NSF graduate fellowship under Nathan Dunfield by grant \# 0405491 and \#0707136. This work was completed while the author was at University of Illinois, Urbana-Champaign. The author would like to thank his advisor, Nathan Dunfield, for extensive discussions and constant support. The author thanks Joseph Maher for numerous discussions, and giving permission to include Theorem~\ref{exp-decay} here. The author also thanks Saul Schleimer, Chris Connell and Chris Leininger for helpful conversations during the course of this work.

\section{Preliminaries from \teichmuller theory} \label{Teich}

Let $\Sigma_{g,m}$ be an orientable surface with genus $g$ and $m$ punctures. For brevity, we will drop the subscripts and call it just $\Sigma$. The group of orientation preserving diffeomorphisms of $\Sigma$ that preserve the set of punctures modulo those isotopic to identity, is called the mapping class group of $\Sigma$. Throughout, we shall denote the mapping class group by $G$.

\subsection{\teichmuller space:}
The space of marked conformal structures on $\Sigma$ modulo biholomorphisms isotopic to identity is called the \teichmuller space of $\Sigma$. By the uniformization theorem, modulo isometries isotopic to identity, there is a unique marked hyperbolic metric in each marked conformal class. This means that the \teichmuller space can also be thought of as the space of marked hyperbolic metrics on $\Sigma$ modulo isometries isotopic to identity. We denote the \teichmuller space by $T(\Sigma)$. The mapping class group $G$ acts on $T(\Sigma)$ by changing the marking.

It is a classical theorem that topologically, the \teichmuller space $T(\Sigma)$ is homeomorphic to an open ball in $\R^{6g-6+3m}$, where $g$ is the genus and $m$ is the number of punctures of $\Sigma$. Thurston showed that $T(\Sigma)$ can be compactified by the space $\pmf$ of projective measured foliations on $\Sigma$, such that the action of the mapping class group $G$ on $T(\Sigma)$ extends to a continuous action on the boundary, $\pmf$ of $T(\Sigma)$. The space $\pmf$ is homeomorphic to a sphere of dimension $6g-7+3m$. It is also the same as the space of projective measured laminations $\pml$ on the surface by a homeomorphism that is mapping class group equivariant.

\subsection{Lebesgue measure on $\pmf$:}
A train track on the surface $\Sigma$ is an embedded 1-dimensional CW complex in which there is a common line of tangency to all the 1-dimensional branches that join at a 0-dimensional switch. This splits the set of branches incident on a switch into two disjoint subsets, which can be arbitrarily assigned as outgoing edges and incoming edges. Train tracks whose complementary regions are either triangles or once punctured monogons are said to be {\em complete}. Here, we shall restrict only to complete train tracks.

One can assign non-negative weights to the branches of a train track so that the sum of the weights of the outgoing branches at a switch is equal to the sum of the weights of the incoming branches. Each such choice of weights compatible with the switch conditions, defines a measured foliation on $\Sigma$. A measured foliation defined this way is said to be {\em carried} by the train track. The set of measured foliations carried by a complete train track has the structure of a cone (i.e. a homogeneous set) in $\R^{6g-6+3m}_{\geqslant  0}$, over a convex polytope of dimension $6g-7+3m$ with finitely many extremal vertices. Moreover, the directions determined by the extremal vertices are rational; hence we can find a minimal integer point on each of them. These integer points represent simple closed curves on $\Sigma$, and are called the {\em vertex cycles} of the train track. In general, integer points in the cone represent multi-curves carried by the train track.

Complete train tracks on $\Sigma$ define an atlas of charts on the space $\mf$ of measured foliations on $\Sigma$. If we choose to projectivize by a normalization, such as the sum of weights is 1, the transition functions do not preserve this normalization. So we do not get a global measure on $\pmf$ by this process. However, the transition functions have to preserve the multi-curves i.e the integer points. Hence, there is a well defined measure class on $\pmf$. We call the measure class on $\pmf$ coming from these charts the {\em Lebesgue} measure class.

\section{Random walks}\label{Random}

Let $G$ be a group and $\mu$ a probability distribution on $G$. A random walk on $G$ is a Markov chain with transition probabilities $p(g,h)= \mu(g^{-1}h)$. It is assumed that one starts at the identity element in $G$ at time zero. Denote the group element generated in $n$ steps of the random walk by $\omega_n$ i.e. the group element $\omega_n$ is the product  $g_1g_2 \cdots g_n$ where each group element $g_i$ is sampled by $\mu$. The distribution of $\omega_n$ is given by the $n$-fold convolution $\mu^{(n)}$ of the starting distribution $\mu$. The {\em path space} for the random walk is the probability space $(G^{\Z_+}, \mathbb{P})$, where $G^{\Z_+}$ is the set of all one-sided infinite sequences of elements of $G$. The probability measure $\mathbb{P}$ is determined by the convolutions $\mu^{(n)}$ using the Kolmogorov extension theorem. The group $G$ acts on the path space on the left, as opposed to the increments $g_n$ from $\omega_{n-1}$ to $\omega_n$, which get multiplied on the right. For a general background about random walks on infinite groups, see \cite{Woe}.

From now on, let $G$ be the mapping class group. A subgroup of $G$ is {\em non-elementary} if it contains a pair of psuedo-Anosov elements with distinct stable and unstable measured foliations. Kaimanovich and Masur proved the following theorem in \cite{Kai-Mas1}

\begin{theorem} \label{Poisson}
If $\mu$ is a probability measure on the mapping class group $G$ such that the group generated by its support is non-elementary, then there exists a unique $\mu$-stationary probability measure $\nu$ on $\pmf$, which is purely non-atomic and concentrated on the subset $\ue \subset \pmf$ of uniquely ergodic foliations. For any $X \in T(\Sigma)$, and almost every sample path $\p = \{ \omega_n\}$ determined by $(G,\mu)$, the sequence $\omega_n X$ converges to a limit $F(\p)$ in $\ue$, and the distribution of the limit points $F(\p)$ is given by $\nu$.
\end{theorem}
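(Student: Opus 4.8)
The overall approach is the classical one for boundaries of random walks, due in this form to Furstenberg: first produce a $\mu$-stationary measure on the compact space $\pmf$ by a fixed-point argument, then realize it as the exit distribution of the walk by a martingale argument, and finally use the dynamics of the $G$-action on the Thurston compactification to show this exit distribution is carried by single, uniquely ergodic foliations. For the first step, $\pmf$ is a sphere, hence compact, so the set $P(\pmf)$ of Borel probability measures on it is convex and weak-$*$ compact, and the convolution map $\lambda\mapsto\mu*\lambda:=\sum_g\mu(g)\,g_*\lambda$ is an affine, weak-$*$ continuous self-map of $P(\pmf)$; a fixed-point theorem (Schauder--Tychonoff) gives $\nu$ with $\mu*\nu=\nu$. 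Such a $\nu$ has no atoms: otherwise let $m=\max_F\nu(\{F\})>0$, attained on a finite set $A$ with $|A|\le 1/m$, and observe that stationarity forces $g^{-1}A\subseteq A$ for every $g\in\supp\mu$, so $A$ is a finite invariant set for the subgroup $\Gamma$ generated by $\supp\mu$. But $\Gamma$ is non-elementary, so it contains pseudo-Anosov maps $\varphi,\psi$ with $\{F_\varphi^+,F_\varphi^-\}$ and $\{F_\psi^+,F_\psi^-\}$ disjoint, and a finite $\Gamma$-invariant subset of $\pmf$ is fixed pointwise by suitable nonzero powers of $\varphi$ and of $\psi$, whose only fixed points in $\pmf$ are precisely those two pairs; hence $A=\varnothing$, a contradiction. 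The same argument shows \emph{every} $\mu$-stationary measure on $\pmf$ is purely non-atomic.

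To realize $\nu$ as an exit measure, write $\omega_n=g_1\cdots g_n$ with independent increments $g_i$ of law $\mu$ and let $\mathcal F_n=\sigma(g_1,\dots,g_n)$. Using $\mu*\nu=\nu$, one checks that for each $f\in C(\pmf)$ the sequence $\big\langle(\omega_n)_*\nu,\,f\big\rangle$ is an $\mathcal F_n$-martingale bounded by $\|f\|_\infty$. Applying the martingale convergence theorem to a countable weak-$*$ dense family of functions $f$, for $\mathbb{P}$-a.e.\ sample path $\p=\{\omega_n\}$ the measures $(\omega_n)_*\nu$ converge weak-$*$ to a random probability measure $\nu_\p$ on $\pmf$; taking expectations gives $\nu=\E[\nu_\p]$.

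The crux --- and the step I expect to be the main obstacle --- is the geometric lemma that, for $\mathbb{P}$-a.e.\ $\p$, the orbit $\omega_n X$ converges in the Thurston compactification $\overline{T(\Sigma)}=T(\Sigma)\cup\pmf$ to a uniquely ergodic foliation $F(\p)$, and $(\omega_n)_*\nu\to\delta_{F(\p)}$ weak-$*$. This is where Teichm\"uller geometry enters, and three facts must be combined. First, the projected walk $\omega_n X$ almost surely leaves every compact subset of $T(\Sigma)$ --- a transience property coming from $\Gamma$ being non-elementary, hence non-amenable --- so all of its accumulation points lie on $\pmf$. Second, a contraction property of the $G$-action on $\overline{T(\Sigma)}$: roughly, a mapping class moving $X$ near a uniquely ergodic foliation $\eta$ pushes all but a vanishing part of any non-atomic measure on $\pmf$ toward $\eta$; since $\nu$ is non-atomic, this upgrades weak-$*$ convergence of $(\omega_{n_k})_*\nu$ along a subsequence with $\omega_{n_k}X\to\eta$ to convergence to $\delta_\eta$, and because $(\omega_n)_*\nu$ converges along the full sequence this forces the Thurston accumulation point $\eta$ of $\{\omega_n X\}$ to be unique. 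Third, Masur's criterion: the projected walk is almost surely recurrent to a thick part of $T(\Sigma)$ (it cannot be trapped in a region where a fixed curve stays short without $\Gamma$ being elementary), so that unique Thurston limit is uniquely ergodic. Finally, replacing the basepoint $X$ by any other moves $\omega_n X$ by a bounded Teichm\"uller distance and so does not change the limit, so $F(\p)$ is independent of $X\in T(\Sigma)$.

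It remains to assemble the pieces. From $\nu=\E[\nu_\p]=\E[\delta_{F(\p)}]$ with $F(\p)\in\ue$ almost surely, the measure $\nu$ is concentrated on $\ue$, and for every Borel $A\subseteq\pmf$ we have $\nu(A)=\mathbb{P}[F(\p)\in A]$ --- that is, the distribution of the limit points $F(\p)$ is $\nu$; together with the first paragraph, $\nu$ is purely non-atomic. For uniqueness, let $\nu'$ be any $\mu$-stationary measure; it too is non-atomic, so the argument of the previous two paragraphs applies to $\nu'$ and yields $(\omega_n)_*\nu'\to\delta_{F(\p)}$ with the \emph{same} $F(\p)=\lim_n\omega_n X$, since that limit was characterized purely geometrically and is independent of the stationary measure used. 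Hence $\nu'=\E[\delta_{F(\p)}]=\nu$, so $\nu$ is the unique $\mu$-stationary measure on $\pmf$.
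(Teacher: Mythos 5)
This theorem is not proved in the paper at all: it is Kaimanovich--Masur's convergence theorem, quoted from \cite{Kai-Mas1} as an input, so there is no internal proof to compare your attempt against. That said, your outline does follow the strategy Kaimanovich and Masur actually use (Furstenberg's scheme: a fixed-point argument for existence of a stationary measure, non-atomicity from the non-elementary hypothesis, the bounded martingale $(\omega_n)_*\nu$, and a contraction lemma for sequences $g_nX$ converging to a uniquely ergodic point), and your existence, non-atomicity, and martingale paragraphs are correct as written.

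The genuine gap is in the geometric crux, which is precisely the content of \cite{Kai-Mas1}. The contraction property you invoke is only available once you already know that an accumulation point $\eta$ of $\{\omega_n X\}$ lies in $\ue$ (for a minimal but non-uniquely-ergodic $\eta$ the simplex of transverse measures is positive-dimensional and no such contraction toward a single point holds), so your second and third ``facts'' are circularly ordered: contraction cannot be used to pin down the accumulation point before you know the accumulation points are uniquely ergodic. And the tool you propose for that, Masur's criterion, is a statement about Teichm{\"u}ller \emph{geodesic rays} recurrent to the thick part; the orbit $\{\omega_n X\}$ is not a geodesic, and passing from recurrence of the walk to unique ergodicity of its limit would require a geodesic-tracking statement that is neither available at this stage nor how \cite{Kai-Mas1} proceed. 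What Kaimanovich and Masur actually do is prove, \emph{before} any convergence statement, that every $\mu$-stationary measure on $\pmf$ is concentrated on $\ue$: non-atomicity kills atoms, the non-minimal locus is handled by its equivariant fibration over the countable set of isotopy classes of systems of curves, and the minimal-but-not-uniquely-ergodic locus by a separate intersection-number/simplex-of-invariant-measures argument. Only then do they combine the martingale limit with the contraction lemma (whose exceptional set must also be shown to be $\nu$-null, another point your sketch elides) to obtain convergence of $\omega_nX$ and the identification of $\nu$ as the exit law; granting all that, your final uniqueness paragraph is fine. As written, though, your proposal asserts the hard step rather than proving it.
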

The measure $\nu$ on $\pmf$ is $\mu$-stationary in the sense that if $A$ is a measurable set in $\pmf$ then the measure $\nu$ satisfies
\begin{equation}
\nu(A)= \sum_{g \in G} \mu(g) \nu(g^{-1} A)
\end{equation}
The measure $\nu$ is called a {\em harmonic} measure because of the above property.

\begin{remark}
Kaimanovich and Masur state the result only for closed surfaces, but the proof works for surfaces with punctures also, as pointed out in \cite{Far-Mas}. Kaimanovich and Masur also show that if, in addition to the hypothesis of Theorem~\ref{Poisson}, the initial distribution $\mu$ has finite entropy and finite first logarithmic moment with respect to the \teichmuller metric, then the measure space $(\pmf, \nu)$ is the Poisson boundary of $(G, \mu)$.
\end{remark}

The main theorem we prove is

\begin{theorem}\label{singular}
If $\mu$ is a finitely supported probability distribution on the mapping class group $G$ such that the subgroup of $G$ generated by the support is non-elementary, then the induced harmonic measure $\nu$ on $\pmf$ is singular with respect to the Lebesgue measure class.
\end{theorem}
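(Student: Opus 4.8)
The plan is to exploit the contrast between the "drift" behavior of a random walk, which in the mapping class group setting produces sequences that converge to $\pmf$ along directions governed by a positive speed in the curve complex, versus the "equidistributed" or "generic" behavior of Lebesgue-almost-every foliation, which by ergodicity of the \teichmuller geodesic flow visits every part of moduli space with the expected frequency. More precisely, I would build a measurable set $E \subset \ue \subset \pmf$ with $\ell(E^c) = 0$ and $\nu(E) = 0$, by choosing $E$ to be the set of foliations whose (say) \teichmuller geodesic ray from the basepoint enters infinitely many disjoint "thin parts" at a prescribed (slow, sub-geometric) rate. Lebesgue-typical foliations do this by a Borel--Cantelli argument combined with the rate of decay of Lebesgue measure of the relevant cylinder sets in a non-classical interval exchange chart, whereas harmonic-typical sample paths do \emph{not}, because the harmonic measure of the nested sets shrinks exponentially — this is exactly the content of Maher's decay theorem (Theorem~\ref{exp-decay}) that the excerpt promises, phrased in terms of sub-surface projections in the relative space.

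The key steps, in order, would be: (1) Reduce to a statement about a single chart: pick a complete train track / non-classical interval exchange whose polytope carries an open set of $\pmf$ of positive Lebesgue measure, and work inside it; it suffices to produce the singular set there and then translate by $G$. (2) Set up a sequence of combinatorially-defined ``targets'': nested subdivisions of the interval exchange (the combinatorics constructed in Section~\ref{Combinatorics}) together with the estimate that the Lebesgue measure of the $k$-th target decays only polynomially (or at any rate slowly enough that $\sum_k \ell(\text{target}_k) = \infty$, with enough independence to apply the second Borel--Cantelli lemma — this is the ``key measure theory result'' of Section~\ref{Borel-Cantelli}). Conclude that Lebesgue-a.e. point lies in infinitely many targets. (3) Show that the same targets, measured by $\nu$, have $\sum_k \nu(\text{target}_k) < \infty$: here one uses Klarreich's theorem (Theorem~\ref{Klarreich}) to pass from $\pmf$ to the Gromov boundary of the curve complex, the freedom to project the random walk into a convenient space (the relative space / marking complex of Sections~\ref{Projections}--\ref{Facts}), the ``pushing'' trick of Section~\ref{Pushin} to arrange the hypotheses, and finally Maher's exponential decay of harmonic measure under deep sub-surface nesting (Theorem~\ref{exp-decay}) to bound $\nu(\text{target}_k)$ by something summable. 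Then the first Borel--Cantelli lemma gives $\nu$-a.e. sample path avoids all but finitely many targets. (4) Combining, the set $E$ of points lying in infinitely many targets has full Lebesgue measure and zero harmonic measure, proving singularity.

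The main obstacle — and the technical heart of the paper — is step (3), specifically reconciling the combinatorial targets, which are defined intrinsically in terms of the interval-exchange/train-track charts on $\pmf$, with the metric machinery (curve complex, sub-surface projections, half-spaces in the relative space) in which Maher's decay estimate lives. One must show that a foliation lying in the $k$-th combinatorial target forces a large sub-surface projection coordinate (equivalently, deep nesting of half-spaces) so that Theorem~\ref{exp-decay} applies; this is precisely where the non-classical interval exchanges of Section~\ref{Niem}, the uniform distortion estimate Theorem~\ref{Uniform-distortion}, and the specific combinatorics of Section~\ref{Combinatorics} are needed, and getting the two decay rates to point in opposite directions (slow for $\ell$, exponential for $\nu$) with matching target sets is the crux. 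A secondary difficulty is arranging enough independence among the targets for the divergence Borel--Cantelli lemma on the Lebesgue side, which the measure-theoretic lemma of Section~\ref{Borel-Cantelli} is designed to handle. Everything else — the reduction to one chart, the topological identifications $\pmf \cong \pml$ and Klarreich's boundary identification, and the finite-support hypothesis entering only through Maher's theorem — is comparatively routine.
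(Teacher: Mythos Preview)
Your proposal is correct and follows essentially the same approach as the paper: build combinatorial targets inside a non-classical interval exchange chart via iterated Dehn-twist splitting sequences, use the uniform-distortion theorem plus the explicit Lebesgue estimate to get polynomial decay and pairwise almost-independence, use the push-in sequence and Maher's Theorem~\ref{exp-decay} to get exponential decay of harmonic measure, and conclude via the generalized Borel--Cantelli Proposition~\ref{BC2}. One small correction: the Borel--Cantelli step only yields a set of \emph{positive} (not full) Lebesgue measure and zero harmonic measure; full Lebesgue measure is then obtained by taking the $G$-orbit and invoking ergodicity of the $G$-action on $\pmf$, which you allude to in step (1) but misstate in step (4).
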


To construct a singular set for $\nu$ i.e. a measurable set of $\pmf$ that has full Lebesgue measure but zero harmonic measure, the crucial point is to understand the action of the reducible elements, say Dehn twists, from the measure theoretic point of view. We construct complete train tracks on $\Sigma$ with the property that a positive Dehn twist in one of its vertex cycles can be realized as a splitting sequence of the train track. After normalizing so that the measures of the original chart are 1, we show that the Lebesgue measure of the charts obtained after applying the Dehn twist splitting sequence $n$ successive times, are $\approx 1/n^k$, for some positive integer $k$. On the other hand, by a theorem of Maher, the harmonic measures of the same charts, are $\leqslant  \exp (-n)$. It is this discrepancy that we exploit to give a construction of the singular set. The key measure theoretic tool is the slightly generalized Borel-Cantelli lemma, namely Proposition~\ref{BC2}, that we state in the next section.

\section{The Borel-Cantelli Setup} \label{Borel-Cantelli}

In this section, we state and prove the key theorem in measure theory that we use to show that our construction in Section~\ref{Singular} gives us a singular set.

We state a version \cite{Str} of the Borel-Cantelli lemma that generalizes the classical Borel-Cantelli lemma to the case when the sequence of events are {\em pairwise almost independent} instead of independent.

\begin{lemma}[Borel-Cantelli] \label{BC1}
Let $(\Omega, \B, \mu)$ be a probability space and $\{ X_n\}_{1}^{\infty}$ be a sequence of $\B$-measurable sets such that there exists a positive integer $d$ and a constant $c>1$ for which
\begin{equation}\label{pairwise-ind}
\mu(X_m \cap X_n) < c \mu(X_m) \mu(X_n), \hspace{5mm} m \in \N \hspace{2mm} \text{and} \hspace{2mm} n \geqslant m+d
\end{equation}
Then
\[
\sum_{n=1}^{\infty} \mu(X_n) = \infty \implies \mu(\limsup_{n \to \infty} X_n) \geqslant \frac{1}{4c}
\]
On the other hand, with no constraint on $\mu(X_m \cap X_n)$ we have
\[
\sum_{n=1}^{\infty} \mu(X_n) < \infty \implies \mu(\limsup_{n \to \infty} X_n) = 0
\]
\end{lemma}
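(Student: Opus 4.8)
The second implication requires no hypothesis on intersections: it is the classical first Borel--Cantelli lemma, which I would dispatch in one line by writing $\limsup_n X_n = \bigcap_N \bigcup_{n\geq N} X_n$, bounding $\mu(\limsup_n X_n) \le \mu\big(\bigcup_{n\geq N} X_n\big) \le \sum_{n \ge N}\mu(X_n)$ for every $N$, and letting $N \to \infty$ so that the tail of the convergent series kills the right-hand side. So the genuine content is the first implication, and my plan there is a second moment (Paley--Zygmund) argument.

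First I would reduce to finite sums. Since $\bigcup_{n \ge N} X_n$ decreases to $\limsup_n X_n$ as $N \to \infty$ and $\mu$ is a finite measure, it suffices to show $\mu\big(\bigcup_{n \ge N} X_n\big) \ge \tfrac{1}{4c}$ for every fixed $N$, and for this it suffices, by continuity from below, to bound $\mu\big(\bigcup_{N \le n \le M} X_n\big)$ from below for all large $M$. Set $S_{N,M} = \sum_{n=N}^M \mathbf 1_{X_n}$ and $a_{N,M} = \E[S_{N,M}] = \sum_{n=N}^M \mu(X_n)$; note $\{S_{N,M} > 0\} = \bigcup_{N \le n \le M} X_n$, and that divergence of $\sum_n \mu(X_n)$ forces $a_{N,M} \to \infty$ as $M \to \infty$. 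Applying Cauchy--Schwarz to $\E[S_{N,M}] = \E[S_{N,M}\mathbf 1_{\{S_{N,M}>0\}}]$ yields the familiar bound $\mu(S_{N,M} > 0) \ge a_{N,M}^2 / \E[S_{N,M}^2]$, so everything comes down to controlling the second moment $\E[S_{N,M}^2] = \sum_{m,n=N}^M \mu(X_m \cap X_n)$.

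To estimate that sum I would split the index pairs into the \emph{far} pairs with $|m - n| \ge d$, where the hypothesis~\eqref{pairwise-ind} gives $\mu(X_m \cap X_n) < c\,\mu(X_m)\mu(X_n)$ and hence a total contribution at most $c\,a_{N,M}^2$, and the \emph{near} pairs with $|m - n| < d$, of which there are at most $2d-1$ for each $m$ and for which the crude estimate $\mu(X_m \cap X_n) \le \min(\mu(X_m),\mu(X_n)) \le \tfrac12(\mu(X_m) + \mu(X_n))$ gives a total contribution that is $O(d\,a_{N,M})$. Thus $\mu(S_{N,M} > 0) \ge a_{N,M}^2/\big(c\,a_{N,M}^2 + O(d\,a_{N,M})\big)$, which tends to $1/c \ge 1/(4c)$ as $M \to \infty$ precisely because $a_{N,M} \to \infty$; letting $M \to \infty$ and then $N \to \infty$ completes the argument. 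The only place that needs care is exactly this band of near pairs, on which the almost-independence assumption says nothing: the key observation is that the band is thin — of width $d$, independent of $N$ and $M$ — so its contribution is linear in $a_{N,M}$ and is therefore swamped by the quadratic main term. An equivalent way to sidestep it is to first pass to one of the $d$ arithmetic progressions $\{X_{r+kd}\}_{k \ge 0}$ along which $\sum_k \mu(X_{r+kd})$ still diverges (one such exists by pigeonhole) — on such a subsequence every pair of distinct indices is far — and then run the same computation with no exceptional pairs at all. This route in fact delivers the sharper constant $1/c$; since the statement only asks for $1/(4c)$, either version suffices.
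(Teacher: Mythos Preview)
Your argument is correct and is the standard second-moment (Kochen--Stone / Erd\H{o}s--R\'enyi) proof of this generalized Borel--Cantelli lemma; both the direct treatment of the near-diagonal band and the alternative reduction to an arithmetic progression modulo $d$ are valid, and as you note either route actually yields the stronger bound $1/c$.

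There is nothing to compare against, however: the paper does not prove Lemma~\ref{BC1} at all. It is stated as a known result with a reference to Stroock~\cite{Str}, and the paper's own work begins only with Proposition~\ref{BC2}, which \emph{applies} the lemma. So your proposal is not a different route from the paper's proof --- it simply supplies a proof where the paper gives a citation.
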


\begin{remark}\label{d=1}
The condition in \eqref{pairwise-ind} is called {\em pairwise almost independence}. To simplify the discussion henceforth, we shall require $d=1$ in our definition of pairwise almost independent.
\end{remark}

Now consider a measure space $(\Omega, \B)$ with two $\B$-measures $\ell$ and $\nu$. Suppose that there is a sequence of measurable sets $X_n$ such that the sets $X_n$ are pairwise almost independent for the measure $\ell$, and that for $n$ large enough, the measures satisfy $\ell(X_n) \approx 1/n$ and $\nu(X_n) \leqslant  \exp(-kn)$ for some positive constant $k>0$. Then a direct application of Lemma~\ref{BC1} shows that $\ell(\limsup X_n)>0$ and $\nu(\limsup X_n) = 0$. So the goal is to set up such a sequence of sets in our context.

In the construction in Section~\ref{Singular} however, we do not directly construct a sequence $X_n$ with properties as above. Instead, it turns out natural to construct a doubly indexed sequence of sets $Y^{(m)}_n$ that for different $m$, are pairwise almost independent for the Lebesgue measure $\ell$ i.e. when $m_1 \neq m_2$ they satisfy the inequality ~\eqref{pairwise-ind}. In addition, the sets $Y^{(m)}_n$ have the property that there are positive integers $N, j$ independent of $m$, such that $\ell(Y^{(m)}_n) \approx 1/n^j$ for $n>N$. For the measure $\nu$, there is a constant $k>0$ independent of $m$, such that $\nu(Y^{(m)}_n) \leqslant  \exp(-kn)$ for $n>N$.

Given such a doubly indexed sequence, the proposition below shows how to construct the sequence $X_n$, with the properties described above. The set $\limsup X_n$ is then a set with positive $\ell$ measure and zero $\nu$ measure.

\begin{proposition}\label{BC2}
Let $(\Omega, \B)$ be a probability space with measures $\ell$ and $\nu$. Let $Y^{(m)}_n$ be a doubly indexed sequence of measurable sets such that there exists a positive integer $N$ and constants $j \in \N$, $a \in \R$ with $a > 1$ and $r<1$ such that
\begin{equation}\label{e}
\frac{1}{a n^j} < \ell(Y^{(m)}_n) < \frac{a}{n^j}, \hspace{5mm} \nu(Y^{(m)}_n) < r^n
\end{equation}
for all $m$ and for all $n >N$. Further, suppose that the sets $Y^{(m)}_n$ are pairwise almost independent i.e. for all pairs $(Y^{(m_1)}_{n_1}, Y^{(m_2)}_{n_2})$ of sets with $m_1 \neq m_2$, there exists a constant $c>1$ such that \[
\ell(Y^{(m_1)}_{n_1} \cap Y^{(m_2)}_{n_2}) < c \cdot \ell(Y^{(m_1)}_{n_1}) \ell(Y^{(m_2)}_{n_2})
\]
Then there is a set $X$ such that $\ell(X)>0$ and $\nu(X) = 0$.
\end{proposition}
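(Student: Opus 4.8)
The plan is to distill the doubly indexed family into an ordinary sequence $X_1,X_2,\dots$ that is pairwise almost independent for $\ell$ (with $d=1$), has $\sum_k\ell(X_k)=\infty$ but $\sum_k\nu(X_k)<\infty$, and then to take $X:=\limsup_k X_k$ and quote Lemma~\ref{BC1} on the two measures.

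First I would simply set
\[
X_k := Y^{(k)}_{n(k)}, \qquad n(k) := \big\lceil k^{1/j}\big\rceil ,
\]
discarding the finitely many $k\le N^j$ for which $n(k)$ fails to exceed $N$ (this does not affect $\limsup_k X_k$). The point of taking $n(k)\asymp k^{1/j}$ is to convert the decay rate $n^{-j}$ in \eqref{e} into the harmonic rate $k^{-1}$: from $k\le n(k)^j\le (k^{1/j}+1)^j\le 2^j k$ and \eqref{e} one gets $\frac{1}{2^j a}\cdot\frac 1k<\ell(X_k)<\frac ak$, so $\sum_k\ell(X_k)=\infty$. On the other side \eqref{e} gives $\nu(X_k)<r^{\,n(k)}\le r^{\,k^{1/j}}$, and $\sum_k r^{k^{1/j}}<\infty$ by an integral comparison: the substitution $u=x^{1/j}$ turns $\int_1^\infty r^{x^{1/j}}\,dx$ into $j\int_1^\infty r^{u}u^{j-1}\,du$, which converges because $\log r<0$. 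Hence $\sum_k\nu(X_k)<\infty$.

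For pairwise almost independence: whenever $k_1\ne k_2$ the sets $X_{k_1}=Y^{(k_1)}_{n(k_1)}$ and $X_{k_2}=Y^{(k_2)}_{n(k_2)}$ have distinct superscripts, so the hypothesis of the proposition applies directly and gives $\ell(X_{k_1}\cap X_{k_2})<c\,\ell(X_{k_1})\,\ell(X_{k_2})$; this is \eqref{pairwise-ind} for all $m<n$, i.e. with $d=1$. Lemma~\ref{BC1} then does the rest: the divergence $\sum_k\ell(X_k)=\infty$ together with pairwise almost independence forces $\ell(\limsup_k X_k)\ge\frac{1}{4c}>0$, while the convergence $\sum_k\nu(X_k)<\infty$ forces $\nu(\limsup_k X_k)=0$ (for which no hypothesis on intersections is needed --- essential here, since we are given no independence information about $\nu$). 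Taking $X:=\limsup_k X_k$ completes the proof.

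I do not expect a genuine obstacle; the only things to get right are the bookkeeping exponent $n(k)\asymp k^{1/j}$ and the elementary estimate $\sum_k r^{k^{1/j}}<\infty$. Two caveats worth recording: the resulting $\nu$-masses decay only like $\exp(-c\,k^{1/j})$, not like $\exp(-ck)$ as loosely suggested in the discussion preceding the statement, but summability is all that Lemma~\ref{BC1} uses; and the construction is deliberately wasteful, using a single value of $n$ for each $m$, so it yields only $\ell(X)>0$ --- upgrading to $\ell(X)=1$, as ultimately needed for Theorem~\ref{singular}, is a separate matter (e.g. a $G$-invariance/ergodicity argument) and is not part of this proposition.
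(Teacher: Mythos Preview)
Your argument is correct and in fact cleaner than the paper's. The paper builds each $X_n$ as a \emph{union} of many $Y^{(m)}_n$ with the same subscript $n$ but about $(n+1)^{j-1}$ distinct superscripts $m$, chosen from consecutive blocks so that the $X_n$ for different $n$ involve disjoint ranges of $m$. This forces $\ell(X_n)\approx (n+1)^{j-1}\cdot n^{-j}\approx 1/n$, but then one must re-derive almost independence for the unions via inclusion--exclusion and the given bound on pairwise intersections. Your approach instead keeps each $X_k$ a \emph{single} set $Y^{(k)}_{n(k)}$ and absorbs the exponent $j$ into the reparametrization $n(k)=\lceil k^{1/j}\rceil$; because distinct $k$ automatically have distinct superscripts, the almost-independence hypothesis applies verbatim with no extra work. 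The only cost is that your $\nu(X_k)$ decays like $r^{k^{1/j}}$ rather than exponentially in $k$ as in the paper, but---as you note---Lemma~\ref{BC1} needs only summability, so nothing is lost for the purposes of this proposition.
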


\begin{proof}
Define a sequence of sets $X_n$ as follows: for $n > N$, let $s(n) = \sum_{i=1}^{n} i^{j-1}$ and $t(n) = \sum_{i=1}^{n+1} i^{j-1}$ and define
\[
X_n = \bigcup_{m=s(n)}^{m=t(n)} Y^{(m)}_n
\]
Then for the measure $\ell$ we get
\[
\sum_{m=s(n)}^{m=t(n)} \ell(Y^{(m)}_n) - \sum_{s(n) \leqslant  m_1 < m_2 \leqslant  t(n)} \ell ( Y^{(m_1)}_n \cap Y^{(m_2)}_n) \leqslant  \ell(X_n) \leqslant  \sum_{m=s(n)}^{m=t(n)} \ell(Y^{(m)}_n)
\]
By the property of almost independence and the upper bound in Equation~\eqref{e}
\[
\ell(Y^{(m_1)}_n \cap Y^{(m_2)}_n) < c \cdot \ell( Y^{(m_1)}_n) \ell(Y^{(m_2)}_n) < \frac{ca^2}{n^{2j}}
\]
Hence, by the using the bounds in Equation~\eqref{e}
\[
\sum_{m=s(n)}^{m=t(n)} \frac{1}{an^j} - \sum_{m_1<m_2} \frac{ca^2}{n^{2j}} < \ell(X_n) < \sum_{m=s(n)}^{m=t(n)} \frac{a}{n^j}
\]
which reduces to
\[
\frac{1}{an} - \frac{ca^2}{n^2} < \ell(X_n) \leqslant  \frac{a}{n}
\]
This shows that there is positive integer $N$ and a constant $A> a$ such that for all $n>N$
\[
\frac{1}{An} < \ell(X_n) < \frac{A}{n}
\]
Now we show that the sets $X_n$ are almost independent. For $n_1 < n_2$ notice that
\[
X_{n_1} \cap X_{n_2} \subseteq \bigcup_{\substack{s(n_1) \leqslant  p \leqslant  t(n_1) \\ s(n_2) \leqslant  q \leqslant  t(n_2)}} Y^{(p)}_{n_1} \cap Y^{(q)}_{n_2}
\]
As a consequence, with $s(n_1) \leqslant  p \leqslant  t(n_1)$ and $s(n_2) \leqslant  q \leqslant  t(n_2)$,
\[
\ell(X_{n_1} \cap X_{n_2}) \leqslant  \ell\left(\bigcup_{p,q} Y^{(p)}_{n_1} \cap Y^{(q)}_{n_2}\right) \leqslant  \sum_{p,q} \ell(Y^{(p)}_{n_1} \cap Y^{(q)}_{n_2})
\]
Using pairwise almost independence of the $Y$'s, we get
\[
\ell(X_{n_1} \cap X_{n_2}) < \sum_{p,q} c \cdot \ell(Y^{(p)}_{n_1})\ell(Y^{(q)}_{n_2}) < \sum_{p,q} \frac{ca^2}{n_1^j n_2^j} = \frac{ca^2}{n_1n_2} < \frac{ca^2}{A^2} \ell(X_{n_1})\ell(X_{n_2})
\]
showing pairwise almost independence of the sequence $X_n$ with respect to the measure $\ell$.

For the measure $\nu$ note that
\[
\nu(X_n) \leqslant  \sum_{m=s(n)}^{t(n)} \nu(Y^{(m)}_n) < \sum_{m=s(n)}^{t(n)} r^n = n^{j-1} r^n
\]
The above inequality implies that for $N$ large enough, there is a constant $\rho <1$ such that for all $n>N$, we have $\nu(X_n) < \rho^n$. Finally by an application of Lemma~\ref{BC1}, we have $\ell(\limsup X_n)>0$ and $\nu(\limsup X_n)=0$.
\end{proof}

\section{The $PSL(2,\Z)$ example} \label{Torus}

We shall first explain the construction of the singular set in the special case when the group is $PSL(2,\Z)$. The group $PSL(2,\Z)$ is the mapping class group of the torus. The \teichmuller space of the torus is $\H^2$ and the group $PSL(2,\Z)$ acts on it by fractional linear transformations. The projective class of a measure foliation on the torus is determined by its slope, and can be marked as a point on the boundary $\partial \H^2 = \R \cup \{\infty\}$. The simple curves on the torus are the rational points. The Farey graph is constructed with the rational points as the vertex set with an edge between two rational points if the simple curves representing those points can be isotoped to intersect minimally i.e. in a single point. The group $PSL(2,\Z)$ acts on the Farey graph and it follows from the action that it is quasi-isometric to the trivalent tree dual to Farey graph. A part of the Farey graph and its dual tree are shown in Figure~\ref{dual}.

\begin{figure}[htb]
\begin{center}
\ \psfig{file=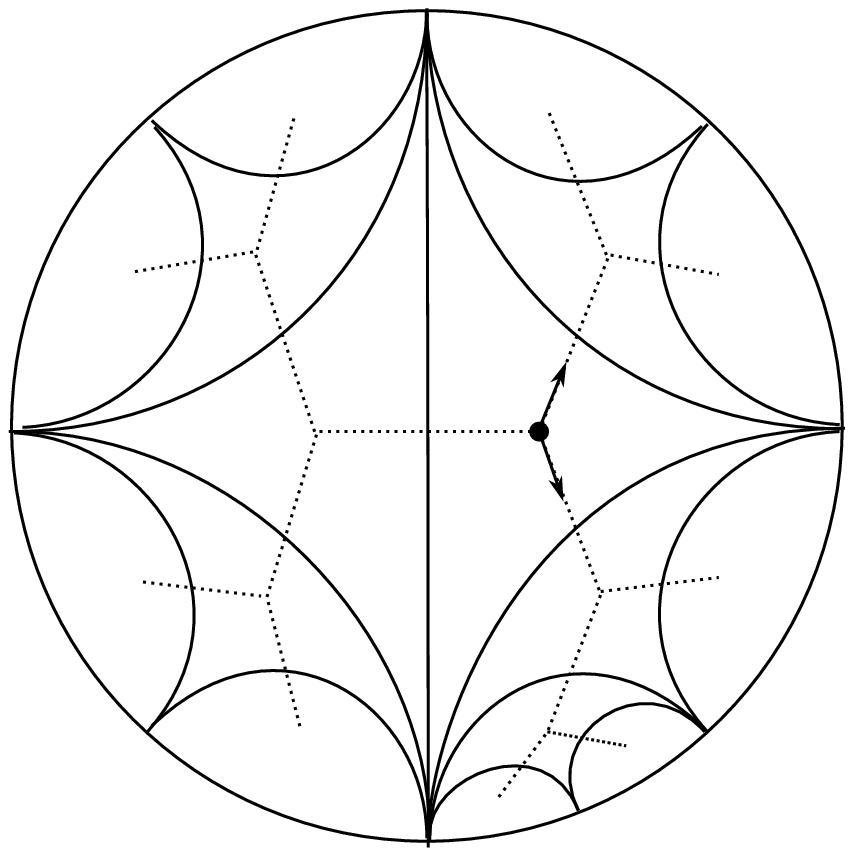, height=2truein, width=2truein} \caption{Farey graph and the dual tree} \label{dual}
\end{center}
\setlength{\unitlength}{1in}
\begin{picture}(0,0)(0,0)
\put(0,0.4){0}\put(1.1,1.55){1}\put(0,2.7){$\infty$}\put(-1.15,1.55){-1}\put(0.4,1.5){$\bb$}
\end{picture}
\end{figure}

To illustrate the main point of the construction, first restrict to the nearest neighbor non-backtracking random walk on the trivalent tree starting from the base vertex $\bb$ as shown in Figure~\ref{dual}. Moving forward in the tree from the base vertex, we can choose to move either ``right'' or ``left'' as shown by the arrows in the figure. The $PSL(2,\Z)$ generators that move the base vertex forward by right and left can be taken to be
\begin{equation}\label{PSL2Z}
R = \left[ \begin{array}{cc} 1 & 0\\ 1 & 1 \end{array} \right] \hskip 10pt , \hskip 10pt L = \left[ \begin{array}{cc} 1 & 1 \\ 0 & 1 \end{array} \right]
\end{equation}
respectively. Now consider the interval $[0,1]$ in $\partial \H = \R \cup \{ \infty \}$. Every irrational number $\x$ in $[0,1]$ corresponds to a unique non-backtracking sample path in the tree. This sample path can be written down as an infinite word $R^{a_1} L^{a_2} \cdots$, where all $a_n$ are positive integers. In fact, it is easy to see that the numbers $a_n$ are the coefficients in the continued fraction expansion of $\x$. The set of points in $[0,1]$ whose expansion begins with $R^n$ is $[0,1/n)$, whose Lebesgue measure is $1/n$. On the other hand, from the point of view of the nearest neighbor non-backtracking random walk, we are choosing $R$ from the possible choices $\{R, L\}$, $n$ times. So the set of infinite sample paths that begin with $R^n$ have probability $1/2^n$. This indicates that near parabolic fixed points the Lebesgue and the harmonic measure scale differently under a repeated application of that parabolic element. Later, we exploit this discrepancy to show that the measures are mutually singular.

To give the explicit construction of the singular set restricted to $[0,1]$, we shall first interpret the continued fraction expansion of $\x$ as a splitting sequence of a classical interval exchange on 2 bands.

\subsection{Classical interval exchanges:}
In a classical interval exchange map, an interval $I$ is partitioned into $d$ subintervals, these subintervals are permuted and then glued back preserving their orientation, to get $I$. The result is a Lebesgue measure preserving map from $I$ to itself. The data that completely determines a classical interval exchange map is: first, the lengths of the subintervals and second, the permutation used for gluing. To work with projective classes of foliations, we normalize the length of the base interval $I$ to be 1.

\begin{figure}[htb]
\begin{center}
\ \psfig{file=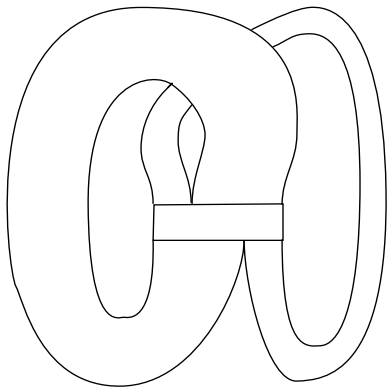, height=2truein, width=2truein} \caption{Classical Interval Exchange} \label{ciem}
\end{center}
\setlength{\unitlength}{1in}
\begin{picture}(0,0)(0,0)
\end{picture}
\end{figure}

A classical interval exchange map can be represented pictorially as follows: In the plane, draw the interval $I = [0,1]$ along the horizontal axis and then thicken it slightly in the vertical direction to get two copies, $I \cross (\delta)$ and $I \cross (-\delta)$. Call them top interval and bottom interval respectively. Subdivide the bottom interval into $d$ sub-intervals with lengths $\lambda_1, \lambda_2, \dotsc, \lambda_d$ from left to right. Subdivide the top interval into $d$ sub-intervals with lengths $\lambda_{\pi^{-1}(1)}, \lambda_{\pi^{-1}(2)}, \dotsc, \lambda_{\pi^{-1}(d)}$ from left to right. Now join each subinterval on the bottom to the corresponding subinterval on the top by a band of uniform width $\lambda_\alpha$. To determine the image of a point $t \in I$ under the interval exchange map, pick the subinterval on the bottom in which $t$ lies and flow $t$ along the band to the top. The only ambiguity in the definition occurs at the common endpoints of adjacent subintervals. This is removed by requiring the endpoint flow along the band that lies to the right.

Here, we shall work with {\em labeled} interval exchanges. A labeling is a bijection from the set $S = \{ 1, \cdots, d\}$ to the set of bands. Interval exchanges with the same exchange combinatorics but different labeling shall be regarded as different.

\subsubsection{Rauzy induction:}
We shall now describe an induction process on the space of interval exchanges called Rauzy induction. Call the positions on the top and bottom that are rightmost on the intervals the {\em critical} positions. Let $\alpha_0$ and $\alpha_1$ be the labels of the bands in the critical positions with $\alpha_1$ on the top. First, suppose that $\lambda_{\alpha_1} > \lambda_{\alpha_0}$. Then we slice as shown in Figure~\ref{rauzyinduct} till we hit the original interval for the first time.

\begin{figure}[htb]
\begin{center}
\ \psfig{file=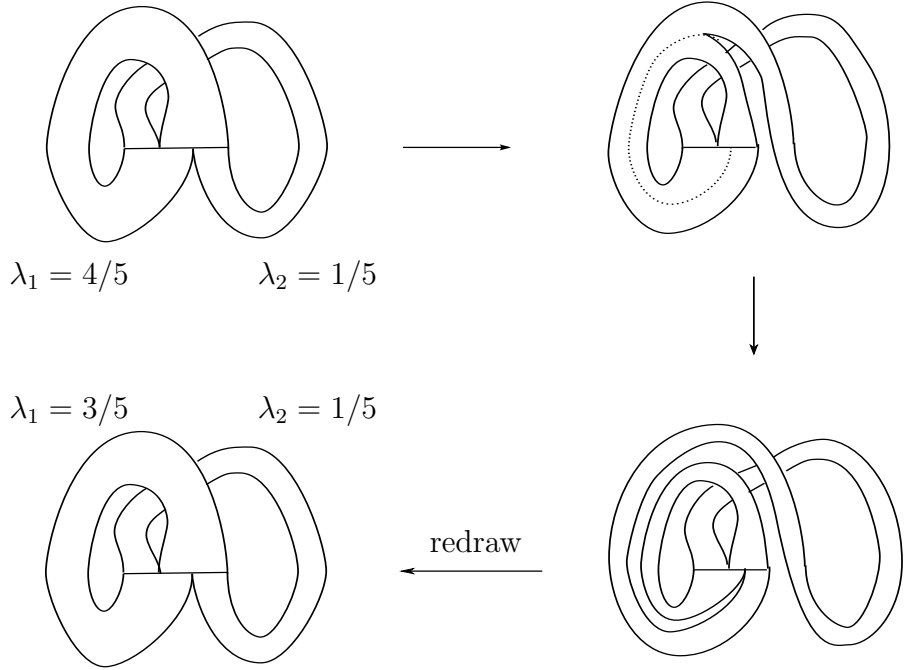, height=3.5truein, width=4.5truein} \caption{Rauzy Induction} \label{rauzyinduct}
\end{center}
\setlength{\unitlength}{1in}
\begin{picture}(0,0)(0,0)
\put(-2.4,2.6){$\lambda_1 = 4/5$} \put(-1.1,2.6){$\lambda_2 = 1/5$}\put(-0.2,1.2){redraw}\put(-2.4,1.9){$\lambda_1 = 3/5$}\put(-1.1,1.9){$\lambda_2 = 1/5$}
\end{picture}
\end{figure}

The $\alpha_1$ band remains in its critical position, but typically a band with a different label $\alpha'_0$ moves into the other critical position. Furthermore, the new width of $\alpha_1$ is $ \lambda_{\alpha_1} - \lambda_{\alpha_0}$. All other widths remain unchanged. If $\lambda_{\alpha_1} < \lambda_{\alpha_0}$ instead, we slice in the opposite direction, which in Figure~\ref{rauzyinduct} would be the analogous operation after flipping the picture about the horizontal axis. In either case, we get a new interval exchange with combinatorics and widths as described above. The operation we just described is called {\em Rauzy} induction. It is the same as the first return map under $T$ to the interval $I' = [0,\sum_{\alpha \neq \alpha_0} \lambda_{\alpha})$ in the first instance and $I' = [0, \sum_{\alpha \neq \alpha_1} \lambda_{\alpha})$ in the second. The interval exchange we get by the induction carries an {\em induced} labeling. Since Rauzy induction is represented pictorially by one band being split by another, it is also called a {\em split}. Iterations of Rauzy induction are called {\em splitting sequences}.

\subsubsection{Rauzy diagram:}
For classical interval exchanges with $d$ bands, construct an oriented graph $\overline{\cG}$ as follows: the nodes of the graph are combinatorial types $\phi$ of labeled classical interval exchanges with $d$ bands. The initial labeling induces a labeling of the interval exchanges obtained by Rauzy induction. So we draw an arrow from $\phi$ to $\phi'$, if $\phi'$ is a labeled combinatorial type resulting from splitting $\phi$. For each node $\phi$, there are exactly two arrows coming out of it. A splitting sequence gives us a directed path in $\overline{\cG}$. A finite splitting sequence starting from $\phi$ shall be called a {\em stage} in the expansion starting from $\phi$.

\subsubsection{Encoding Rauzy induction on the parameter space:}
A choice of labeling gives a bijection between the set of bands and the standard basis of $\R^d$. Let $\{e_\alpha \}$ denote the standard basis under the bijection. We get a map from the set of interval exchanges sharing the same combinatorics, into $(\R_{\geqslant 0})^d \setminus \{ 0 \}$ by thinking of $(\lambda_\alpha)$ as co-ordinates for $e_\alpha$. Because of the normalization $\sum \lambda_\alpha =1$, the image is the standard $(d-1)$-simplex $\Delta$.

Each instance of the Rauzy induction can be encoded as a non-negative matrix as follows: Let $M_{\alpha \beta}$ be the $d \times d$-matrix with the $(\alpha,\beta)$ entry 1 and all other entries 0. If the labels of the bands in the critical positions are $\alpha_0$ and $\alpha_1$, then the relationship between the old and new width data can be expressed by
\begin{equation}\label{width-rel}
\lambda = E \lambda'
\end{equation}
where the matrix $E$ has the form $E = I + M$. In the first instance of the split, when $\lambda_{\alpha_1} > \lambda_{\alpha_0}$, the matrix $M = M_{\alpha_1\alpha_0}$; in the second instance of the split, when $\lambda_{\alpha_0} > \lambda_{\alpha_1}$, the matrix $M = M_{\alpha_0 \alpha_1}$. Thus, in either case the matrix $E$ is an elementary matrix, in particular $E \in SL(d; \Z)$.

Proceeding iteratively, we associate a matrix in $SL(d, \Z)$ to any finite splitting sequence by requiring that the matrix $Q$ at any stage in the sequence is obtained by multiplying the matrix $Q'$ for the preceding stage on the right by the elementary matrix $E$ associated to that particular split i.e. $Q = Q'E$. In this way, starting with an interval exchange determined by the width data $(\lambda_\alpha)$, we get an expansion by repeated splitting.

The Rauzy induction is undefined when $\lambda_{\alpha_0} = \lambda_{\alpha_1}$. However, for any labeling and combinatorial type, this always gives a co-dimension 1 subset of $\Delta$ and hence has measure zero. The set of widths whose Rauzy induction stops in finite number of steps is a countable union of such sets, and hence of measure zero. Thus for almost every width data, one can associate an infinite expansion.

Fix an initial node $\phi_0$ and use the labeling to identify the space of interval exchanges at $\phi_0$ by their widths with $\Delta$. Expansions of the points $(\lambda_\alpha)$ in $\Delta$ by repeated Rauzy induction correspond to directed paths in $\overline{\cG}$ starting from $\phi_0$.

Given a matrix $Q$ with non-negative entries, we define the projectivization $JQ$ as a map from $\Delta$ to itself by
\[
JQ(\y) = \frac{Q \y}{\vert Q \y \vert}
\]
where if $\y = (y_1, y_2, \cdots, y_d)$ in co-ordinates then $\vert \y \vert = \sum \vert y_i \vert$.

The subset of widths in $\Delta$ whose expansion begins with some finite splitting sequence $\jmath$ is given by $JQ_\jmath(\Delta)$. To get estimates of relative probabilities of particular splitting sequences after $\jmath$ it becomes essential to consider the Jacobian $\J_\Delta (JQ_\jmath)$ of the projective linear map $JQ_\jmath$ from $\Delta$ to itself. It is known that \cite{Buf}
\[
\J_\Delta(JQ_\jmath)(\y) = \frac{1}{|Q_\jmath \y|^d}
\]
For a matrix $Q_\jmath$ of a stage $\jmath$, let $Q_\jmath(\alpha)$ be the column of $Q_\jmath$ corresponding to the band $\alpha$.
\begin{definition}
For a constant $C>1$, a stage $\jmath$ in the expansion is said $C$-distributed if the matrix $Q_\jmath$ of the stage has the property that for any pair of columns $Q_\jmath(\alpha)$ and $Q_\jmath(\beta)$
\[
\frac{1}{C} < \frac{\vert Q_\jmath(\alpha) \vert}{\vert Q_\jmath(\beta) \vert} < C
\]
\end{definition}

\noindent Suppose the splitting sequences $\jmath$ and $\kappa$, thought of as directed paths in $\overline{\cG}$, can be concatenated i.e. the sequence $\jmath$ can be followed by $\kappa$. We denote by $\jmath \ast \kappa$ the splitting sequence given by the concatenation. The main point of $C$-distribution is
\begin{lemma}\label{control}
If a stage $\jmath$ is $C$-distributed, then there exists a constant $c>1$ that depends only on $C$ and $d$, such that the relative probability that any sequence $\kappa$ follows $\jmath$ satisfies
\begin{equation}
\frac{1}{c} \ell(JQ_\kappa(\Delta)) < \frac{\ell(JQ_{\jmath \ast \kappa}(\Delta))}{\ell(JQ_\jmath(\Delta))} < c \cdot \ell(JQ_\kappa(\Delta))
\end{equation}
\end{lemma}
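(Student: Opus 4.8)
The plan is to reduce the statement to the change of variables formula for the projective maps $JQ$, together with the observation that $C$-distribution makes the relevant Jacobian almost constant on $\Delta$. Concatenation of splitting sequences corresponds to multiplication of the associated matrices, so $Q_{\jmath \ast \kappa} = Q_\jmath Q_\kappa$ and hence $JQ_{\jmath \ast \kappa} = JQ_\jmath \circ JQ_\kappa$ as maps $\Delta \to \Delta$; in particular $JQ_{\jmath \ast \kappa}(\Delta) = JQ_\jmath\bigl(JQ_\kappa(\Delta)\bigr)$. Applying the change of variables formula with Jacobian $\J_\Delta(JQ_\jmath)(\y) = |Q_\jmath \y|^{-d}$ to the measurable sets $\Delta$ and $JQ_\kappa(\Delta)$ gives
\[
\ell(JQ_\jmath(\Delta)) = \int_\Delta \frac{d\ell(\y)}{|Q_\jmath \y|^d}, \qquad \ell(JQ_{\jmath \ast \kappa}(\Delta)) = \int_{JQ_\kappa(\Delta)} \frac{d\ell(\y)}{|Q_\jmath \y|^d},
\]
so that the ratio in the statement is, up to the fixed factor $\ell(\Delta)$, the average of $|Q_\jmath \y|^{-d}$ over $JQ_\kappa(\Delta)$ divided by its average over $\Delta$.

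Next I would show that the integrand $\y \mapsto |Q_\jmath \y|^{-d}$ is pinched between two constants with ratio controlled by $C$ and $d$. For $\y = (y_\alpha) \in \Delta$ one has $y_\alpha \geq 0$ and $\sum_\alpha y_\alpha = 1$, and since $Q_\jmath$ has non-negative entries,
\[
|Q_\jmath \y| = \sum_\alpha y_\alpha\, |Q_\jmath(\alpha)|,
\]
a convex combination of the column sums $|Q_\jmath(\alpha)|$. Hence $\min_\alpha |Q_\jmath(\alpha)| \leq |Q_\jmath \y| \leq \max_\alpha |Q_\jmath(\alpha)|$ for every $\y \in \Delta$, and $C$-distribution of $\jmath$ forces $\max_\alpha |Q_\jmath(\alpha)| < C \min_\alpha |Q_\jmath(\alpha)|$. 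Therefore $|Q_\jmath \y| / |Q_\jmath \y'| \in (1/C, C)$ for all $\y, \y' \in \Delta$, and consequently $|Q_\jmath \y|^{-d}$ varies over $\Delta$ by a multiplicative factor at most $C^d$.

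Combining the two steps finishes the argument. Writing $M$ and $m$ for the supremum and infimum of $|Q_\jmath \y|^{-d}$ over $\Delta$, so that $M/m \leq C^d$, the numerator integral is at most $M\,\ell(JQ_\kappa(\Delta))$ and the denominator integral is at least $m\,\ell(\Delta)$, which yields
\[
\frac{\ell(JQ_{\jmath \ast \kappa}(\Delta))}{\ell(JQ_\jmath(\Delta))} \leq \frac{C^d}{\ell(\Delta)}\, \ell(JQ_\kappa(\Delta)),
\]
and the symmetric estimate gives the matching lower bound $\tfrac{1}{C^d \ell(\Delta)}\,\ell(JQ_\kappa(\Delta))$. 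Since $\ell(\Delta)$ is a constant depending only on $d$, taking $c = C^d \max\bigl(\ell(\Delta), \ell(\Delta)^{-1}\bigr)$ (or simply $c = C^d$ if $\ell$ is normalized so that $\ell(\Delta) = 1$) gives a constant depending only on $C$ and $d$, as required.

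There is no serious obstacle here; the points demanding care are getting the composition direction right — it is $JQ_{\jmath \ast \kappa} = JQ_\jmath \circ JQ_\kappa$, so the change of variables must be carried out over the subset $JQ_\kappa(\Delta)$ rather than over all of $\Delta$ — and the elementary but essential identity $|Q_\jmath \y| = \sum_\alpha y_\alpha |Q_\jmath(\alpha)|$, which is exactly what converts the $C$-distribution hypothesis into bounded oscillation of the Jacobian.
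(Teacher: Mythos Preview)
Your proof is correct and follows essentially the same approach as the paper: both express the ratio as a quotient of integrals of the Jacobian $\J_\Delta(JQ_\jmath)(\y)=|Q_\jmath\y|^{-d}$ over $JQ_\kappa(\Delta)$ and over $\Delta$, and then use $C$-distribution to bound the oscillation of that Jacobian. Your version is in fact more explicit than the paper's---you supply the convex-combination identity $|Q_\jmath\y|=\sum_\alpha y_\alpha|Q_\jmath(\alpha)|$ and track the constant as $C^d$, whereas the paper's proof states the Jacobian ratio lies in $(1/C,C)$, which strictly speaking should be $(1/C^d,C^d)$.
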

In other words, Lemma~\ref{control} says that if a stage $\jmath$ is $C$-distributed, the relative probability of a splitting sequence $\kappa$ following $\jmath$ is up to a constant that depends on $C$ and $d$ alone, the same as the probability that an expansion begins with $\kappa$. See \cite{Gad}

\begin{proof}
Let $Y_\jmath = JQ_\jmath(\Delta), Y_\kappa = JQ_\kappa(\Delta)$ and $Y_{\jmath \ast \kappa}= JQ_{\jmath \ast \kappa}(\Delta)$. Then
\begin{equation*}
\frac{\ell(Y_{\jmath \ast \kappa})}{\ell(Y_\jmath)} = \frac{\int_{Y_\kappa} \J_\Delta(JQ_\jmath)(\x) d \ell(\x)}{\int_{\Delta}\J_\Delta(JQ_\jmath)(\x) d \ell(\x)}
\end{equation*}
Because of the $C$-distribution of $\jmath$, the Jacobian $\J_\Delta(JQ_\jmath)$ at any point inside $Y_\kappa$ differs from the Jacobian $\J_\Delta(JQ_\jmath)$ at any point in $\Delta \setminus Y_\kappa$ by a factor that lies in $(1/C,C)$. This finishes the proof.
\end{proof}

\subsection{Classical interval exchange with 2 bands:}
Since there is exactly a single permutation over two letters, there is only two combinatorial types for a classical interval exchange with 2 bands.
\begin{figure}[htb]
\begin{center}
\ \psfig{file=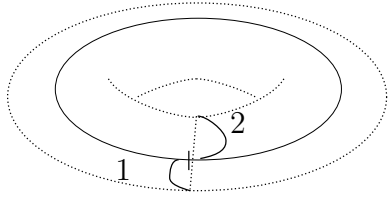, height=1truein, width=2truein} \caption{Interval exchange on the torus} \label{torustrack}
\end{center}
\setlength{\unitlength}{1in}
\begin{picture}(0,0)(0,0)
\put(-0.4,0.65){1} \put(0.2,0.9){2}
\end{picture}
\end{figure}

One of the ways to embed this interval exchange on the torus is shown in Figure~\ref{torustrack}. The transverse arc represents the base interval. For this embedding, the widths $(\lambda_1, \lambda_2)$ of the bands uniquely determines the foliation with slope $\lambda_2/\lambda_1$ in the standard homology basis of the torus. Thus the set $[0,1] \subset \R \cup \infty$ that we are interested in, is equivalent to $\lambda_2 < \lambda_1$ initially.

Now we consider expansions by repeated Rauzy inductions. At any stage, there are exactly two types of splits that are possible. When band 2 splits band 1 we denote it by $R$, and if vice versa we denote it by $L$. Since initially $\lambda_2 < \lambda_1$, all our expansions begin with $R$. The elementary matrices associated to the splits are exactly the same as in Equation~\eqref{PSL2Z}. Thus, the two moves possible on the dual tree correspond to the two possible splits of our classical interval exchange.

It is easy to check that the expansion stops in finite time if and only if the slope is rational. Thus an irrational slope gives an infinite splitting sequence. If we write down the infinite sequence as an infinite word $R^{a_1} L^{a_2} \cdots$, for positive integers $a_n$, then it is easy to check that the $a_n$ are exactly the coefficients in the continued fraction expansion of the slope. Thus we have encoded the infinite sample paths $R^{a_1} L^{a_2} \cdots$ for the nearest neighbor non-backtracking random walk on the trivalent tree as splitting sequences of the interval exchange.

The main point is
\begin{proposition}\label{eq-dist-torus}
In every infinite expansion $R^{a_1} L^{a_2} \cdots$, every stage at which one switches between the letters $R$ and $L$ or vice versa, is a $2$-distributed stage i.e. every stage of the form $R^{a_1} \cdots R^{a_{2k-1}} L$ or $R^{a_1} \cdots L^{a_{2k}} R$ is 2-distributed. In particular, almost every expansion becomes $2$-distributed infinitely often.
\end{proposition}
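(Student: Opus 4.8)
The plan is to reduce $C$-distribution for $d=2$ to a statement about the two column sums of $Q_\jmath$ and then track how these evolve along the expansion. For $d=2$ a stage $\jmath$ is $C$-distributed exactly when the ratio $|Q_\jmath(1)|/|Q_\jmath(2)|$ of the two column sums lies in $(1/C, C)$. Writing $u = (1,1)$ for the all-ones row vector, the vector of column sums of a non-negative matrix $Q$ is $uQ$, so I will study the row vectors $v_n := uQ_n$, where $Q_n$ is the matrix of the $n$-th stage of an expansion $R^{a_1}L^{a_2}\cdots$. A direct computation from \eqref{PSL2Z} gives the transition rules $vR = (v_1 + v_2,\, v_2)$ and $vL = (v_1,\, v_1 + v_2)$ for $v = (v_1, v_2)$.

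First I would introduce the two open cones of positive row vectors $W_R = \{(x,y) : x > y > 0\}$ and $W_L = \{(x,y) : y > x > 0\}$, and prove by induction on the number of blocks of the word $R^{a_1}L^{a_2}\cdots$ that $v_n \in W_R$ whenever stage $n$ is the end of a complete block of $R$'s, and $v_n \in W_L$ whenever stage $n$ is the end of a complete block of $L$'s. The base case is $v_0 = (1,1)$; since every expansion begins with $R$, one $R$ sends $(1,1)\mapsto(2,1)\in W_R$, and since $vR = (v_1+v_2,\,v_2)$ is in $W_R$ whenever $v$ has positive entries, further $R$'s keep us in $W_R$. For the inductive step, if a complete $R$-block leaves $v = (x,y)\in W_R$, the first $L$ of the next block sends it to $(x,\,x+y)\in W_L$ (using $x,y>0$), and further $L$'s keep us in $W_L$; the symmetric argument handles an $R$-block following an $L$-block.

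The two families of switch stages are then handled by direct evaluation. A stage $R^{a_1}\cdots R^{a_{2k-1}}L$ has matrix $Q = Q'L$ with $Q'$ the end of an $R$-block, so $uQ' = (x,y)$ with $x>y>0$; hence $uQ = (x,\,x+y)$ and $|Q(1)|/|Q(2)| = x/(x+y) \in (1/2,\,1)$. A stage $R^{a_1}\cdots L^{a_{2k}}R$ has matrix $Q = Q''R$ with $uQ'' = (x,y)$ and $y>x>0$; hence $uQ = (x+y,\,y)$ and $|Q(1)|/|Q(2)| = (x+y)/y$, which is $>1$ and, since $x<y$, also $<2$. In both cases the ratio lies in $(1/2,\,2)$, so the stage is $2$-distributed. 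The final assertion follows because every irrational slope has infinitely many continued-fraction partial quotients, hence its word $R^{a_1}L^{a_2}\cdots$ contains infinitely many switches, while the rational slopes (the finite expansions) form a Lebesgue-null subset of $[0,1]$.

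The mathematics here is elementary, so the only real care needed is bookkeeping: pinning down that a ``switch'' stage means exactly one letter of the new type has been applied (not the whole new block), and observing that the bound $<2$ is not a formal consequence of positivity but genuinely uses the strict inequality defining $W_R$ or $W_L$. The expansion $RLRL\cdots$, where every stage past the first is a switch and the column-sum ratio tends to the golden ratio $\approx 1.618$, is a good check that the constant $2$ is essentially sharp.
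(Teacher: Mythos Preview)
Your argument is correct: tracking the row vector of column sums $v_n = uQ_n$ under the transitions $vR = (v_1+v_2,v_2)$ and $vL = (v_1,v_1+v_2)$, and showing inductively that the end of each complete block lies in the appropriate open cone $W_R$ or $W_L$, cleanly yields the bound $1/2 < |Q(1)|/|Q(2)| < 2$ at every switch. The paper itself leaves this proposition to the reader, so there is no proof to compare against; your write-up is exactly the kind of elementary verification intended.
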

\noindent The proof of the proposition is left to the reader.

\subsection{The singular set for $PSL(2,\Z)$:}

Let $m$ be an odd integer. Consider the sets
\[
X_{m,n} = \{ \x \in (0,1): a_m(\x) > n \}
\]
By direct computation, the Lebesgue measure of $X_{1,n}$ is
\[
\ell(X_{1,n}) = \frac{1}{n+1}
\]
By Proposition~\ref{eq-dist-torus}, stages of the form $R^{a_1} \cdots L^{a_{2k}} R$ are $2$-distributed. The expansions in which a stage of this form does not occur, terminate in finite time, and hence are a subset of the rational numbers. This means that a stage of the form $R^{a_1} \cdots L^{a_{2k}} R$ occurs in almost every expansion. By Lemma~\ref{control}, the relative probability of such a stage being followed by the sequence $R^n$, is up to a universal constant the same as the probability that an expansion begins with $R^n$. Thus, for odd $m$,
\begin{equation}\label{rel-prob}
\ell(X_{m,n}) \approx \ell(X_{1,n-1})
\end{equation}
Consider the set
\[
X = \limsup_{n \text{ odd}} X_{n,n} = \bigcap_{k=1}^{\infty} \hskip 2pt \bigcup_{n \geqslant 2k-1} X_{n,n}
\]
that is, $X$ is the set of elements $\x \in (0,1)$ such that $\x \in X_{n,n}$ for infinitely many odd $n \in \N$. The restriction that $n$ be odd is not essential. To keep the explanations simple, we want the 2-distributed stages considered, to have the form $R^{a_1}L^{a_2} \cdots L^{a_{m-1}} R$, and not $R^{a_1} \cdots R^{a_{2k-1}} L$.

Let $\nu$ be the harmonic measure on $(0,1)$ for the nearest neighbor non-backtracking random walk on the trivalent tree. Since there is no backtracking involved, every infinite sample path in the trivalent tree is convergent. Moreover, almost every infinite sample path necessarily passes through some vertex of the form $R^{a_1}L^{a_2} \cdots L^{a_{n-1}} R (\bb)$ exactly once. So to compute the proportion of sample paths that converge into $X_{n,n}$ we can condition on the set of vertices represented by the initial sample paths $R^{a_1}L^{a_2} \cdots L^{a_{n-1}} R$. From each such vertex, the probability of converging into $X_{n,n}$ is exactly $1/2^n$. Hence $\nu(X_{n,n}) = 1/2^n$
\begin{proposition}\label{psl2z-set}
\[
\ell(X) > 0  \hskip 8pt , \hskip 8pt \nu(X) = 0
\]
\end{proposition}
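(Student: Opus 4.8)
The plan is to apply the Borel--Cantelli Lemma~\ref{BC1} directly to the single-indexed family $\{X_{n,n}\}_{n\text{ odd}}$ (reindexed as $X_k=X_{2k-1,2k-1}$); what is needed is that $\sum_k\ell(X_k)=\infty$ while $\sum_k\nu(X_k)<\infty$, and that the $X_k$ are pairwise almost independent for $\ell$ with a constant independent of the indices. The $\nu$-side is immediate: since $\nu(X_{n,n})=1/2^n$ we have $\sum_k\nu(X_k)<\infty$, so the unconditional half of Lemma~\ref{BC1} gives $\nu(X)=\nu(\limsup_{n\text{ odd}}X_{n,n})=0$. For the $\ell$-side, combining $\ell(X_{1,n})=1/(n+1)$ with the relative-probability estimate \eqref{rel-prob} and the universal constant $c=c(2,2)$ from Lemma~\ref{control} yields $\tfrac{1}{cn}<\ell(X_{n,n})<\tfrac{c}{n}$ for all large odd $n$, hence $\sum_k\ell(X_k)=\infty$.

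The core of the argument is the pairwise almost independence. Fix odd $n_1<n_2$. For odd $m$, write $X_{m,m}$, up to an $\ell$-null set, as the disjoint union over stages $\sigma=R^{a_1}L^{a_2}\cdots L^{a_{m-1}}R^{a_m}L$ with $a_i\ge1$ and $a_m\ge m+1$ of the cylinders $JQ_\sigma(\Delta)$; by Proposition~\ref{eq-dist-torus} each such $\sigma$, being a switching stage, is $2$-distributed. A stage $\sigma$ of this form for $m=n_2$ meets $X_{n_1,n_1}$ precisely when the prefix $\sigma_1$ obtained from it by truncating just after the first letter of block $n_1+1$ is itself one of the stages occurring in the decomposition of $X_{n_1,n_1}$ (equivalently, $a_{n_1}\ge n_1+1$), in which case $\sigma=\sigma_1\ast\kappa$ for a continuation $\kappa$ from the node reached after $\sigma_1$ forcing $a_{n_2}>n_2$. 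Summing over such $\kappa$ and invoking Lemma~\ref{control} on the $2$-distributed stage $\sigma_1$,
\[
\ell\bigl(JQ_{\sigma_1}(\Delta)\cap X_{n_2,n_2}\bigr)=\sum_\kappa\ell\bigl(JQ_{\sigma_1\ast\kappa}(\Delta)\bigr)<c\,\ell\bigl(JQ_{\sigma_1}(\Delta)\bigr)\sum_\kappa\ell\bigl(JQ_\kappa(\Delta)\bigr)=c\,P\,\ell\bigl(JQ_{\sigma_1}(\Delta)\bigr),
\]
where $P=\sum_\kappa\ell(JQ_\kappa(\Delta))$ is the Lebesgue measure --- at the node reached after $\sigma_1$, which since $d=2$ is a single fixed node independent of $\sigma_1$ --- of the set of expansions whose $(n_2-n_1)$-th partial quotient exceeds $n_2$. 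Because $n_2-n_1\ge2$, the same reasoning as in the first paragraph (a $2$-distributed switching stage just before that block, together with Lemma~\ref{control}) gives $P<c'/n_2$ for a universal $c'$, and hence $P<C''\,\ell(X_{n_2,n_2})$. Summing over the stages $\sigma_1$ in the decomposition of $X_{n_1,n_1}$,
\[
\ell(X_{n_1,n_1}\cap X_{n_2,n_2})\le\sum_{\sigma_1}\ell\bigl(JQ_{\sigma_1}(\Delta)\cap X_{n_2,n_2}\bigr)<c\,P\sum_{\sigma_1}\ell\bigl(JQ_{\sigma_1}(\Delta)\bigr)=c\,P\,\ell(X_{n_1,n_1})<C\,\ell(X_{n_1,n_1})\,\ell(X_{n_2,n_2}),
\]
with $C$ depending only on $d=2$ and the distortion constant $2$, which is the required pairwise almost independence.

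Feeding these three facts into Lemma~\ref{BC1} gives $\ell(X)=\ell(\limsup_{n\text{ odd}}X_{n,n})\ge\tfrac{1}{4C}>0$ together with $\nu(X)=0$, which is the proposition. The genuinely delicate point is the bookkeeping in the middle paragraph: identifying, for each stage in the decomposition of $X_{n_2,n_2}$, its prefix among the stages of $X_{n_1,n_1}$; correctly recognizing the shifted partial quotient (the parity shift from block $n_1$ to block $n_1+1$ does no harm precisely because $n_2-n_1\ge2$) whose cylinder computes $P$; and --- the point that makes the whole scheme work --- checking that every constant that appears ($c$, $c'$, $C''$, $C$) depends only on the fixed data $d=2$ and the $2$-distribution bound, and never on $n_1$ or $n_2$, so that Lemma~\ref{BC1} may be applied with a single constant.
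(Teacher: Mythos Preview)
Your proof is correct and uses essentially the same approach as the paper: both establish pairwise almost independence of the $X_{n,n}$ by partitioning into $2$-distributed switching stages and invoking Lemma~\ref{control}, then feed this into Lemma~\ref{BC1}. The only organizational difference is that the paper partitions all of $(0,1)$ by the cylinders $W(S)=R^{a_1}\cdots L^{a_{m-1}}R$ (ending after the first letter of block $m$) and performs a two-step reduction to the auxiliary inequality $\ell(X_{1,m}\cap X_{n,k})<c\,\ell(X_{1,m})\ell(X_{n,k})$, whereas you partition $X_{n_1,n_1}$ itself by the cylinders $\sigma_1=R^{a_1}\cdots R^{a_{n_1}}L$ (ending after the first letter of block $n_1+1$) and bound the intersection with $X_{n_2,n_2}$ directly; your route is marginally more streamlined but relies on exactly the same ingredients.
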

\begin{proof}
By Equation~\eqref{rel-prob}, $\ell(X_{n,n}) \approx \ell(X_{1,n-1}) = 1/n$. The sets $X_{n,n}$ are not independent for the Lebesgue measure, but as we shall see in Claim~\ref{claim1}, they are pairwise almost independent~\eqref{pairwise-ind}. For the harmonic measure, the sets $X_{n,n}$ simply satisfy
\begin{equation}\label{harmonic-sum}
\sum \nu(X_{n,n}) < \infty
\end{equation}
The proposition then follows directly from the generalized Borel-Cantelli Lemma~\ref{BC1}.
\begin{claim}\label{claim1}
There exists some constant $c>1$ such that for any $n,m$ odd
\[
\ell \bigl(X_{m,m}\cap X_{n,n}\bigr) <  c  \cdot \ell \bigl(X_{m,m}\bigr) \ell\bigl(X_{n,n}\bigr)
\]
\end{claim}
\begin{proof}
Let $U = X_{m,m}\cap X_{n,n}$ and $V= X_{1,m-1} \cap X_{n-m+1, n}$. For each set of positive integers $S=\{a_1, \cdots, a_{m-1}\}$, let $W(S)$ be the set of points whose expansion begins with the sequence $R^{a_1}L^{a_2} \cdots L^{a_{m-1}} R$. By Proposition~\ref{eq-dist-torus}, the sets $W(S)$ partition a set of full measure. So it is enough to show that for all $S$
\[
\ell(W(S) \cap U) < c \cdot \ell(W(S) \cap X_{m,m}) \ell(W(S) \cap X_{n,n})
\]
Taking union over all $W(S)$ gives us the claim. Because $R^{a_1}L^{a_2} \cdots L^{a_{m-1}} R$ is a $2$-distributed stage, by Equation~\eqref{rel-prob}, up to a universal constant, the relative probabilities satisfy
\begin{eqnarray*}
P(X_{m,m} \vert W(S)) &\approx& \ell(X_{1,m-1}) \\
P(X_{n,n}\vert W(S)) &\approx& \ell(X_{n-m+1,n})\\
P(U \vert W(S)) &\approx& \ell(V)
\end{eqnarray*}
Thus it is enough to prove that there exists a constant $c>1$ such that for any $n, m$ odd and $n>m$
\begin{equation}\label{E2}
\ell(X_{1,m-1} \cap X_{n-m+1,n}) < c \cdot \ell(X_{1,m-1}) \ell(X_{n-m+1,n})
\end{equation}
In fact, we shall show that there exists a constant $c>1$ such that for any positive integers $m,k>1$ and any odd integer $n$
\begin{equation}\label{E3}
\ell(X_{1,m} \cap X_{n,k}) < c \cdot \ell(X_{1,m}) \ell(X_{n,k})
\end{equation}
Replacing $m$ by $(m-1)$, $n$ by $(n-m+1)$ and $k$ by $n$ in the above inequality implies Inequality~\eqref{E2}. The proof of Inequality~\eqref{E3} is as follows: For each set of positive integers $S'= \{a_1, \cdots, a_{n-1}\}$, let $W(S')$ be the set of points whose expansion begins with the sequence $R^{a_1}L^{a_2} \cdots L^{a_{n-1}} R$. By Proposition~\ref{eq-dist-torus}, the stages $R^{a_1}L^{a_2} \cdots L^{a_{n-1}} R$ are 2-distributed and the sets $W(S')$ partition a set of full measure. Moreover each such stage $W(S')$ either belongs entirely to $X_{1,m}$ or belongs entirely to the complement of $X_{1,m}$ depending on whether $a_1 > m$ or not. Because of $2$-distribution, we have $P(X_{n,k} \vert W(S')) \approx \ell(X_{1,k-1})$. So
\begin{eqnarray*}
\ell(X_{1,m} \cap X_{n,k}) &\approx& \sum_{W(S') \subset X_{1,m}} P(X_{n,k} \vert W(S')) \ell(W(S')) \\
&\approx& \ell(X_{1,k-1}) \sum_{W(S') \subset X_{1,m}} \ell(W(S')) \\
&\approx& \ell(X_{1,k-1}) \ell(X_{1,m})\\
\end{eqnarray*}
Finally, by Equation~\eqref{rel-prob}, we have $\ell(X_{1,k-1}) \approx \ell(X_{n,k})$. Using this in the above equation finishes the proof of Inequality~\eqref{E3}, and hence of the claim.
\end{proof}
\end{proof}
\noindent Finally to get a singular set of full measure, we consider the union
\[
Y = \bigcup_{g \in PSL(2,\Z)} g X
\]
The set $Y$ is a countable union of translates of $X$, so $\nu(Y) = 0$. On the other hand, $Y$ is a set invariant under the action of $PSL(2,\Z)$. By the ergodicity of the action of $PSL(2,\Z)$ on $\partial \H^2$, invariant sets have zero or full measure. Hence $\ell(Y)=1$ which proves that $\ell$ and $\nu$ are mutually singular.

\subsection{General random walks on $PSL(2, \Z)$:}
The appropriate notion of a non-elementary subgroup of $PSL(2,\Z)$ is that the subgroup contains a pair of hyperbolic isometries $h_1$ and $h_2$ of $\H^2$ with distinct attracting and repelling fixed points in $\partial \H^2$. Since we do not assume that the initial distribution on $PSL(2, \Z)$ is symmetric i.e. $\mu(g) = \mu(g^{-1})$, we assume that the semi-group generated by the support contains the hyperbolic isometries $h_1$ and $h_2$ as above. By conjugating the semigroup by a rotation of $\D^2$, we can assume that the attracting fixed point of $h_1$ is in $(0,1)$. For such a finitely supported initial distribution, the same construction produces a singular set provided Equation~\ref{harmonic-sum} still holds. So it is enough to show that for $n$ large enough, the harmonic measure of $X_{n,n}$ decays exponentially i.e. there exists a constant $r < 1$ and a positive integer $N$ such that $\nu(X_{n,n}) < r^n$ for all $n > N$.

Since the group $PSL(2,\Z)$ is quasi-isometric to the trivalent tree the random walk can be projected on to the trivalent tree by using the quasi-isometry. We assume that the quasi-isometry maps the identity $1 \in PSL(2,\Z)$ to the base-point $\bb$, so that the projected random walk starts from $\bb$. Because of the quasi-isometry, the random walk on $PSL(2,\Z)$ satisfies estimates similar to those satisfied by the projected random walk. The actual constants in the estimates for $PSL(2, \Z)$ also depend on the quasi-isometry constants, but that does not affect the construction. Hence, it is enough to prove an estimate of the above form holds for the projected random walk on the trivalent tree.

The estimate for $X_{n,n}$ can be proved by adapting the proof of Lemma 5.4 of~\cite{Mah2}. For a vertex $v$ distinct from $\bb$, let $C_v$ be the ``cone'' of vertices in the tree such that the unique geodesic connecting them to $\bb$ passes through $v$. Similar to Proposition 5.3 of~\cite{Mah2}, one shows that there is a definite positive integer $K$ and a constant $\epsilon >0$ that depend only on the initial distribution, such that if $d(\bb,v) = K$, then $\nu(\overline{C_v}) \leqslant 1-\epsilon$. To prove that $\nu(\overline{C_v})$ can be bounded away from 1, we construct a pair of Schottky cones for suitable powers of $h_1$ and $h_2$, similar to what Maher does at the beginning of Section 5 of~\cite{Mah2}. The bound for $\nu(\overline{C_v})$ then follows by exploiting the $\delta$-hyperbolicity of the tree. Finally, the set $X_{n,n}$ is the disjoint union of limit sets of cones for vertices of the form $v = R^{a_1}L^{a_2} \cdots L^{a_{n-1}}R^{n}$. So by using the estimate for the cones inductively we get the exponential decay for $\nu(X_{n,n})$.

\subsection{The general mapping class groups:}

The element $R$ as a split of the classical interval exchange on the torus in Fig~\ref{torustrack}, can be seen to be a Dehn twist in the vertex curve given by band 1. This vertex curve is the curve with slope $0$ in the Farey graph of Figure~\ref{dual}, and $R$ acts on $\partial \H^2$ as a parabolic element with fixed point $0$.

For the general mapping class groups, the action of the reducible elements on $\pmf$, in particular Dehn twists, is similar to the action of $R$ on $\partial \H^2$. We show that a suitable ``half-open'' neighborhood in $\pmf$ of the fixed point of a Dehn twist scales down at the rate 1/poly for the Lebesgue measure and $\exp(-n)$ for the harmonic measure, under the repeated application of that Dehn twist. So the construction of the singular set is essentially similar. However, because of the issues outlined below, it becomes technically harder in this context to construct the sequence of sets $Y^{(m)}_n$ to which Proposition~\ref{BC2} is applied, and trickier to prove the above mentioned measure estimates for these sets.

First, a natural class of charts on $\pmf$ is obtained from {\em complete} train tracks on the surface. Projective classes of measured foliations carried by a complete train track can be encoded by splitting sequences of that track. However, this involves choosing a large edge to split at each stage of the expansion, which means that this encoding is not unique. More importantly, the combinatorics of the corresponding Rauzy diagram i.e the directed graph with vertices as the combinatorial types of complete train tracks and arrows given by splits, is not well understood. As a result, we do not know how to prove an analog of Proposition~\ref{eq-dist-torus} for complete train tracks, and such an analog is essential to estimate Lebesgue measure. To circumvent these difficulties, we consider non-classical interval exchanges instead viz. complete train tracks with a single vertex. Here, the Rauzy diagram, is better understood \cite{Boi-Lan}. In particular, Boissy and Lanneau~\cite{Boi-Lan} characterize the attractors of this directed graph (alternatively known as the Rauzy classes) in terms of suitable irreducibility criteria for the combinatorics of the non-classical interval exchanges. This allows us to show that the combinatorics of the initial non-classical interval exchange that we construct, belongs to some attractor. The Rauzy induction which was defined for classical interval exchanges generalizes directly to non-classical interval exchanges. Unlike the situation for complete train tracks, for non-classical interval exchanges, the expansions given by repeated Rauzy induction uniquely encode the projective classes of measured foliations carried by them. Secondly, some of the techniques used to study expansions of classical interval exchanges, generalize to non-classical interval exchanges. In particular, in \cite{Gad}, we proved an analog of Proposition~\ref{eq-dist-torus} for non-classical interval exchanges, namely Theorem~\ref{Uniform-distortion}.

The second issue is that for the general mapping class groups, the appropriate quasi-isometric models are the marking complexes. Unlike $PSL(2,\Z)$ which is $\delta$-hyperbolic, the geometry of the marking complex is intricate. Instead, we project the mapping class group random walk to the curve complex. The combination of the results of Kaimanovich-Masur and Klarreich shows that one does not lose any stochastic information about the random walk in the process. There are a number of advantages in considering the curve complex: foremost is that the curve complex is $\delta$-hyperbolic; the coarse negative curvature proves to be very useful in analyzing the projected random walk. Secondly, in the curve complex, the set of essential simple closed curves carried by a non-classical interval exchange is quasi-convex. As we shall see, a recent technique due to Masur-Mosher-Schleimer can be utilized to get information about sub-surface projections to annuli that are carried deeply inside this quasi-convex set. This information enables us to apply the key tool, Theorem~\ref{exp-decay} of Maher, to get the required exponential decay estimate for the harmonic measure.

\section{Non-classical interval exchanges}\label{Niem}

A train track on the surface is an embedded 1-dimensional CW complex in which there is a common line of tangency to all the 1-dimensional branches that join at a 0-dimensional switch.  This splits the set of branches incident on a switch into two subsets, one of which will be called the set of outgoing edges and the other the set of incoming edges. We consider only {\em large} train tracks i.e. tracks whose complementary regions in the surface are either ideal polygons with at least 3 cusps or once-punctured polygons.

A {\em train route} is a regular smooth path in the train track. In particular, a train route traverses a switch only by passing from an incoming edge to an outgoing edge or vice versa. An essential simple closed curve is said to be {\em carried} by a train track if it is isotopic to a closed train route. A train track is {\em recurrent} if for every branch there is an essential simple closed curve carried by the track such that it passes through the branch. Similarly, a train track is {\em transversely recurrent} if for every branch there is an essential simple closed curve hitting the train track efficiently (i.e. the complement of the union of the train track and the curve has no bigons) such that it intersects the branch at least once. A track that is both recurrent and transversely recurrent is called {\em birecurrent}.

One can assign non-negative weights to the branches so that the sum of the weights of the outgoing branches at a switch is equal to the sum of the weights of the incoming branches. In particular, by counting the number of times a carried multi-curve passes over a branch we get an assignment of integral weights that automatically satisfies the switch conditions. After normalizing so that the sum of weights is 1, the set of possible weights on the train track can be identified with a set of projective classes of measured foliations. These projective classes of measured foliations are said to be {\em carried} by the train track.

A train track is {\em maximal} if it is not a proper subtrack of any other train track. A maximal birecurrent train track is called {\em complete}. It is easy to see that if a train track is complete then all of it's complementary regions are ideal triangles or once punctured monogons. The converse is not true. See \cite{Pen-Har} for examples. The set of normalized weights carried by a complete train track gives a chart of $\pmf$. A generic foliation carried by a complete train track has a three pronged singularity in each complementary ideal triangle and a simple zero in every once-punctured monogon.

For technical reasons mentioned at the end of Section~\ref{Torus}, instead of complete train tracks, we consider {\em non-classical interval exchanges}. A non-classical interval exchange is a complete train track with a single switch. This means that there is exactly one switch condition. Such a train track can be drawn as an interval with bands similar to the picture for classical interval exchanges. Hence the terminology non-classical interval exchange. The main difference here is that in a non-classical interval exchange, there necessarily exist bands that run from top to top and bottom to bottom. Such bands shall be called orientation reversing bands. The set of possible normalized widths that can be assigned to the bands can be identified with projective classes of measured foliations carried by the non-classical interval exchange. Similar to classical interval exchanges, Rauzy induction is defined for non-classical interval exchanges by comparing widths of the bands in critical positions on the extreme right of the base interval, and then splitting the broader band by the narrower one till we hit the base interval again. In general, splitting a recurrent train track need not give a recurrent train track; Rauzy induction for non-classical interval exchanges however preserves recurrence. Splitting a non-classical interval exchange in this manner gives a new non-classical interval exchange with widths given exactly as in Equation~\eqref{width-rel}. Moreover, if the initial non-classical interval exchange is labeled then the labeling induces forward under Rauzy induction.

Starting with any permissible initial widths, repeated Rauzy induction gives a unique expansion associated to it. The induction is not defined when $\lambda_\alpha = \lambda_\beta$. For non-classical interval exchanges, this is always a co-dimension 1 condition, and hence measure zero. The set of foliations for which the expansion stops after a finite number of steps is a countable union of such measure zero sets, and hence of measure zero. Thus a full measure set of foliations has infinite expansion.

\subsection{Parameter space of a non-classical interval exchange:}
For all non-classical interval exchanges $T$ sharing the same combinatorics $\phi$, a choice of labeling gives a bijection between the set of bands and the standard basis of $\R^d$. Label the standard basis $\{e_\alpha \}$ using the bijection. We get a map from the set of such $T$ into $(\R_{\geqslant 0})^d \setminus \{ 0 \}$ by thinking of the widths $(\lambda_\alpha)$ as co-ordinates for $e_\alpha$. Denote the image by $W$. The normalization $\sum \lambda_\alpha =1$, restricts $W$ to lie in the standard $(d-1)$-simplex $\Delta$. The other constraint that points in $W$ satisfy is the switch condition imposed by the combinatorics $\phi$. Let $S_t$ and $S_b$ be the labels of orientation reversing bands on the top and the bottom respectively. The switch condition is equivalent to
\[
\sum_{\alpha \in S_t} \lambda_{\alpha}  = \sum_{\alpha \in S_b} \lambda_{\alpha}
\]
Thus $W$ is the intersection with $\Delta$ of a codimension 1 subspace of $\R^d$. For $\alpha \in S_t$ and $\beta \in S_b$, let $e_{\alpha \beta}$ be the midpoint of the edge $[e_\alpha, e_\beta]$ of $\Delta$ joining the vertices $e_{\alpha}$ and $e_{\beta}$. The subset $W$ is the convex hull of the points $e_{\alpha \beta}$ and $e_{\gamma}$ for $\gamma \notin S_t \cup S_b$. Since there are finitely many choices for the pair $(S_t,S_b)$ of disjoint subsets, the set of possible convex codimension 1 subsets of $\Delta$ that could be $W$ is also finite. We call the different subsets of $\Delta$ coming from all possible pairs $(S_t,S_b)$, {\em configuration spaces}.

\subsection{Combinatorics of the Rauzy diagram:}
Similar to the Rauzy diagram for a classical interval exchanges, construct an oriented graph $\overline{\cG}$ for non-classical interval exchanges as follows:  the nodes of the graph are combinatorial types $\phi$ of labeled non-classical interval exchanges with $d$ bands. Draw an arrow from $\phi$ to $\phi'$, if $\phi'$ is a combinatorial type resulting from splitting $\phi$. For each node $\phi$, there are at most two arrows coming out of it. A splitting sequence gives us a directed path in $\overline{\cG}$.

It is also possible to construct a different oriented graph $\cG$ in which the nodes are combinatorial types of non-classical interval exchanges with $d$ bands {\em without} labeling. The arrows are drawn as before. The graph $\cG$ is analogous to the {\em reduced} Rauzy diagram for irreducible classical i.e.m. See Section 4.2 of Yoccoz \cite{Yoc}. There is an obvious map from $\overline{\cG}$ to $\cG$ given by forgetting the labeling.

However, there are some key differences between the Rauzy diagrams for classical and non-classical interval exchanges.
For irreducible classical interval exchanges, each component of the Rauzy diagram is {\em strongly connected} i.e.~any node can be joined to any other node by a directed path. This implies that the reduced Rauzy diagram is also strongly connected. This is not the case with non-classical interval exchanges.

In \cite{Boi-Lan}, Boissy and Lanneau show that for non-classical interval exchanges, every component of $\overline{\cG}$ has strongly connected pieces termed {\em attractors}. Each attractor corresponds to a connected component of the principle stratum. Additionally, Boissy and Lanneau introduce the notion of {\em geometric irreducibility} of a non-classical interval exchanges, and show that every geometrically irreducible non-classical interval exchange sits in some attractor of the Rauzy diagram.

In the next section, we explicitly construct initial combinatorics for a non-classical interval exchange on any surface. It can be directly checked from Definition 3.1 in \cite{Boi-Lan} that our construction gives us combinatorics that is geometrically irreducible. Hence, the initial combinatorics lies in some attractor of the Rauzy diagram.

\subsection{Dynamics:}
Assume that the initial combinatorics is what we construct in the next section, ensuring that we are in an attractor of the Rauzy diagram. We denote the initial combinatorics and labeling by $\phi_0$. Let $W_0$ be the configuration space at $\phi_0$ and let $\x$ in $W_0$ be a non-classical interval exchange. Identical to classical interval exchanges, each step of Rauzy induction is encoded as an elementary matrix $E$. The expansion of $\x$ by repeated Rauzy induction gives a directed path starting from $\phi_0$ in the Rauzy diagram $\overline{\cG}$. Iteratively, to each stage $\phi_0 \to \phi_{\x,1} \cdots \to \phi_{\x,n}$ in the expansion of $\x$, there is an associated matrix $Q_{\x,n}$ such that $Q_{\x,n} = Q_{\x,n-1}E$, where $E$ is the elementary matrix for the split $\phi_{\x,n-1} \to \phi_{\x,n}$. Let $W_n$ be the configuration space at $\phi_{\x,n}$. One also gets a sequence of points $\x^{(n)} \in W_n$ such that $JQ_{\x,n} (\x^{(n)})= \x$.

Suppose $\jmath: \phi_0 \to \cdots \to \phi$ is a finite splitting sequence with associated matrix $Q_\jmath$. The set of $\x$ in $W_0$ whose expansion begins with $\jmath$ is the set $JQ_\jmath(W)$, where $W$ is the configuration space at $\phi$. We call $\jmath$ a {\em stage} in the expansion.

As in the case of classical interval exchanges, we have to understand the probability that a stage $\jmath$ is followed by a particular splitting sequence. The difference here is that we have to consider the Jacobian of the restriction of the projective linear map to the configuration spaces i.e. the Jacobian of $JQ_\jmath: W \to W_0$. We denote this Jacobian by $\J(JQ_\jmath)$. Because of this, the probability that a particular split follows $\phi$ can be quantitatively very different from the naive estimate using the full Jacobian $\J_\Delta$ of $JQ_\jmath: \Delta \to \Delta$, which was used in the classical case. See Section 6 in \cite{Gad}. Nevertheless, in \cite{Gad}, we prove an analog of Proposition~\ref{eq-dist-torus} for non-classical interval exchanges. We state this below.

For a constant $C>1$, we define a $C$-uniformly distorted stage to be a stage $\jmath$ such that for any pair of distinct points $\y, \y'$ in $W$,
\[
\frac{1}{C} \leqslant  \frac{\J(JQ_\jmath)(\y)}{\J(JQ_\jmath)(\y')} \leqslant  C
\]
Suppose that the final combinatorics $\phi$ of $\jmath$ is the same as $\phi_0$, and let $\kappa$ be a finite splitting sequence starting from $\phi_0$. Let $W$ be the configuration space of the stage $\kappa$. Exactly similar to Lemma~\ref{control}, we have

\begin{lemma}\label{nciem-control}
If the stage $\jmath$ is $C$-uniformly distorted, then there exists a constant $c>1$ that depends only on $C$ and $d$, such that the relative probability that $\kappa$ follows $\jmath$ satisfies
\begin{equation}
\frac{1}{c} \ell(JQ_\kappa(W_0)) < \frac{\ell(JQ_{\jmath \ast \kappa}(W))}{\ell(JQ_\jmath(W_0))} < c \cdot \ell(JQ_\kappa(W))
\end{equation}
\end{lemma}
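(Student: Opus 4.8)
The plan is to transcribe the proof of Lemma~\ref{control} essentially verbatim, with the full-simplex Jacobian $\J_\Delta$ replaced throughout by the configuration-space Jacobian $\J$. Set $Y_\jmath = JQ_\jmath(W_0)$, $Y_\kappa = JQ_\kappa(W)$ and $Y_{\jmath \ast \kappa} = JQ_{\jmath \ast \kappa}(W)$. Since the terminal combinatorics of $\jmath$ is $\phi_0$, the iterative rule $Q = Q'E$ gives $Q_{\jmath \ast \kappa} = Q_\jmath Q_\kappa$, so $JQ_{\jmath \ast \kappa} = JQ_\jmath \circ JQ_\kappa$ and hence $Y_{\jmath \ast \kappa} = JQ_\jmath(Y_\kappa)$ with $Y_\kappa \subseteq W_0$. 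Note that the domain of $JQ_\jmath$ is the configuration space at its terminal node $\phi = \phi_0$, namely $W_0$, so $Y_\kappa$ is a legitimate measurable subset of that domain; and because a split preserves the switch condition, $JQ_\jmath$ is genuinely an injective self-map of $W_0$.

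First I would apply the change-of-variables formula for $JQ_\jmath \colon W_0 \to W_0$ with respect to the measure $\ell$ on the configuration space, $\ell(JQ_\jmath(A)) = \int_A \J(JQ_\jmath)(\x)\, d\ell(\x)$ for measurable $A \subseteq W_0$. Taking $A = Y_\kappa$ and $A = W_0$,
\[
\frac{\ell(Y_{\jmath \ast \kappa})}{\ell(Y_\jmath)} \;=\; \frac{\int_{Y_\kappa} \J(JQ_\jmath)(\x)\, d\ell(\x)}{\int_{W_0} \J(JQ_\jmath)(\x)\, d\ell(\x)}.
\]
Now I would use the hypothesis that $\jmath$ is $C$-uniformly distorted: the value of $\J(JQ_\jmath)$ at any two points of $W_0$ differs by a factor in $[1/C,C]$, so each of the two integrals equals this common near-constant value times the $\ell$-measure of its domain of integration, up to a factor in $[1/C,C]$. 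Dividing, the ratio lies between $\tfrac{1}{C}\,\ell(Y_\kappa)/\ell(W_0)$ and $C\,\ell(Y_\kappa)/\ell(W_0)$. Finally, since there are only finitely many configuration spaces and each has $\ell$-measure bounded above and below by positive constants depending only on $d$, the factor $1/\ell(W_0)$ --- together with the harmless comparison between $\ell(JQ_\kappa(W))$ and $\ell(JQ_\kappa(W_0))$ --- can be absorbed into a single constant $c = c(C,d) > 1$, giving the stated two-sided bound.

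I do not expect a genuine obstacle internal to this lemma; it is a mechanical adaptation of Lemma~\ref{control}. The only point requiring care is bookkeeping about which spaces the maps act on: one must use that a split preserves the codimension-one switch condition, so that with $\phi = \phi_0$ the change of variables is carried out on $W_0$ and not on $\Delta$, and correspondingly that the Jacobian $\J$ in the definition of $C$-uniform distortion is the one taken with respect to the induced measure on the configuration spaces. The substantive input --- that $C$-uniformly distorted stages exist and in fact recur infinitely often along almost every expansion --- is the content of Theorem~\ref{Uniform-distortion}, cited from \cite{Gad}, and is not reproved here.
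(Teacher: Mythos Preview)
Your proposal is correct and is exactly what the paper intends: the paper does not give a separate proof of this lemma but simply states it is ``exactly similar to Lemma~\ref{control}'', and your adaptation---replacing $\Delta$ by $W_0$, $\J_\Delta$ by the configuration-space Jacobian $\J$, and invoking $C$-uniform distortion in place of $C$-distribution---is precisely that transcription. Your bookkeeping remarks (that $\phi=\phi_0$ makes the domain $W_0$, that splits preserve the switch condition so $JQ_\jmath$ is a self-map of $W_0$, and that the finitely many configuration spaces let you absorb $1/\ell(W_0)$ into $c$) are all accurate and address the only points where care is needed.
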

Thus, Lemma~\ref{nciem-control} gives us exactly the same control as Lemma~\ref{control} for estimating relative probabilities.

It is straightforward to check that if the matrix $Q_\jmath$ is $C$-distributed, then the stage $\jmath$ is $C$-uniformly distorted. The converse need not be true. However, it means that in the analog of Proposition~\ref{eq-dist-torus}, it suffices to show that starting from any stage, almost every expansion gets $C$-distributed. The precise statement proved as Theorem 1.3 in \cite{Gad}, is:

\begin{theorem}\label{Uniform-distortion}
Suppose $\jmath: \phi_0 \dotsc \to \phi$ is a stage in the expansion, with $W$ the configuration space at $\phi$ and $Q_\jmath$ the associated matrix. There exists a constant $C>1$, independent of $\jmath$, such that for almost every $\x \in JQ_\jmath(W)$, there is a future stage $\phi_{\x,m}$ after $\phi$, such that the stage $\phi_{\x,m}$ is $C$-distributed. Additionally, by choosing $C$ large enough, we can assume that $\phi_{\x,m}$ is combinatorially the same as $\phi_0$.
\end{theorem}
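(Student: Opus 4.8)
The plan is to exploit a simple linear-algebra fact --- a splitting sequence whose associated matrix is strictly positive automatically makes all the column norms of $Q_\jmath$ comparable --- and then to show that almost every infinite expansion runs through such a sequence, based at $\phi_0$, infinitely often. Since the initial combinatorics $\phi_0$ is geometrically irreducible it lies in an attractor $\mathcal A$ of the Rauzy diagram $\overline{\cG}$, which by \cite{Boi-Lan} is a strongly connected, forward-invariant subgraph; in particular the whole expansion of any $\x$ in a configuration space at $\phi_0$ stays in $\mathcal A$. Each split acts on the running matrix by the rank-one update $Q\mapsto Q(I+M_{\alpha\beta})$, that is, it replaces one column by its sum with another. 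Using strong connectivity, I would first invoke the standard construction of a loop $\sigma$ at $\phi_0$, lying in $\mathcal A$, traversed long enough that every column of the running matrix becomes a positive combination of all the standard basis vectors, so that $P=Q_\sigma$ is strictly positive. Fix one such loop in each attractor and set $C_0=\max_{\alpha,\beta,\gamma}P_{\alpha\beta}/P_{\alpha\gamma}<\infty$; this depends only on the finitely many attractors, not on $\jmath$.

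Next comes the elementary estimate. If $\tau$ is any stage ending at $\phi_0$ which is followed in the expansion by $\sigma$, then $Q_{\tau\ast\sigma}=Q_\tau P$, and since every matrix here has non-negative entries, $\vert(Q_\tau P)(\beta)\vert=\sum_\alpha P_{\alpha\beta}\,\vert Q_\tau(\alpha)\vert$; hence the ratio of any two column norms of $Q_{\tau\ast\sigma}$ lies in $[1/C_0,C_0]$, so $\tau\ast\sigma$ is $C_0$-distributed, and, being a loop back to $\phi_0$, it ends with combinatorics exactly $\phi_0$. Therefore it is enough to prove that for almost every $\x\in JQ_\jmath(W)$ the continued expansion of $\x$ passes through a $\phi_0$-stage and follows $\sigma$ out of it --- in fact infinitely often. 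Taking $C$ to be $C_0$ (or any larger constant) then yields Theorem~\ref{Uniform-distortion}, including the clause on the combinatorial type.

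The remaining point is a recurrence statement for the renormalization. Normalized Rauzy induction is a map on the disjoint union $\bigsqcup_{\phi\in\mathcal A}W_\phi$ of configuration spaces, and ``$\x$ follows $\sigma$ out of a $\phi_0$-stage'' corresponds, at the appropriate level of the expansion, to the renormalized point landing in the fixed cylinder $JQ_\sigma(W_0)\subset W_0$, a set of positive Lebesgue measure. Granting the analog of the Masur--Veech theorem for non-classical interval exchanges --- that this renormalization is conservative and ergodic with respect to a measure in the Lebesgue class on the attractor (here one uses the results of \cite{Boi-Lan} on the Rauzy--Veech dynamics of non-classical interval exchanges) --- it follows that Lebesgue-almost every orbit meets $JQ_\sigma(W_0)$ infinitely often, which is exactly what is needed. (If one wishes to assume only conservativity: the first time the expansion follows $\sigma$ out of a $\phi_0$-stage one already has a $C_0$-distributed $\phi_0$-stage $\tau$, after which Lemma~\ref{nciem-control} bounds the conditional probability of $\sigma$ following $\tau$ from below by a constant comparable to $\ell(JQ_\sigma(W_0))$ while preserving both $C_0$-distribution and the terminal type $\phi_0$, and a conditional Borel--Cantelli argument then forces infinitely many such returns.)

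The hard part is precisely this recurrence input: for classical interval exchanges it is the Masur--Veech theorem (with Zorich's acceleration furnishing a finite invariant measure), whereas for non-classical interval exchanges one must establish --- or cite, as is effectively done through \cite{Gad} and ultimately \cite{Boi-Lan} --- that Rauzy induction restricted to an attractor is conservative and ergodic in the Lebesgue class. Everything else (the positive-loop construction, the column-norm comparison, and the Borel--Cantelli bookkeeping) is routine once the machinery of Section~\ref{Niem}, in particular Lemma~\ref{nciem-control}, is in place.
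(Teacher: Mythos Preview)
The paper does not prove Theorem~\ref{Uniform-distortion}; it is quoted here as Theorem~1.3 of \cite{Gad} and no argument is given in the present paper. So there is no in-paper proof to compare against.

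Your outline follows the classical template: produce a loop $\sigma$ at $\phi_0$ in the attractor whose matrix $P=Q_\sigma$ is strictly positive, observe that any stage $\tau$ ending at $\phi_0$ followed by $\sigma$ is automatically $C_0$-distributed with $C_0$ depending only on $P$, and then argue that almost every expansion follows $\sigma$ out of a $\phi_0$-stage. The column-norm computation is correct. The existence of a positive loop in an attractor is plausible but you should not call it ``standard'': the standard argument is for classical interval exchanges, and in the non-classical setting the presence of orientation-reversing bands changes which elementary matrices $I+M_{\alpha\beta}$ can occur, so one has to verify that strong connectivity of the attractor still lets every column eventually receive a contribution from every other.

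The genuine gap is the recurrence step, and you name it yourself. Everything rests on knowing that Lebesgue-almost every expansion visits the cylinder $JQ_\sigma(W_0)\subset W_0$; you propose to get this from conservativity or ergodicity of Rauzy induction on the attractor in the Lebesgue class. For non-classical interval exchanges this is not a corollary of Masur--Veech, and \cite{Boi-Lan} characterizes the attractors combinatorially but does not supply an ergodic theorem for the induction. You then cite \cite{Gad} for this input --- but \cite{Gad} is exactly the paper in which Theorem~\ref{Uniform-distortion} itself is established, so as a proof of the theorem this is circular. Your fallback via Lemma~\ref{nciem-control} and conditional Borel--Cantelli has the same defect: Lemma~\ref{nciem-control} applies only once one already has a $C$-uniformly-distorted stage, and producing the \emph{first} such stage without any distortion control is precisely the recurrence statement you are trying to avoid assuming.

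In summary, your reduction of the theorem to a recurrence property of the non-classical Rauzy induction is correct and the bookkeeping after that point is sound, but the recurrence itself is the substantive content, and you have not provided it independently of \cite{Gad}.
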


\section{Construction of combinatorics for the initial non-classical interval exchange}\label{Combinatorics}
When a splitting sequence of a labeled non-classical interval exchange gives back the same labeling and combinatorics, it corresponds to the action of a mapping class.

In Section~\ref{Torus}, the Dehn twist $R$ in $PSL(2,\Z)$ was realized as a split of the interval exchange in Figure~\ref{torustrack}. In this section, we show that it is possible to emulate this phenomena for any surface i.e. given a surface with genus $g$ and $m$ punctures, we construct a non-classical interval exchange on it such that a Dehn twist in one of it's vertex cycles is realized by a splitting sequence. We call this splitting sequence, the {\em Dehn twist} sequence.

\subsection{The construction:} The complementary regions to a non-classical interval exchange on a surface of genus $g$ and $m$ punctures consist of $(4g-4+m)$ ideal triangles and $n$ once punctured monogons. We claim that any interval with bands that has the right number of ideal triangles and once punctured monogons in the complement is a non-classical interval exchange on the surface. The maximality and recurrence of the underlying train track are immediate. Transverse recurrence follows from an easy application of Corollary 1.3.5 in \cite{Pen-Har}.

We break down our construction into several cases. Throughout, the non-classical interval that we construct contains a single orientation preserving band labeled $B$ whose end on top will be the leftmost on $I$ and whose other end on bottom will be rightmost on $I$. All other bands are orientation reversing. If we split the bottom end of $B$ twice by all bands on top, then the base interval of the resulting interval exchange becomes the top end of $B$ and the resulting labeling and combinatorics is identical to the initial. The associated mapping class is easily seen to be a Dehn twist in the curve given by going along $B$ from it's bottom end to it's top end and then back along $I$ from left to right till we get back to the bottom end of $B$. Thus, we have a Dehn twist sequence of the non-classical interval exchange. The other details of the construction differ according to the case. The cases do not include some surfaces of low complexity. We shall construct the non-classical interval exchanges for these at the end of the section.

Next, we describe some of the essential pictures we need in the construction.

\subsubsection{The basic block:}

Consider a picture of a horizontal interval $I$ with orientation reversing bands on one side as shown in Figure~\ref{basicblock}.

\begin{figure}[htb]
\begin{center}
\ \psfig{file=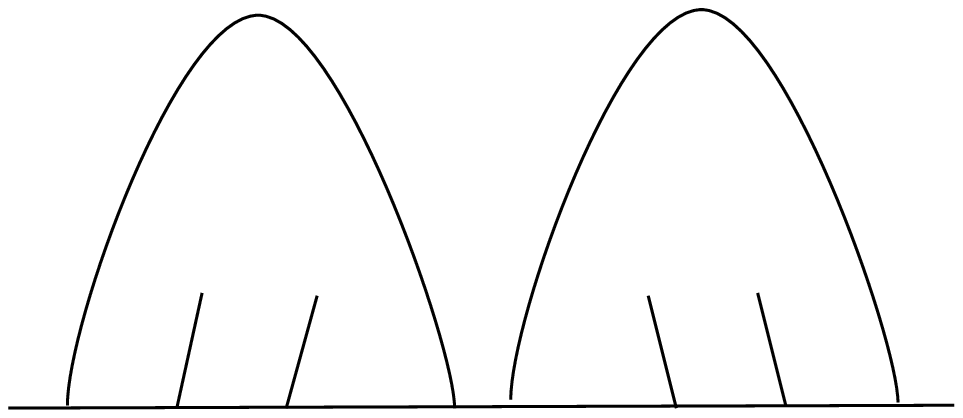, height=1.5truein, width=4truein} \caption{Basic Block} \label{basicblock}
\end{center}
\setlength{\unitlength}{1in}
\begin{picture}(0,0)(0,0)
\put(-1.3,2){1}\put(1.3,2){2}\put(-1.2,1.15){3}\put(1.15,1.15){4}\put(-0.7,1.15){4}\put(0.7,1.15){3}
\end{picture}
\end{figure}

The right end of band 1 is adjacent to the left end of band 2 along $I$. The pair of segments marked 3 and 4 are the ends of bands labeled 3 and 4 respectively. We have drawn only the ends of bands 3 and 4 to keep the figure simple. It is easy to check that the complementary regions of the basic block contains two ideal triangles: the first triangle has sides made up of the bands 1,3 and 4, and the second triangle has sides made up of bands 2,3 and 4. These triangles are on the ``inside'' of the bands 1 and 2 respectively.

\subsubsection{The outer blocks:}

We call the pictures in Figures~\ref{outerblock1} and~\ref{outerblock2} as {\em outer block 1} and {\em outer block 2} respectively. We shall use one or the other in the construction depending on the case in question.

\begin{figure}[htb]
\begin{center}
\ \psfig{file=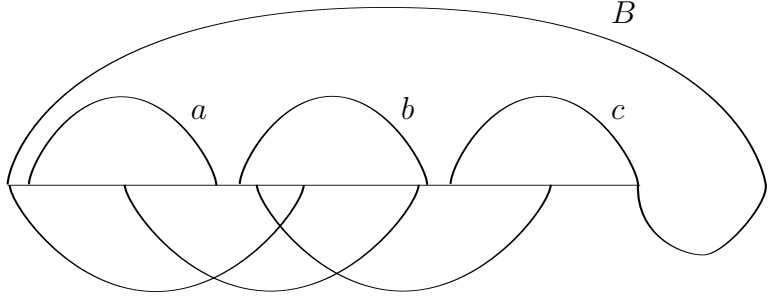, height=1.5truein, width=4truein} \caption{Outer Block 1} \label{outerblock1}
\end{center}
\setlength{\unitlength}{1in}
\begin{picture}(0,0)(0,0)
\put(1.2,2){$B$}\put(-1,1.5){$a$}\put(0.1,1.5){$b$}\put(1.2,1.5){$c$}
\end{picture}
\end{figure}

In outer block 1, we can identify 3 ideal triangles in the complement. The bands on the bottom of $I$ cut out two ideal triangles: the first contains the cusp that is leftmost on bottom and the second contains the cusp that is the rightmost on bottom. Additionally, the region bounded by the bands $a,b,c$ and $B$ is an ideal triangle.

\begin{figure}[htb]
\begin{center}
\ \psfig{file=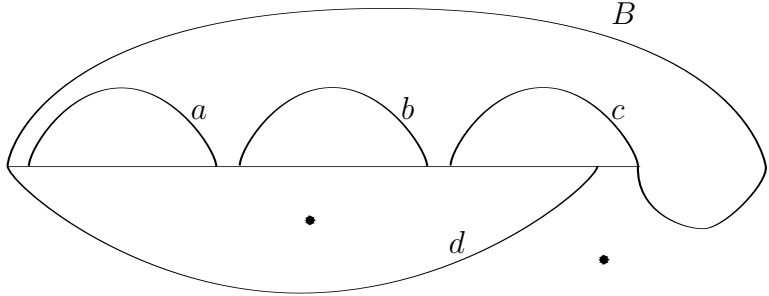, height=1.5truein, width=4truein} \caption{Outer Block 2} \label{outerblock2}
\end{center}
\setlength{\unitlength}{1in}
\begin{picture}(0,0)(0,0)
\put(1.2,2){$B$}\put(-1,1.5){$a$}\put(0.1,1.5){$b$}\put(1.2,1.5){$c$} \put(0.35,0.8){$d$}
\end{picture}
\end{figure}

In outer block 2, it is necessary to have one puncture inside and one puncture outside band $d$ as shown in Figure~\ref{outerblock2} to have those complementary regions as once punctured monogons.

Now we get to the various cases in the construction.

\subsubsection{Case 1: $m=0, g \geqslant 4$} Construct first the base interval $I$ along with outer block 1. In a separate picture, arrange $(g-1)$ basic blocks laid side to side, along the top of $I$. See Figure~\ref{stuffing}.

\begin{figure}[htb]
\begin{center}
\ \psfig{file=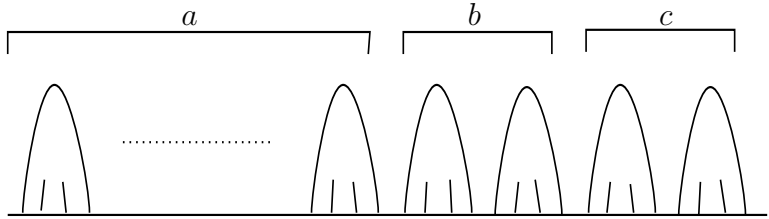, height=1truein, width=4truein} \caption{The stuffing} \label{stuffing}
\end{center}
\setlength{\unitlength}{1in}
\begin{picture}(0,0)(0,0)
\put(-1.05,1.6){$a$}\put(0.45,1.6){$b$}\put(1.45,1.6){$c$}
\end{picture}
\end{figure}

Take the basic blocks under the bracket $a$ in Figure~\ref{stuffing} and insert them inside the orientation reversing band $a$ in outer block 1. Similarly insert the basic block under brackets $b$ and $c$ inside the orientation reversing bands $b$ and $c$ respectively in outer block 1. In other words, Figure~\ref{stuffing} is super-imposed on Figure~\ref{outerblock1} such that the basic blocks under the marked brackets sit inside the corresponding orientation reversing bands.

Finally, notice that apart from the region on the inside of band $a$ and outside the basic blocks inserted inside $a$, all the complementary regions are ideal triangles. The exceptional region is an ideal polygon with $2(g-3)+1$ cusps. This can be sub-divided into $2(g-3)-1$ ideal triangles by adding in bands $a_i$ for $i = 1, \cdots, 2(g-3)-2$ as shown in Figure~\ref{add-bands}

\begin{figure}[htb]
\begin{center}
\ \psfig{file=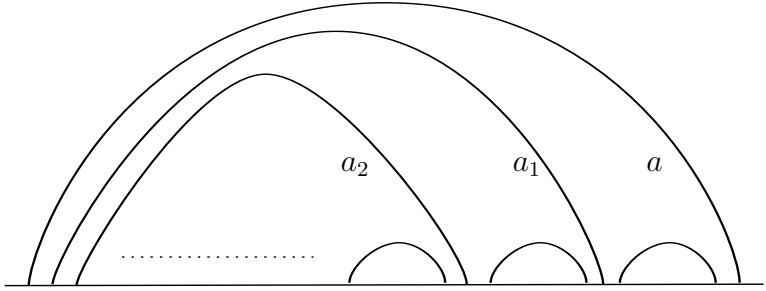, height=1.5truein, width=4truein} \caption{Dividing the polygon into ideal triangles}\label{add-bands}
\end{center}
\setlength{\unitlength}{1in}
\begin{picture}(0,0)(0,0)
\put(1.4,1.2){$a$}\put(0.7,1.2){$a_1$}\put(-0.2,1.2){$a_2$}
\end{picture}
\end{figure}

We claim that the resulting interval exchange is a non-classical interval exchange with a Dehn twist sequence, on a surface of genus $g$, with $g \geqslant 4$. First, it is easy to see that all complementary regions are ideal triangles. Second, the Dehn twist sequence is given by splitting $B$ twice by all the bands on top of $I$. So it is enough to show that there are $(4g-4)$ ideal triangles.

The number of ideal triangles inside the band $a$ is: $2(g-3)$ ideal triangles coming from $(g-3)$ basic blocks inside $a$, and $2(g-3)-1$ ideal triangles coming from adding in bands $a_i$, so a total of $(4g-13)$ ideal triangles. The total number of ideal triangles inside bands $b$ and $c$ is 3 each. So the total number of ideal triangles is $(4g-13)+2(3)+(3) = 4g-4$.

\subsubsection{Case 2: $m=1, g \geqslant 3$} In outer block 1, we let the single puncture sit inside band $c$. Similar to Figure~\ref{stuffing}, in a separate picture, consider $(g-1)$ basic blocks laid side to side, along the top of $I$. Superimpose this figure over outer block 1 such that the last basic block on the right is inserted inside band $b$ and the rest of the basic blocks are inserted inside band $a$. The region inside band $a$ and outside the bands inserted inside $a$, is an ideal polygon with $2(g-2)+1$ cusps. Divide this ideal polygon into $2(g-2)-1$ ideal triangles by adding bands $a_i$ for $i = 1, \cdots, 2(g-2)-2$, as in Figure~\ref{add-bands}.

The resulting interval exchange is a non-classical interval exchange with a Dehn twist sequence, on a surface with genus $g$ with $g \geqslant 3$ and a single puncture. It is easy to check that except the inside of band $c$ which is a once-punctured monogon, all complementary regions are ideal triangles. The Dehn twist sequence is similar to the previous case. A counting argument similar to the previous case shows that the complementary region consists of $(4g-3)$ ideal triangles and a single once punctured monogon, and hence we are done.

\subsubsection{Case 3: $m =2, g \geqslant 2$} In outer block 1, let the two punctures sit inside bands $b$ and $c$. Similar to Figure~\ref{stuffing}, consider $(g-1)$ basic blocks laid side to side, along the top of $I$. Superimpose this figure over outer block 1 such that all the basic blocks are inserted inside band $a$. The region inside band $a$ and outside the basic blocks inserted inside $a$, is an ideal polygon with $2(g-1)+1$ cusps. Divide this ideal polygon into $2(g-1)-1$ ideal triangles by adding bands $a_i$ for $i = 1, \cdots 2(g-1)-2$, as in Figure~\ref{add-bands}.

The counting argument shows that the complementary regions consist of $(4g-2)$ ideal triangles and two once-punctured monogons. So the resulting interval exchange is a non-classical interval exchange with a Dehn twist sequence on a surface of genus $g$ with $g \geqslant 2$ and two punctures.

\subsubsection{Case 4: $m \geqslant 3, g \geqslant 1$} In outer block 1, let two of the punctures sit inside bands $b$ and $c$. Similar to Figure~\ref{stuffing}, consider $(g-1)$ basic blocks laid side to side, along the top of $I$. In addition, construct $(m-2)$ orientation reversing bands laid side to side to the right of these basic blocks. Superimpose this figure over outer block 1 such that all the bands in the picture are inserted inside band $a$. The region inside band $a$ and outside the bands inserted inside $a$, is an ideal polygon with $2(g-1)+(m-2)+1$ cusps. Divide this ideal polygon into $2(g-2)+(m-2)-1$ ideal triangles by adding bands $a_i$ for $i = 1, \cdots, 2(g-2)+(m-2)-2$, as in Figure~\ref{add-bands}.

The counting argument shows that the complementary regions consist of $(4g-4+m)$ ideal triangles and $m$ once punctured monogons. So the resulting interval exchange is a non-classical interval exchange with a Dehn twist sequence on a surface of genus $g$ with $g \geqslant 1$ and $m \geqslant 3$ punctures.

\subsubsection{Case 5: $m \geqslant 5, g=0$} In outer block 2, two of the punctures are already accounted for. Let two other punctures sit inside bands $b$ and $c$. Consider $(m-4)$ orientation reversing bands laid side to side, along the top of $I$. Superimpose this figure over outer block 2 such that all the bands are inserted inside band $a$. The region inside band $a$ and outside the inserted bands is an ideal polygon with $(m-4)+1$ cusps. Divide this ideal polygon into $(m-4)-1$ ideal triangles by adding bands $a_i$ for $i = 1, \cdots, (m-4)-2$, as in Figure~\ref{add-bands}.

The counting argument shows that the complementary regions consist of $(m-4)$ ideal triangles and $m$ once punctured monogons. So the resulting interval exchange is a non-classical interval exchange with a Dehn twist sequence on a sphere with $m$ punctures where $m \geqslant 5$.

\subsubsection{The remaining low complexity surfaces:} The cases above exclude some low complexity examples. In each of these, we directly draw the non-classical interval exchange, and leave the details to the reader.

\begin{enumerate}
\item $\mathit{m=0, g=2:}$ Genus 2 in Figure~\ref{genus2}.
\item $\mathit{m=0, g=3:}$ Genus 3 in Figure~\ref{genus3}. Here the two instances of the same alphabet represent the two ends of the same band.
\item $\mathit{m=1, g=2:}$ Once-puncture genus 2 in Figure~\ref{genus2punc1}.
\item $\mathit{m=1, g=1:}$ In the once punctured torus case, the interval exchange is the same as the classical interval exchange with 2 bands that we considered for the torus.
\item $\mathit{m=2, g=1:}$ Twice punctured torus in Figure~\ref{genus1punc2}.
\item $\mathit{m=4, g=0:}$ 4-times punctured sphere in Figure~\ref{4puncsphere}.
\end{enumerate}
\begin{figure}[htb]
\begin{center}
\ \psfig{file=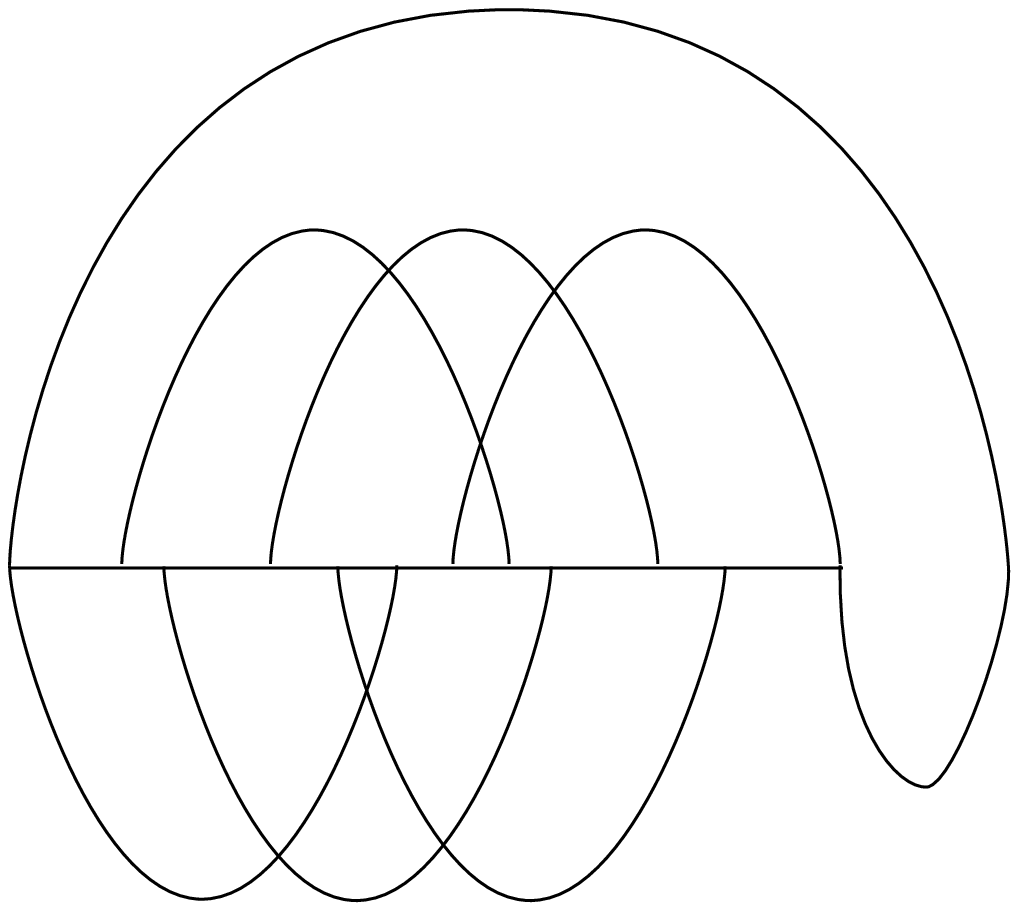, height=1truein, width=4truein} \caption{Genus 2} \label{genus2}
\end{center}
\setlength{\unitlength}{1in}
\begin{picture}(0,0)(0,0)
\end{picture}
\end{figure}
\begin{figure}[htb]
\begin{center}
\ \psfig{file=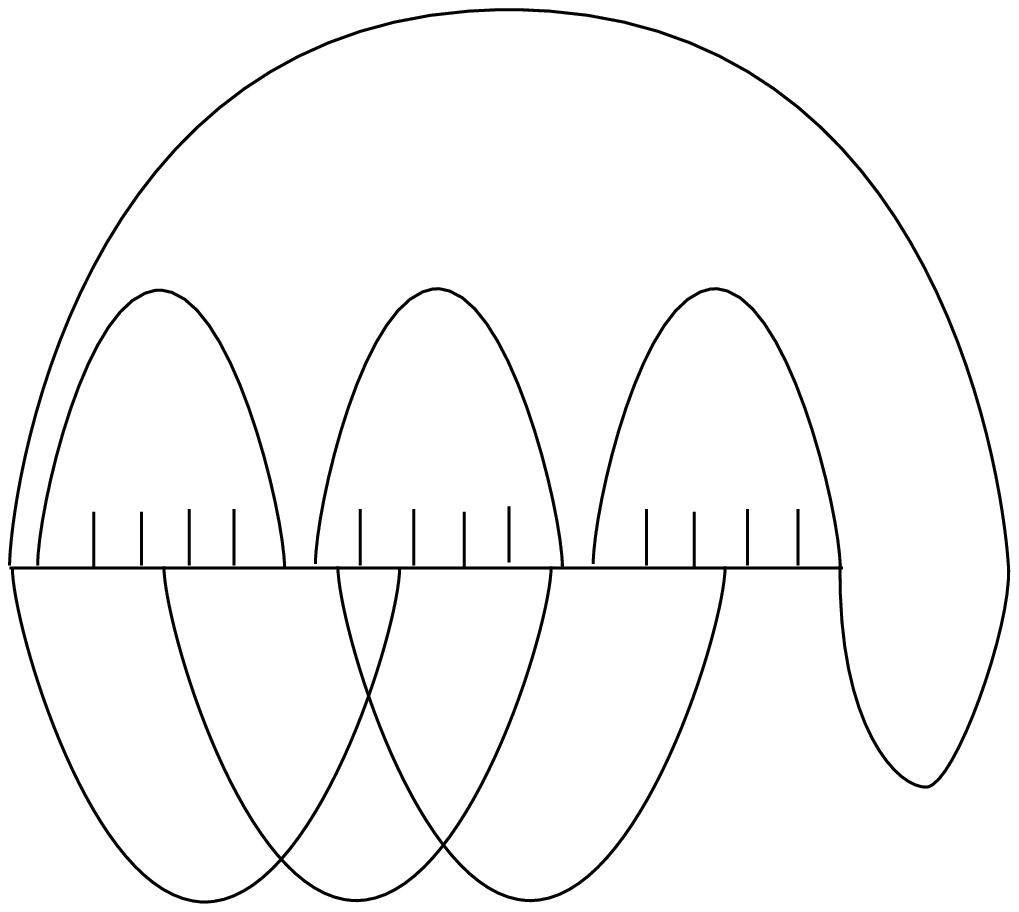, height=1.1truein, width=4truein} \caption{Genus 3} \label{genus3}
\end{center}
\setlength{\unitlength}{1in}
\begin{picture}(0,0)(0,0)
\put(-1.65,1.1){$a$}\put(-0.4,1.1){$a$}\put(-1.1,1.1){$b$} \put(-0.2,1.1){$b$} \put(-0.6,1.1){$c$} \put(0.75,1.1){$c$} \put(0,1.1){$d$} \put(0.95,1.1){$d$} \put(0.55,1.1){$e$} \put(-1.45,1.1){$e$}\put(1.1,1.1){$f$}\put(-1.3,1.1){$f$}
\end{picture}
\end{figure}
\begin{figure}[htb]
\begin{center}
\ \psfig{file=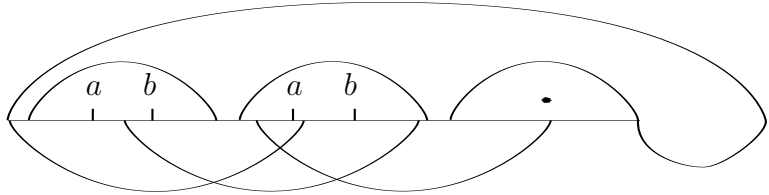, height=1truein, width=4truein} \caption{Once-punctured Genus 2} \label{genus2punc1}
\end{center}
\setlength{\unitlength}{1in}
\begin{picture}(0,0)(0,0)
\put(-1.55,1.1){$a$}\put(-0.5,1.1){$a$}\put(-1.25,1.1){$b$}  \put(-0.2,1.1){$b$}
\end{picture}
\end{figure}
\begin{figure}[htb]
\begin{center}
\ \psfig{file=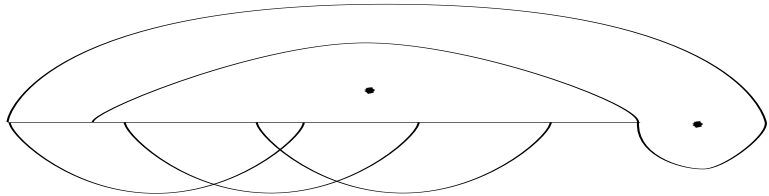, height=1truein, width=4truein} \caption{Twice-punctured Genus 1} \label{genus1punc2}
\end{center}
\setlength{\unitlength}{1in}
\begin{picture}(0,0)(0,0)
\end{picture}
\end{figure}
\begin{figure}[htb]
\begin{center}
\ \psfig{file=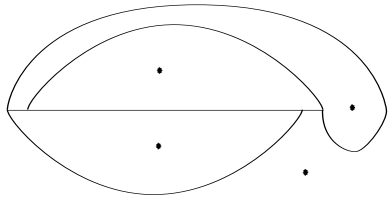, height=1truein, width=2truein} \caption{4 punctured sphere} \label{4puncsphere}
\end{center}
\setlength{\unitlength}{1in}
\begin{picture}(0,0)(0,0)
\end{picture}
\end{figure}

\subsection{Geometric Irreducibility:} From Definition 3.1 of Boissy and Lanneau \cite{Boi-Lan}, it is easy to check directly that the combinatorics of the initial non-classical interval exchange that we constructed in each case is geometrically irreducible. So by the results of \cite{Boi-Lan}, the combinatorics lies in some attractor of the Rauzy diagram. Since the attractor is connected in the directed sense, the combinatorics at each stage of the expansion, lies in the attractor. Henceforth, instead of the full Rauzy diagram, we focus just on the attractor containing our initial combinatorics, and denote it by $\overline{\cG}$.

\subsection{Terminology:} We shall denote the combinatorics constructed by $\phi_0$, and the configuration space at $\phi_0$ by $W_0$. In all cases, the spine of the union of the base interval $I$ and the band $B$ is the vertex cycle about which we have a Dehn twist sequence. We shall call the vertex cycle the {\em stable} vertex cycle and denote it by $v$. We shall denote the Dehn twist sequence by $\jmath_0$.

\subsection{Remarks about the combinatorics $\phi_0$:} \label{wind} Under any embedding of $\phi_0$ into the surface, the stable vertex cycle $v$ gives a separating curve on the surface. Secondly, suppose we fix an orientation on $v$ by running along $B$ from the bottom end to the top end and back along $I$. Let $\gamma$ be any simple closed curve carried by the interval exchange. For every intersection point $p$ of $\gamma$ with the base interval $I$, as one follows $\gamma$ from the top to the bottom through $p$, it can wind around $v$ only in the positive direction. This property gives important consequences for the sub-surface projection to the annulus with core curve $v$, as we shall see later.

\subsection{The Lebesgue measure estimate for iterations of the Dehn twist sequence:} Thought of as a directed path in $\overline{\cG}$, the Dehn twist sequence $\jmath_0$ returns us to the same vertex $\phi_0$. The matrix $Q_0$ associated to $\jmath_0$ has the effect that it adds the column $e_B$ twice to all columns $e_\alpha$ associated to bands $\alpha$ on top. Let $n \jmath_0$ denote the splitting sequence $\jmath_0 \ast \jmath_0 \ast \cdots \ast \jmath_0$ i.e. the sequence given by iterating $\jmath_0$, $n$ successive times. Let $Q_n$ be the matrix associated to it i.e. $Q_n = (Q_0)^n$. We have the estimate

\begin{proposition}\label{Dehn}
There exists a positive integer $j$ such that $\ell(JQ_n(W_0)) \approx 1/n^j$ i.e. there is a constant $a_0>1$ such that
\begin{equation}\label{vol}
\frac{1}{a_0 n^j} < \ell(JQ_n(W_0)) < \frac{a_0}{n^j}
\end{equation}
for all $n$.
\end{proposition}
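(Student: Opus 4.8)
The plan is to analyze the matrix $Q_n = (Q_0)^n$ directly and estimate the Lebesgue measure of $JQ_n(W_0)$ by integrating the Jacobian $\J(JQ_n)$ over $W_0$. First I would pin down the exact form of $Q_0$. Since $\jmath_0$ splits the band $B$ twice by every band on top, the matrix $Q_0$ is the identity plus twice the sum of the rank-one matrices $M_{\alpha B}$ over bands $\alpha$ on top (using the convention of Equation~\eqref{width-rel}); that is, $Q_0$ fixes every column $e_\alpha$ for $\alpha$ not on top and sends $e_\alpha \mapsto e_\alpha + 2 e_B$ for $\alpha$ on top, while $e_B$ itself is unchanged (because $B$ is orientation preserving and does not appear among the top bands in the way the $\alpha$'s do — this uses the description in Section~\ref{Combinatorics} that after the Dehn twist sequence the base interval becomes the top end of $B$). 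Consequently $Q_0$ is unipotent: $(Q_0 - I)^2 = 0$, so $Q_n = Q_0^n = I + n(Q_0 - I)$. Thus $Q_n$ has exactly the effect of adding $2n\,e_B$ to each top column, and $|Q_n \y| = |\y| + 2n \sum_{\alpha \text{ on top}} y_\alpha$ for $\y$ with nonnegative coordinates.

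Next I would use the Jacobian formula. As in the classical case, the Jacobian of the projective linear map $JQ_n$ restricted to the relevant configuration space $W$ is comparable to $1/|Q_n \y|^{\dim W + 1} = 1/|Q_n \y|^{d-2}$ (the configuration space $W$ has dimension $d-2$ because of the one switch condition cutting $\Delta$ down by one), up to a bounded multiplicative constant coming from the restriction to the codimension-one subspace — this is exactly the kind of estimate invoked before Lemma~\ref{nciem-control}. Since $\jmath_0$ returns to $\phi_0$, here $W = W_0$, and
\[
\ell(JQ_n(W_0)) \asymp \int_{W_0} \frac{d\ell(\y)}{|Q_n \y|^{d-2}} = \int_{W_0} \frac{d\ell(\y)}{\bigl(1 + 2n\,\sigma(\y)\bigr)^{d-2}},
\]
where $\sigma(\y) = \sum_{\alpha \text{ on top}} y_\alpha$ and I have used $|\y| = 1$ on $W_0$. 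The function $\sigma$ is an affine function on the simplex $W_0$ that vanishes exactly at the face where all top-band weights are zero (equivalently, at the stable vertex cycle $v$), and is strictly positive elsewhere; the key geometric input is that this vanishing locus is a face of positive codimension, say codimension $j$ inside $W_0$, determined purely by the combinatorics $\phi_0$. Then a standard computation — slice $W_0$ by the level sets $\{\sigma = t\}$, note that the $\ell$-measure of $\{\sigma \le t\}$ grows like $t^{j}$ for small $t$ because $\sigma$ vanishes to first order transverse to a codimension-$j$ face — gives
\[
\int_{W_0} \frac{d\ell(\y)}{(1 + 2n\,\sigma(\y))^{d-2}} \asymp \int_0^{C} \frac{t^{j-1}\,dt}{(1+2nt)^{d-2}} \asymp \frac{1}{n^{j}},
\]
valid for all $n \ge 1$ (for $d - 2 > j$, which will hold for the surfaces in question; if not, one truncates and still gets $1/n^j$ with a possible log which one absorbs by adjusting — but I expect the combinatorics forces $d-2 \ge j+1$), yielding the two-sided bound in Equation~\eqref{vol} with an explicit $a_0$ depending only on $\phi_0$.

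I expect the main obstacle to be identifying the precise codimension $j$ of the zero locus of $\sigma$ in $W_0$ and checking that $\sigma$ really does vanish exactly to first order there. Concretely, $j$ equals the number of independent linear conditions ``$y_\alpha = 0$ for all top bands $\alpha$'' impose on $W_0$, modulo the switch condition already built into $W_0$; one must verify this is a genuine face of the polytope $W_0$ (so the integral near it behaves like a model corner integral) rather than something degenerate. This is a finite combinatorial check case-by-case against the explicit interval exchanges built in Section~\ref{Combinatorics}, but it is where all the content sits — once $j$ is correctly identified the measure estimate is the routine corner-integral computation sketched above. A secondary point to be careful about is justifying the comparability constant in $\ell(JQ_n(W_0)) \asymp \int_{W_0} |Q_n\y|^{-(d-2)}\,d\ell$: the Jacobian of $JQ_n$ on the affine slice $W_0$ differs from the full-simplex Jacobian $\J_\Delta$ by a factor involving the angle between $Q_n(\text{slice})$ and the radial direction, and one needs this factor bounded above and below independently of $n$, which follows because $Q_n$ is unipotent with a single ``direction'' $e_B$ being added and the configuration space stays uniformly transverse to that direction.
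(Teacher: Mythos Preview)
Your Jacobian-integral approach is conceptually sound but contains a concrete error and misses the key structural simplification that makes the paper's argument almost immediate.

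\textbf{The exponent slip.} You write that the restricted Jacobian is comparable to $1/|Q_n\y|^{\dim W + 1} = 1/|Q_n\y|^{d-2}$, having just said $\dim W_0 = d-2$. But $\dim W_0 + 1 = d-1$, not $d-2$. With the correct exponent $d-1$ the model integral $\int_0^1 u^{d-3}(1+nu)^{-(d-1)}\,du$ is $\asymp n^{-(d-2)}$ with no logarithm; with your exponent $d-2$ you would pick up a $\log n$, and a logarithmic factor cannot be ``absorbed by adjusting'' into a genuine two-sided bound of the form \eqref{vol}.

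\textbf{The zero locus of $\sigma$.} You leave the codimension $j$ of $\{\sigma=0\}\cap W_0$ as a case-by-case combinatorial check. In fact no check is needed: by the construction in Section~\ref{Combinatorics}, $B$ is the \emph{only} orientation-preserving band, so every vertex of $W_0$ other than $e_B$ is a midpoint $e_{\alpha\beta}$ with $\alpha$ an orientation-reversing band on top, hence has $\sigma(e_{\alpha\beta})=1/2$. Thus $\sigma$ vanishes on $W_0$ only at the single vertex $e_B$, and $j = \dim W_0 = d-2$ uniformly over all cases. This also explains why the convergence worry ``$d-2>j$'' never materializes once the exponent is corrected to $d-1$.

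\textbf{Comparison with the paper.} The paper bypasses the Jacobian integral entirely. Since every non-$e_B$ vertex of $W_0$ has the \emph{same} value $\sigma=1/2$, the projective map $JQ_n$ sends each $e_{\alpha\beta}$ to $f_{\alpha\beta}=\tfrac{n}{n+1}e_B+\tfrac{1}{n+1}e_{\alpha\beta}$ and fixes $e_B$; hence the image polytope $JQ_n(W_0)$ is exactly the homothetic copy of $W_0$ scaled toward $e_B$ by the factor $1/(n+1)$, and its $(d-2)$-volume is $(n+1)^{-(d-2)}\,\ell(W_0)$. This gives $j=d-2$ in one line. Your integral method would recover the same answer once the exponent is fixed and the zero locus is identified, but the special structure (all non-apex vertices at a common $\sigma$-level) makes the direct vertex computation far simpler here.
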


\begin{proof}
The band $B$ is the only orientation preserving band. This means that $W_0$ is a cone to $e_B$ of a convex subset of the face $F_B$ of $\Delta$ opposite $e_B$. For every pair of orientation reversing bands $\alpha$ on top and $\beta$ on bottom there is a vertex $e_{\alpha \beta}$ of $W_0$, where $e_{\alpha \beta}$ is the midpoint of the edge joining the vertices $e_\alpha$ and $e_\beta$ of the standard simplex $\Delta$. Thus $W_0$ is cone to $e_B$ of the convex hull of the vertices $e_{\alpha \beta}$. A schematic picture of $W_0$ is shown in Figure~\ref{dehntwistestimate}.
\begin{figure}[htb]
\begin{center}
\ \psfig{file=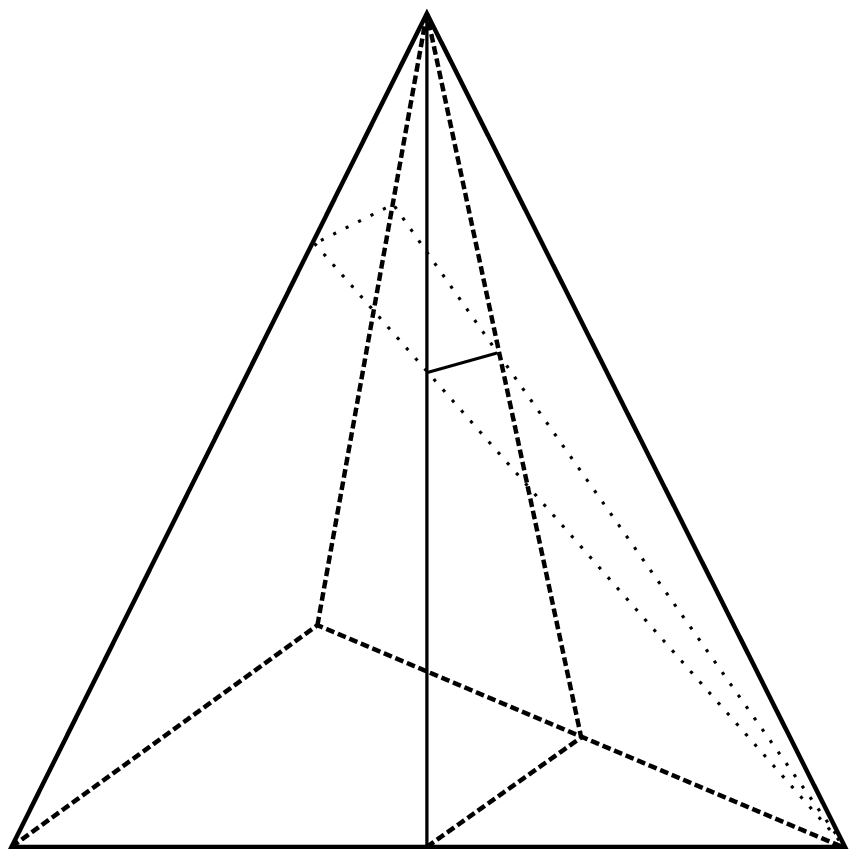, height=2truein, width=2.5truein} \caption{} \label{dehntwistestimate}
\end{center}
\setlength{\unitlength}{1in}
\begin{picture}(0,0)(0,0)
\put(0,2.7){$e_B$}\put(0,0.5){$e_{\alpha \beta}$}\put(0.5,1){$e_{\alpha \gamma}$}\put(-0.2,1.6){$f_{\alpha \beta}$}\put(0.3,1.8){$f_{\alpha \gamma}$}
\end{picture}
\end{figure}

The subset $JQ_n(W_0)$ of $W_0$ is a convex set. Hence, our goal is to identify the vertices of $JQ_n(W_0)$. For $\alpha$ on top, the column $Q_n(\alpha)= e_\alpha + (2n) e_B$. All other columns of $Q_n$ are the same as the corresponding columns of the identity matrix. This means, first, that $e_B$ is a vertex of $JQ_n(W_0)$ and second, as shown in Figure~\ref{dehntwistestimate}, on every side of $W_0$ that joins $e_B$ to some $e_{\alpha \beta}$, there is a vertex $f_{\alpha \beta}$ of $JQ_n(W_0)$, whose linear combination is
\[
f_{\alpha \beta} = \frac{n}{n+1} e_B + \frac{1}{n+1}e_{\alpha \beta}
\]
The subset $JQ_n(W_0)$ is the cone to $e_B$ of the convex hull of the vertices $f_{\alpha \beta}$. From the linear combination it is clear that for $j= d-2$, where $d$ is the total number of bands, there is a constant $a_0>1$ such that the $(d-2)$-volume of $JQ_n(W_0)$ is related to the $(d-2)$-volume of $W_0$ by Estimate~\eqref{vol}.
\end{proof}

\section{The complex of curves}\label{Curvecomplex}

With the exception of the torus, once-punctured torus and the 4-punctured sphere, the curve complex $\cC(\Sigma)$ of a surface $\Sigma$ is a locally infinite simplicial complex whose vertices are the isotopy classes of essential, non-peripheral, simple closed curves on $\Sigma$. A collection of vertices span a simplex if there are representatives of the curves that can be realized disjointly on the surface. In the low complexity examples that are the exceptions the definition is modified: two vertices are connected by an edge if there are representatives of the simple closed curves intersecting minimally. The mapping class group acts on the curve complex in the obvious way. For detailed background on the curve complex, see the influential paper \cite{Mas-Min1} by Masur and Minsky. Alternatively, see Bowditch \cite{Bow}.

Of primary interest is the coarse geometry of $\cC(\Sigma)$. The curve complex $\cC(\Sigma)$ is quasi-isometric to its 1-skeleton with the path metric on it. The 1-skeleton is a locally infinite graph with infinite diameter. The random walk on the mapping class group can be projected to this graph by using the group action.

Masur and Minsky \cite{Mas-Min1} showed that the curve complex is $\delta$-hyperbolic. For $\delta$-hyperbolic spaces, it is possible to construct a natural boundary at infinity, called the Gromov boundary. Roughly speaking, points on the Gromov boundary correspond to equivalence classes of infinite geodesic rays under a suitable equivalence relation. It was shown by Klarreich \cite{Kla} that the Gromov boundary $\partial \cC(\Sigma)$ is the space $\fmin$ of {\em minimal} foliations on the surface i.e. foliations on $\Sigma$ that have a non-zero intersection number with every simple closed curve. See also \cite{Ham}.

There is a coarse distance non-increasing map $q$ from the \teichmuller space $T(\Sigma)$ to the curve complex $\cC(\Sigma)$ defined as follows: A point in \teichmuller space gives a marked hyperbolic structure on $\Sigma$. The map $q$ is defined by sending the point to the shortest curve or the {\em systole} in this hyperbolic metric. If there is more than one shortest curve, then it turns out that the set of shortest curves has bounded diameter in $\cC(\Sigma)$, where the bound depends only on the topology of $\Sigma$. So picking one of the curves defines $q$ coarsely. It is clear from the definition that $q$ is coarsely equivariant with respect to the mapping class group action.

Here is another way to set up the definition of $q$: For a constant $\epsilon > 0$ and a simple closed curve $\alpha$, define the $\epsilon$-thin part of $T(\Sigma)$ corresponding to $\alpha$, to be those points in $T(\Sigma)$ for which the geodesic representative of $\alpha$ has length less than $\epsilon$. If $\epsilon$ is smaller than a universal constant, the intersection pattern of the thin parts corresponding to different simplex closed curves, is modeled by the curve complex. The thin parts have unbounded diameter in the \teichmuller metric, but one can form the {\em electrified} \teichmuller space $T_{el}(\Sigma)$ by adding an extra point for each thin part, and connecting every point in that thin part to it's new point by an edge of length 1. This has the effect of collapsing the thin parts to bounded diameter. There is an obvious inclusion of $T(\Sigma)$ into $T_{el}(\Sigma)$. The electrified \teichmuller space $T_{el}(\Sigma)$ is quasi-isometric to the curve complex, giving us another definition of the map $q$. See \cite{Mas-Min1}, Lemma 3.1.

The precise statement of Klarreich's theorem \cite{Kla} is

\begin{theorem}\label{Klarreich}
The inclusion map $q: T(\Sigma) \to T_{el}(\Sigma)$ extends continuously to the subset $\pmf_{min}$ of projective classes of measured minimal foliations, to give a map $\partial q: \pmf_{min} \to \partial T_{el}(\Sigma)$. The map $\partial q$ is surjective and $\partial q(F) = \partial q(G)$ if and only if $F$ and $G$ are topologically equivalent minimal foliations. Moreover, the image under $q$ of any sequence $X_n$ in $T(\Sigma)$ that converges to a point in $\pmf \setminus \pmf_{min}$ cannot accumulate at any point of $\partial T_{el}(\Sigma)$.
\end{theorem}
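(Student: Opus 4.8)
By the quasi-isometry of Masur--Minsky between the electrified \teichmuller space $T_{el}(\Sigma)$ and the curve complex $\cC(\Sigma)$, the two spaces have the same Gromov boundary, so it is enough to identify $\partial\cC(\Sigma)$ with the set of topological (that is, unmeasured) equivalence classes of minimal foliations and to check that $\partial q$ realizes this identification. The plan is to study how the coarse systole map $q$ transports \teichmuller geodesic rays into $\cC(\Sigma)$. By the work of Masur and Masur--Minsky, $q$ sends \teichmuller geodesics to uniform unparametrized quasigeodesics in $\cC(\Sigma)$, and these enjoy a strong contraction property; consequently a \teichmuller ray $r_F$, issuing from a fixed basepoint $X\in T(\Sigma)$ with vertical foliation $F$, either has bounded image in $\cC(\Sigma)$, or its image is an unbounded unparametrized quasigeodesic, which then converges to a point of $\partial\cC(\Sigma)$. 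The theorem reduces to showing that which alternative occurs, and what the limit is, depends only on the topological type of $F$.

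First I would establish this dichotomy. If $F$ is minimal, then by Masur's analysis of \teichmuller geodesics together with Minsky's product-region theorem the image $q(r_F)$ is unbounded; being an unparametrized quasigeodesic, it converges to a point $\partial q(F)\in\partial\cC(\Sigma)$, and this defines $\partial q$ on $\pmf_{min}$. The same bundle of extremal-length and collar estimates gives the last assertion of the theorem and completes the dichotomy: if $F\in\pmf\setminus\pmf_{min}$, choose a curve $\alpha$ with $i(F,\alpha)=0$; then along any sequence $X_n\to[F]$ in Thurston's compactification --- in particular along $r_F$ itself --- the systole of $X_n$ must eventually have bounded intersection number with $\alpha$, hence bounded distance from $\alpha$ in $\cC(\Sigma)$, so the image cannot accumulate on $\partial\cC(\Sigma)$.

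Next I would identify the fibers of $\partial q$. Whether a curve $\alpha$ becomes short somewhere along $r_F$ depends, through the extremal-length estimates along \teichmuller geodesics, only on whether the normalized intersection number $i(F,\alpha)$ is small, which is a condition on the underlying unmeasured foliation alone. Hence if $F$ and $G$ are topologically equivalent minimal foliations, $r_F$ and $r_G$ pass near the same curves at comparable scales, and the contraction property forces $q(r_F)$ and $q(r_G)$ to stay at bounded Hausdorff distance, so $\partial q(F)=\partial q(G)$. Conversely, if $F$ and $G$ are minimal but topologically distinct, one produces curves $\alpha_n$ with normalized $i(F,\alpha_n)\to 0$ while $i(G,\alpha_n)$ stays bounded below; these get arbitrarily short along $r_F$ but not along $r_G$, so $d_{\cC}\bigl(q(r_F(t)),q(r_G(s))\bigr)\to\infty$ and $\partial q(F)\neq\partial q(G)$. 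Thus the fibers of $\partial q$ are exactly the topological equivalence classes.

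Finally I would prove surjectivity and continuity. Given $\xi\in\partial\cC(\Sigma)$, choose curves $\alpha_n$ with $[\alpha_n]\to\xi$ and points $X_n\in T(\Sigma)$ at which $\alpha_n$ is a systole, so $q(X_n)\to\xi$; passing to a subsequence, $X_n\to[F]$ in Thurston's compactification, and by the non-accumulation statement $F$ must be minimal. A limiting argument on the \teichmuller geodesics $[X,X_n]$, whose $q$-images are uniform unparametrized quasigeodesics converging to $\xi$, then identifies $\partial q(F)$ with $\xi$; the same uniform bounds yield continuity of the combined map $q\cup\partial q$ on $T(\Sigma)\cup\pmf_{min}$. \emph{The main obstacle} throughout is the fine interaction between \teichmuller geodesics and $\cC(\Sigma)$: one needs the uniform unparametrized-quasigeodesic and contraction estimates of Masur--Minsky, and then a genuinely careful treatment of non-uniquely-ergodic minimal foliations, where the transverse measure is not pinned down by the topological type, so that distinct projective measured foliations with the same support must nonetheless be shown to converge to the same boundary point; handling the surjectivity step when the \teichmuller geodesics $[X,X_n]$ need not converge to a single ray is the other point at which real work, rather than bookkeeping, is required.
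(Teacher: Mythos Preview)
The paper does not prove this theorem: it is quoted as Klarreich's theorem with a citation to \cite{Kla} (see also \cite{Ham}), and no proof is given in the paper itself. So there is no ``paper's own proof'' to compare your proposal against.

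That said, your sketch is broadly in line with the strategy of Klarreich's original argument: use the Masur--Minsky quasi-isometry between $T_{el}(\Sigma)$ and $\cC(\Sigma)$, analyze images of \teichmuller geodesics as unparametrized quasigeodesics, and separate the minimal versus non-minimal cases via intersection-number/extremal-length estimates. The places you flag as the real work --- handling non-uniquely-ergodic minimal foliations and the surjectivity/limiting step --- are indeed where the substance lies, and your sketch does not actually carry them out. In particular, the claim that ``whether a curve becomes short along $r_F$ depends only on the underlying unmeasured foliation'' needs a genuine argument when $F$ is not uniquely ergodic, and the convergence of $q$-images of segments $[X,X_n]$ to a ray representing $\xi$ requires more than the uniform quasigeodesic bound. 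If you intend this as a citation rather than a proof, simply referring to \cite{Kla} (or \cite{Ham}) is what the paper does and is appropriate here.
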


In fact, image by $q$ of a \teichmuller geodesic is an un-parameterized quasi-geodesic in $\cC(\Sigma)$. The map $\partial q$ is a quotient map in the sense that it takes a measured minimal foliation and forgets the measure. This quotient map is a bijection when restricted to the set $\ue$ of the uniquely ergodic foliations.

An immediate consequence of Klarreich's theorem and Theorem~\ref{Poisson} of Kaimanovich and Masur is the theorem (See \cite{Mah1}, Theorem 5.1)

\begin{theorem}\label{poisson}
Let $\mu$ be an initial distribution on the mapping class group $G$, such that the subgroup generated by the support of $\mu$ is non-elementary. Then, for any base point $\bb$ in $\cC(\Sigma)$ and almost every sample path $\p$, the sequence $\omega_n \bb$ converges to a point in $\partial \cC(\Sigma) = \fmin$. Let $\nu$ denote the induced harmonic measure on $\fmin$. Then, the measure $\nu$ is the push-forward under $\partial q$ of the induced harmonic measure on $\pmf$.
\end{theorem}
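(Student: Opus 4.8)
The plan is to transport the Kaimanovich--Masur convergence statement from the Thurston compactification of $T(\Sigma)$ to the Gromov compactification of $\cC(\Sigma)$ by means of the systole map $q$ together with Klarreich's Theorem~\ref{Klarreich}, and then to identify the hitting measure on $\partial\cC(\Sigma)$ as a push-forward.

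First I would fix a point $X \in T(\Sigma)$. By Theorem~\ref{Poisson}, for $\mathbb{P}$-almost every sample path $\p = \{\omega_n\}$ the sequence $\omega_n X$ converges in $\pmf$ to a uniquely ergodic foliation $F(\p) \in \ue$, and the distribution of $F(\p)$ is the harmonic measure on $\pmf$, which I will call $\nu_{\pmf}$. Since $\ue \subset \pmf_{min}$ and $q$ extends continuously over $\pmf_{min}$ by Theorem~\ref{Klarreich}, the image sequence $q(\omega_n X)$ converges, in $\cC(\Sigma) \cup \partial\cC(\Sigma)$, to the point $\partial q(F(\p)) \in \partial\cC(\Sigma) = \fmin$; here one uses that the quasi-isometry between $T_{el}(\Sigma)$ and $\cC(\Sigma)$ carries convergence in $\overline{T_{el}(\Sigma)}$ to convergence in the Gromov compactification of $\cC(\Sigma)$. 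Next, since $q$ is coarsely $G$-equivariant there is a constant $C = C(\Sigma)$ with $d_{\cC}(q(\omega_n X), \omega_n q(X)) \leqslant C$ for all $n$, and since mapping classes act on $\cC(\Sigma)$ by isometries, $d_{\cC}(\omega_n q(X), \omega_n \bb) = d_{\cC}(q(X), \bb)$, a constant $D$ independent of $n$. Hence $d_{\cC}(q(\omega_n X), \omega_n \bb) \leqslant C + D$ uniformly in $n$.

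Now I would invoke the elementary fact that a sequence staying a bounded distance from a sequence converging to a point $\xi$ of the Gromov boundary itself converges to $\xi$: if $d_{\cC}(x_n, y_n) \leqslant R$ and $x_n \to \xi$, the Gromov products obey $(y_m \cdot y_n)_{\bb} \geqslant (x_m \cdot x_n)_{\bb} - 2R \to \infty$, so $\{y_n\}$ converges to $\xi$ too. Applying this with $x_n = q(\omega_n X)$ and $y_n = \omega_n \bb$ shows that for almost every $\p$ the sequence $\omega_n \bb$ converges, with limit $\partial q(F(\p)) \in \fmin$ that does not depend on the choice of base point $\bb$. Consequently the harmonic measure $\nu$ on $\fmin$, namely the distribution of $\lim_n \omega_n \bb$, is well defined, and since $\lim_n \omega_n \bb = \partial q(F(\p))$ almost surely while $F$ has distribution $\nu_{\pmf}$, we obtain $\nu = (\partial q)_\ast \nu_{\pmf}$, as claimed. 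Because $\nu_{\pmf}$ gives full mass to $\ue$, on which $\partial q$ is a bijection onto its image, one may equally regard $\partial q$ here as the map that forgets the transverse measure.

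The argument is essentially formal once Theorems~\ref{Poisson} and~\ref{Klarreich} are granted; the one point needing care is the interface between the two compactifications --- the Thurston boundary of $T(\Sigma)$ on the one hand and the Gromov boundary of $\cC(\Sigma)$ on the other --- together with the facts that Klarreich's continuity is asserted only along approaches to $\pmf_{min}$ and that $q$ is merely coarsely defined and coarsely equivariant. It is precisely for these reasons that it matters that $\nu_{\pmf}$ is supported on $\ue \subset \pmf_{min}$ and that the bounded-perturbation lemma for Gromov sequences is used to absorb the coarseness of $q$. This argument is due to Maher; see \cite{Mah1}, Theorem~5.1.
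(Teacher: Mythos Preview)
Your proof is correct and follows the same route as the paper: invoke Kaimanovich--Masur to get convergence in $\pmf$ to a point of $\ue\subset\pmf_{min}$, then push through Klarreich's theorem and the quasi-isometry $T_{el}(\Sigma)\simeq\cC(\Sigma)$ to obtain convergence in $\partial\cC(\Sigma)$, concluding that the hitting measure on $\fmin$ is $(\partial q)_\ast$ of the harmonic measure on $\pmf$. The only cosmetic difference is that the paper disposes of the base-point issue by simply choosing $X\in T(\Sigma)$ with $q(X)=\bb$, whereas you keep $X$ and $\bb$ independent and absorb the discrepancy via the bounded-perturbation lemma for Gromov sequences; your version is a bit more explicit but not substantively different.
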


\begin{proof}
By Theorem~\ref{Poisson} of Kaimanovich and Masur, for any base-point $X$ in $T(\Sigma)$ and for almost every sample path $\p= \{\omega_n\}$, the sequence $\omega_n X$ in $T(\Sigma)$ converges to a uniquely ergodic foliation in $\pmf$. Uniquely ergodic foliations are minimal, so by Klarreich's theorem the image in $T_{el}(\Sigma)$ of the sequence $\omega_n X$ converges to the same foliation in $\partial \cC(\Sigma)$. Here, we have used the quasi-isometry between $T_{el}(\Sigma)$ and $\cC(\Sigma)$ and chosen $X$ such that the image of $X$ in $\cC(\Sigma)$ is $\bb$. The random walk thus projected defines a harmonic measure on $\partial \cC(\Sigma)$. The quotient map $\partial q: \fmin \to \partial \cC(\Sigma)$ is injective on $\ue$. So the harmonic measure on $\cC(\Sigma)$ is a push-forward of the harmonic measure $\nu$ on $\pmf$, and we shall continue to call it $\nu$.
\end{proof}

\section{Subsurface projections}\label{Projections}

For details about subsurface projections, see \cite{Mas-Min2} or \cite{Bow}. Let $Y$ be an essential connected subsurface of $\Sigma$, not a thrice punctured sphere or an annulus. Define a subsurface projection map $\pi$ from $\cC(\Sigma)$ to $\cC(Y)$ as follows: intersect a simple closed curve $x$ with the subsurface $Y$, and let $N(x,Y)$ be a regular neighborhood in $Y$ of the union of $x \cap Y$ and $\partial Y$. Set $\pi(x)$ to be one of the components of $\partial N(x,Y)$, provided the component chosen is not parallel to a component of $\partial Y$. Strictly speaking, this is defined only when the curve intersects $Y$ essentially, but the set of curves $S_Y$ that do not, has diameter 2 in $\cC(\Sigma)$. There is also a choice involved in selecting the component. However, the different choices result in a set of vertices, whose diameter in $\cC(Y)$ is bounded above by a constant that depends only on the topology of $\Sigma$. This gives a coarse definition of the map $\pi$. Since $\pi$ fails to be defined only on a bounded set $S_Y$, it extends to the boundary $\partial \cC(\Sigma)$. The mapping class group of $Y$ is a subgroup of the mapping class group of $\Sigma$ in a natural way, and the map $\pi$ is equivariant with respect to its action.

It requires more care to set up the definition of the subsurface projection to an essential annulus $A$ with core curve $x$. The goal is to define a complex $\cC(x)$ associated to $A$, and then define the subsurface projection map $\pi: \cC(\Sigma) \to \cC(x)$ such that $\pi$ records the twisting of a curve around $x$. For this, one would like $\cC(x)$ to be simply $\Z$, but there is no natural way to do this. Instead, let $\widetilde{\Sigma}$ be the annulus cover of $\Sigma$ corresponding to $A$. This cover is a quotient of $\H^2$ in a natural way. So one obtains a natural compactification $\widehat{\Sigma}$ of $\widetilde{\Sigma}$ from the compactification of $\H^2$ to the closed disk. Define $\cC(x)$ as follows: Let the vertices of $\cC(x)$ be paths connecting the two boundary components of $\widehat{\Sigma}$ modulo homotopies that fix the endpoints. Two vertices are connected by an edge if there are representatives of the vertices with no intersection in $\widetilde{\Sigma}$. Fixing an orientation of $\Sigma$ and ordering the components of $\partial \widehat{\Sigma}$, we can define an algebraic intersection number of two vertices $u$ and $v$, denoted by $u \cdot v$. For distinct vertices $u$ and $v$ of $\cC(x)$, the distance in $\cC(x)$ between $u$ and $v$ can be shown to be $1+ \vert u \cdot v \vert$. Moreover, after fixing a vertex $u \in \cC(\Sigma)$, it can be checked that the map $v \to v \cdot u$ gives a quasi-isometry between $\cC(x)$ and $\Z$. In addition, the quasi-isometry constants are independent of the choice $u$. To define the subsurface projection $\pi(y)$ of a curve $y$ intersecting $A$ essentially, consider a lift $\widehat{y}$ in $\widehat{\Sigma}$ and choose a component of this lift running from one boundary component of $\widehat{\Sigma}$ to the other. The set of various components is of finite diameter in $\cC(x)$ and so the subsurface projection map $\pi$ to $\cC(x)$ is coarsely well-defined. Finally the map $\pi$ has the property that if $D_x$ denotes the Dehn twist about $x$, then
\begin{equation}\label{equivariance}
d_{\cC(x)}(\pi(D_x^n(y)),\pi(y)) = 2 + |n|
\end{equation}
Thus, defining $\pi$ this way achieves the desired property of recording the twisting around $x$. There is a natural $\Z$ action on $\cC(x)$ by Dehn twisting around the core curve of the annular cover $\widehat{\Sigma}$. The group $\Z$ also has an inclusion into the mapping class group of $\Sigma$ as Dehn twists around $x$, and so it acts on $\cC(\Sigma)$ through this inclusion. The subsurface projection map $\pi$ is coarsely equivariant with respect to the $\Z$ action on $\cC(\Sigma)$ and $\cC(x)$.

\subsection{The relative space:}
For technical reasons related to the proof of Theorem~\ref{exp-decay}, we want to define sub-surface projections for the mapping class group itself. For this purpose, we introduce two spaces: the {\em marking complex} and the {\em relative space}. The marking complex is quasi-isometric to the mapping class group itself. The relative space is obtained by electrifying the mapping class group or alternatively, the marking complex because of the quasi-isometry between the two, and is quasi-isometric to the curve complex.

Suppose $G$ is a group with a finite symmetric generating set $A$. For each group element $g$, one defines the word length of $g$ with respect to $A$ as the length of the shortest word in the generators representing $g$. This length function defines a left invariant metric on $G$ which we call the {\em word metric}. The word metric can be recovered from a graph associated to $(G,A)$ called the {\em Cayley graph}. The vertices of the Cayley graph are the group elements, and two elements $g$ and $h$ are connected by an edge of length 1 if $g^{-1}h$ is a generator. The word metric on $G$ is simply the path metric on the Cayley graph. It is easy to check that different choices of finite symmetric generating sets result in Cayley graphs that are quasi-isometric to each other.

Given a group $G$ with a set of generators $A$, and a collection of subgroups $\mathcal{H}=\{ H_i\}$, the {\em relative} length of a group element $g$ is defined to be the length of the shortest word representing $g$ in the (typically infinite) set of generators $\mathcal{H} \cup A$. This defines a metric on $G$ called the {\em relative metric}. We shall denote $G$ with this metric by $\widehat{G}$, and call this the {\em relative space}.

Now let $G$ be the mapping class group of $\Sigma$. It is a classical theorem that the mapping class group is finitely generated. We shall fix a favorite set of symmetric generators for $G$ once and for all. The metric on $G$ shall be implicitly assumed to be the word metric with respect to this chosen set.

We can consider the relative metric on $G$ with respect to the following collection of subgroups: There are finitely many orbits of simple non-peripheral closed curves in $\Sigma$ under the action of the mapping class group. Let $\{ a_1, \ldots, a_r\}$ be a list of representatives of these orbits, and let $H_i$ be the subgroup of the mapping class group that fixes $a_i$.

It was shown by Masur and Minsky \cite{Mas-Min1}, that the resulting relative space $\widehat{G}$ for the mapping class group is quasi-isometric to $\cC(\Sigma)$.

\subsection{The marking complex:} Let $\{ x_1, \ldots, x_n \}$ be a simplex in $\cC(\Sigma)$ i.e. the set $\{x_1 \ldots, x_n\}$ is a set of disjoint simple closed curves in $\Sigma$. A {\em marking} in $\Sigma$ is a set $m= \{ p_1, \ldots, p_n \}$, where either $p_i = x_i$, or $p_i$ is a pair $(x_i, t_i)$, where $t_i$ is a diameter-one set of vertices of the complex $\cC(x_i)$ associated to the annulus with core curve $x_i$. The $x_i$ are called the {\em base curves} of the marking and the $t_i$, when defined, are called the transversals. A marking $m$ is {\em complete} if the simplex formed by the base is a maximal simplex in $\Sigma$ and if every curve $x_i$ has a transversal. In other words, a complete marking consists of a pants decomposition of $\Sigma$ along with the choice of a transversal to each cuff in the pants decomposition.

Given $x \in \cC(\Sigma)$, a {\em clean transverse curve} for $x$ is a curve $y$ such that a regular neighborhood $F$ for $x \cup y$, is either a once punctured torus or a four times punctured sphere, and $x$ and $y$ are adjacent in the curve complex $\cC(F)$ of $F$. A marking is {\em clean} if every $p_i$ is a pair of the form $(x_i, \pi_{x_i}(y_i) )$, where $y_i$ is a clean transverse curve for $x_i$ disjoint from all other curves in $\mathit{base}(m)$.

Complete clean markings on $\Sigma$ can be made into an infinite graph $M(\Sigma)$, by adding edges corresponding to the following {\em elementary moves}: Consider a marking $m$ and a base curve $x$ of $m$. A marking $m'$ is obtained from $m$ by a {\em twist} move about $x$ if $\mathit{base}(m) = \mathit{base}(m')$, the transversals for all base curves except $x$ are the same, and if $t(x)$ and $t'(x)$ are the transversals to $x$ in the markings $m$ and $m'$ respectively, then $d_{\cC(x)}(t(x),t'(x)) \leqslant  2$. In other words, the transversal to $x$ in $m'$ has a projection to $\cC(x)$ that differs from the projection of the transversal to $x$ in $m$, by at most two Dehn twists about $x$. A marking $m'$ is obtained from a marking $m$ by a {\em flip move} along $x$ if there exists $x'$ in $\mathit{base}(m')$ such that $\mathit{base}(m') \setminus \{x\} = \mathit{base}(m) \setminus \{x'\}$, a regular neighborhood $F$ of $x \cup x'$ is either a 1-holed torus or a 4-holed sphere, and $d_{\cC(F)}(x,x') = 1$, and for the respective transversals $t(x)$ and $t(x')$ to $x$ and $x'$ in $m$ and $m'$, we have $d_{\cC(x)}(t(x), \pi_x(x')) \leqslant  2$ and $d_{\cC(x')}(t(x'), \pi_{x'}(x)) \leqslant  2$, where $\pi_x$ and $\pi_{x'}$ are the subsurface projections to the respective annuli.

In \cite{Mas-Min2}, Masur and Minsky showed that the space $M(S)$ of complete clean markings with the path metric defined above is quasi-isometric to the mapping class group $G$ with the word metric. Let $R_1$ be the Lipshitz constant for this quasi-isometry i.e. the multiplicative constant in the quasi-isometry.

\subsection{Subsurface projections on the marking complex:}\label{Proj}

For a subsurface $Y$, we will write $d_Y$ for distance in the complex of curves $\cC(Y)$ of $Y$.

The subsurface projection map $\pi$ on the curve complex can be extended to a subsurface projection map for complete clean markings. For a complete clean marking $m$, define $\pi(m)$ as follows: if $Y$ is an annulus with core curve $x \in \mathit{base}(m)$, then define $\pi(m) = \pi(t)$, where $t$ is the transversal for $x$. Otherwise set $\pi(m) = \pi(\mathit{base}(m))$. As before, this gives a coarse definition for $\pi$. The map $\pi$ is well defined on the entire space of complete clean markings $M(\Sigma)$, unlike the sub-surface projection map on $\cC(\Sigma)$.

If distinct markings $m$ and $m'$ differ by a twist move about $x$, then $\pi(m) = \pi(m')$ unless $Y$ is the annulus with core curve $x$, in which case $d_Y(\pi(m), \pi(m')) = d_Y(\pi(t), \pi(t')) \leqslant  2$. If the markings $m$ and $m'$ differ by a flip move along $x$, then in a similar way one can check that $d_Y(\pi(m), \pi(m')) \leqslant  2$. This implies that for any sub-surface $Y$, the projection map $\pi: M(\Sigma) \to \cC(Y)$ is coarsely 2-Lipshitz. By pre-composing $\pi$ on $M(\Sigma)$ by the quasi-isometry from $G$ to $M(\Sigma)$ we get a projection $\pi$ on $G$, which we continue to denote by $\pi$. The map $\pi$ on $G$ is then coarsely $2R_1$-Lipshitz.

In \cite{Mas-Min2}, Masur and Minsky proved a quasi-distance formula expressing the distance in the marking complex $M(\Sigma)$ in terms of subsurface projections. Here, we state a slightly simplified version of it. Given a number $A>0$, for any $d \in \N$, let
\begin{eqnarray*}
[d]_A &=& d \hskip 20pt \text{if} \hskip 5pt d \geq A \\
&=& 0 \hskip 20pt \text{otherwise}
\end{eqnarray*}

\begin{theorem}[Quasi-distance formula]\label{Quasi-Distance}
There exists a constant $A>0$ that depends only on the topology of $\Sigma$, such that for any pair of markings $m$ and $m'$ in $M(\Sigma)$ we have the estimate
\begin{equation}\label{quasi-distance}
d_{M(\Sigma)}(m,m') \approx \sum_{Y \subseteq \Sigma} [d_Y(\pi_Y(m),\pi_Y(m'))]_A
\end{equation}
where the sum runs over all sub-surfaces $Y$ of $\Sigma$. The constants of approximation in the above formula depend only on the topology of $\Sigma$.
\end{theorem}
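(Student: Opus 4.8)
Since Theorem~\ref{Quasi-Distance} is Masur and Minsky's distance formula, the plan is to reproduce the structure of their argument, which splits into the two inequalities packaged in the symbol $\approx$: a \emph{lower bound} $d_{M(\Sigma)}(m,m') \succeq \sum_{Y}[d_Y(\pi_Y(m),\pi_Y(m'))]_A$, certifying that many large subsurface projections force the markings far apart, and an \emph{upper bound} $d_{M(\Sigma)}(m,m') \preceq \sum_{Y}[d_Y(\pi_Y(m),\pi_Y(m'))]_A$, producing an efficient path between them. The cut-off $[\,\cdot\,]_A$ is essential here: between a typical pair of markings there are already infinitely many subsurfaces $Y$ with $d_Y(\pi_Y(m),\pi_Y(m'))=1$, so the untruncated sum need not even be finite; the correct threshold $A=A(\Sigma)$ is the one for which both inequalities go through, and it is dictated by the elementary-move estimates below together with the Bounded Geodesic Image phenomenon.

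For the lower bound I would argue along a geodesic path $m=m_0,m_1,\dots,m_L=m'$ of elementary moves in $M(\Sigma)$, so $L=d_{M(\Sigma)}(m,m')$. The two inputs are: (i) each projection $\pi_Y\colon M(\Sigma)\to\cC(Y)$ is coarsely Lipschitz — this was already recorded in the discussion of subsurface projections on the marking complex — so $d_Y(\pi_Y(m_i),\pi_Y(m_{i+1}))$ is bounded for every $Y$ and $i$; and (ii) a single elementary move changes the projection to \emph{any} subsurface by at most a universal constant $B=B(\Sigma)$, since the old and new base curves of a flip are adjacent in the curve complex of the $1$-holed torus or $4$-holed sphere they fill, and the remaining transversals are altered only by a bounded re-cleaning. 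Choosing $A>B$ and applying the triangle inequality in each $\cC(Y)$, for every $Y$ contributing a nonzero term the ``footprint'' $\{\,i : \pi_Y(m_i)\neq\pi_Y(m_{i+1})\,\}$ has size $\succeq d_Y(\pi_Y(m),\pi_Y(m'))$; summing these footprints over $Y$, interchanging the order of summation, and using that at any single step only a bounded number of the large-projection subsurfaces can be active, gives $\sum_{Y}[d_Y(\pi_Y(m),\pi_Y(m'))]_A \preceq L$.

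The upper bound is the deep half, and the resolution step inside it is the obstacle I expect to dominate the work. The strategy is to invoke Masur and Minsky's machinery of \emph{hierarchies of tight geodesics}: build a finite family $H$ of tight geodesics in curve complexes of subsurfaces of $\Sigma$ joining $m$ to $m'$, organised by subordinacy — a main tight geodesic in $\cC(\Sigma)$ between a base curve of $m$ and one of $m'$, and recursively a tight geodesic in $\cC(Y)$ for each subsurface $Y$ that appears as a ``large link'' of a geodesic already in $H$. Two facts then have to be assembled. First, the total length $\sum_{h\in H}|h|$ is comparable, with constants depending only on $\Sigma$, to $\sum_{Y}[d_Y(\pi_Y(m),\pi_Y(m'))]_A$: the only subsurfaces supporting a geodesic of $H$ are those with projection distance past a threshold, and for such $Y$ the corresponding geodesic has length coarsely $d_Y(\pi_Y(m),\pi_Y(m'))$ — this is the content of the ``Large Links'' lemma. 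Second, a hierarchy can be \emph{resolved} into a sequence $m=\mu_0,\mu_1,\dots,\mu_N=m'$ of complete clean markings with consecutive $\mu_i$ differing by one elementary move and $N$ at most a linear function of $\sum_{h\in H}|h|$. Chaining the two facts yields a path in $M(\Sigma)$ of length $\preceq\sum_{Y}[d_Y(\pi_Y(m),\pi_Y(m'))]_A$, as required. Showing that the resolution terminates with controlled length — which rests on the fine combinatorics of hierarchies: tightness, the descent of domains under subordinacy, the absence of backtracking, and the Bounded Geodesic Image theorem — is the technical heart of Masur and Minsky's ``Geometry of the complex of curves II'', and in the present paper it is used as a black box.
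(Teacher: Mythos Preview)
The paper does not prove Theorem~\ref{Quasi-Distance} at all: it is quoted from Masur--Minsky \cite{Mas-Min2} and used as a black box, so there is no ``paper's own proof'' to compare your proposal against. Your outline is a faithful high-level sketch of the Masur--Minsky argument itself --- the easy Lipschitz lower bound and the hierarchy/resolution upper bound --- and your closing sentence already acknowledges that the resolution step is imported wholesale from \cite{Mas-Min2}. One caution on the lower bound: the step ``at any single step only a bounded number of the large-projection subsurfaces can be active'' is where the real content hides, since a priori an elementary move perturbs infinitely many subsurface projections; making this precise requires the time-ordering of domains (essentially the Behrstock inequality) rather than a naive activity count, so if you intend this as more than a citation you should flag that ingredient explicitly.
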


The map $\pi$ can also be thought of as a subsurface projection map on the relative space $\widehat{G}$. Recall that the relative space is quasi-isometric to $\cC(\Sigma)$. If we pull-back the subsurface projection on $\cC(\Sigma)$ by this quasi-isometry, we get a map $\widehat{G} \to \cC(Y)$ that is coarsely equivalent to the map $\pi$ defined above. The reason for defining projection $\pi$ on $\widehat{G}$ this way is that now $\pi$ is defined everywhere on $\widehat{G}$. This feature is exploited in the proof of Theorem~\ref{exp-decay}.

We will write $\widehat{d}$ for distance in the relative metric on $G$.

\section{Useful facts about the relative space}\label{Facts}

In this section, we state some facts due to Maher, about half-spaces in the relative space. These shall be used in the proof of Theorem~\ref{exp-decay} in the next section. Some results stated here are straightforward, and the proofs are left to the reader.

Since the relative space $\widehat{G}$ is $\delta$-hyperbolic, nearest point projections are coarsely well defined. Denote the identity element in $G$ by 1. Fixing 1 as the base point, we can define the Gromov product $(x \mid y)$ to be
\[
(x \mid y) = \frac{1}{2} \left( \widehat{d}(1,x)+ \widehat{d}(1,y)- \widehat{d}(x,y) \right)
\]
It turns out that the points in the Gromov boundary $\fmin$ correspond to equivalence classes of sequences $\x = (x_i)$, where $\x \sim \y$ if and only if the Gromov product $(x_i \mid y_j) \to \infty$ as $i,j \to \infty$. It can be checked that the equivalence relation does not depend on the base point, and so this can be taken as a definition of the Gromov boundary.

Now we state the results about half-spaces.

\begin{proposition}\label{npp}
Let $p$ and $q$ be nearest points to $z$ on a geodesic $[x,y]$. Then $\widehat{d}(p,q) \leqslant  6\delta$.
\end{proposition}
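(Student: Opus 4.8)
The plan is to give the standard thin-triangles argument establishing that nearest-point projection onto a geodesic in a $\delta$-hyperbolic space is coarsely well defined. The one structural observation that makes everything work is that, since $p$ and $q$ both lie on the geodesic $[x,y]$, the sub-arc of $[x,y]$ joining $p$ to $q$ is itself a geodesic segment; so I may speak of its midpoint $w$, and of the geodesic triangle with vertices $z$, $p$, $q$ whose third side is this sub-arc of $[x,y]$. I would set up exactly this triangle as the first step.

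Next I would apply $\delta$-thinness of that triangle to the point $w$: there is a point $w'$ on $[z,p]\cup[z,q]$ with $\widehat d(w,w')\le \delta$, and without loss of generality $w'\in[z,p]$. Now I invoke the defining property of $p$, namely that it realizes the distance from $z$ to $[x,y]$; since $w$ lies on $[x,y]$, this gives $\widehat d(z,w)\ge \widehat d(z,p)$. On the other hand $w'$ lies on the geodesic $[z,p]$, so $\widehat d(z,w')=\widehat d(z,p)-\widehat d(w',p)$, while the triangle inequality gives $\widehat d(z,w')\ge \widehat d(z,w)-\delta\ge \widehat d(z,p)-\delta$. Comparing the two expressions yields $\widehat d(w',p)\le \delta$, hence $\widehat d(w,p)\le 2\delta$. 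Since $w$ is the midpoint of the sub-arc of $[x,y]$ from $p$ to $q$, we conclude $\widehat d(p,q)=2\,\widehat d(w,p)\le 4\delta\le 6\delta$.

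There is no real obstacle here; the argument is routine, and the only thing requiring a little care is bookkeeping the hyperbolicity constant. Depending on which of the standard equivalent formulations of $\delta$-hyperbolicity one takes as primitive (thin triangles, the four-point/Gromov-product condition, or the insize condition), the slimness constant feeding into the estimate above changes by a bounded factor; I would state the proposition with $6\delta$ as a uniform bound valid under the convention fixed earlier for $\widehat G$, noting that for genuinely $\delta$-thin triangles the sharper $4\delta$ already works. Finally I would remark that the same estimate, with $z$ arbitrary, is precisely the assertion that nearest-point projection to $[x,y]$ is coarsely well defined with the stated error, which is the form in which this proposition is used in the proof of Theorem~\ref{exp-decay}.
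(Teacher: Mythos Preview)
Your argument is correct; it is the standard thin-triangles proof that nearest-point projection to a geodesic in a $\delta$-hyperbolic space is coarsely single-valued, and your bookkeeping is clean. The paper does not actually supply a proof of this proposition: the surrounding text says ``Some results stated here are straightforward, and the proofs are left to the reader,'' and Proposition~\ref{npp} is one of those. So there is nothing to compare against, and your write-up would serve perfectly well as the omitted proof. Your closing remark about the discrepancy between $4\delta$ and $6\delta$ depending on the hyperbolicity convention (and, implicitly, on whether exact midpoints exist in the graph metric) is exactly the right caveat to include.
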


\begin{proposition}[Maher \cite{Mah2} Proposition 3.4] \label{double}
Let $[x,y]$ be a geodesic and let $p$ be a closest point on $[x,y]$ to $a$, and let $q$ be a closest point on $[x,y]$ to $b$. If $\widehat{d}(p,q) > 14\delta$ then $\widehat{d}(a,b) \geqslant \widehat{d}(a,p) + \widehat{d}(p,q) +
\widehat{d}(q,b) - 24\delta$.
\end{proposition}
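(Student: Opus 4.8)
\noindent The plan is to prove this as a purely $\delta$-hyperbolic statement, by showing that the geodesic $[x,y]$ forces the broken path $a \to p \to q \to b$ to be almost as efficient as a geodesic from $a$ to $b$, up to an additive error controlled by $\delta$. Everything rests on the following \emph{projection estimate}, which I would establish first: if $p$ is a closest point on a geodesic $\gamma$ to a point $a$, then for every $w$ on $\gamma$ the Gromov product $(a\mid w)_p$ is at most $4\delta$, i.e.
\[
\widehat d(a,w) \;\geq\; \widehat d(a,p) + \widehat d(p,w) - 8\delta .
\]
To prove it, form the geodesic triangle on $a,p,w$, choosing the side from $p$ to $w$ to be the subsegment of $\gamma$ between $p$ and $w$. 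If $t = (a\mid w)_p$ were larger than $4\delta$, then by $\delta$-thinness the point at distance $t$ from $p$ along $[p,a]$ would lie within $O(\delta)$ of the point at distance $t$ from $p$ along $[p,w]\subseteq\gamma$; the latter is a point of $\gamma$ at distance $\widehat d(a,p)-t+O(\delta) < \widehat d(a,p)$ from $a$, contradicting the choice of $p$. Running the same triangle argument and reading off the position of $p$ also gives the dual form: $p$ lies within $O(\delta)$ of \emph{every} geodesic $[a,w]$ with $w\in\gamma$.

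Granting the projection estimate, I would finish as follows. After swapping $a\leftrightarrow b$ and $p\leftrightarrow q$ if necessary, assume the order along $[x,y]$ is $x,p,q,y$. Applying the projection estimate to $b$, the geodesic $[x,y]$, its closest point $q$, and $w=p$ yields $\widehat d(p,b)\geq \widehat d(p,q)+\widehat d(q,b)-8\delta$. It therefore suffices to show that $p$ lies within $O(\delta)$ of the geodesic $[a,b]$, for then $(a\mid b)_p=O(\delta)$, and adding the resulting inequality $\widehat d(a,b)\geq \widehat d(a,p)+\widehat d(p,b)-O(\delta)$ to the previous one gives the claim (with the constants arranged to come out at $24\delta$). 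To locate $p$ near $[a,b]$: by the dual projection estimate the geodesic $[a,q]$ passes within $O(\delta)$ of $p$, and by $\delta$-thinness of the triangle on $a,q,b$ that point of $[a,q]$ lies within $\delta$ of $[a,b]\cup[q,b]$, so I only need to rule out that $p$ is $O(\delta)$-close to the side $[q,b]$. This is exactly where the hypothesis $\widehat d(p,q)>14\delta$ is used: if some $z\in[q,b]$ had $\widehat d(z,p)=O(\delta)$, then $\widehat d(z,q)\geq \widehat d(p,q)-O(\delta)$ is bounded below, whereas the estimate $\widehat d(b,p)\geq \widehat d(b,q)+\widehat d(q,p)-8\delta$ combined with $\widehat d(b,p)\leq \widehat d(b,z)+\widehat d(z,p)=\widehat d(b,q)-\widehat d(z,q)+\widehat d(z,p)$ forces $\widehat d(z,q)$ to be small; once $\widehat d(p,q)$ exceeds the relevant multiple of $\delta$ — and $14\delta$ suffices — these contradict each other. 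Hence $p$ is $O(\delta)$-close to $[a,b]$, which completes the argument.

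The only genuinely delicate point is the constant bookkeeping: one must fix a convention of $\delta$-hyperbolicity (slim versus thin triangles, and the comparison constants relating Gromov products to fellow-travelling) so that all the $O(\delta)$ terms assemble into exactly $24\delta$ and so that $14\delta$ is the sharp threshold needed above; Proposition~\ref{npp} should be invoked to see that the whole argument is insensitive to which closest points $p$ and $q$ one picks. Beyond this, the proof is routine and I do not anticipate a conceptual obstacle.
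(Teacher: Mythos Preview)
The paper does not actually prove this proposition: it is simply quoted from Maher \cite{Mah2}, Proposition~3.4, and used as a black box. So there is no ``paper's own proof'' to compare against.

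That said, your outline is a correct and standard way to establish this fact in any $\delta$-hyperbolic space. The projection estimate $(a\mid w)_p \leqslant O(\delta)$ for $w$ on the geodesic and $p$ a nearest point to $a$ is exactly the right starting lemma, and your two-step strategy---first bounding $\widehat d(p,b)$ from below via the projection estimate applied at $q$, then showing $p$ lies $O(\delta)$-close to $[a,b]$ by chasing it through the triangle $a,q,b$ and using the hypothesis $\widehat d(p,q)>14\delta$ to exclude the side $[q,b]$---is the natural decomposition. The contradiction you set up in the last step is clean and does go through: combining $\widehat d(b,p)\geqslant \widehat d(b,q)+\widehat d(q,p)-8\delta$ with $\widehat d(b,p)\leqslant \widehat d(b,z)+\widehat d(z,p)$ and $z\in[q,b]$ forces $\widehat d(z,q)$ to be both large (from $\widehat d(p,q)$) and small, which is impossible once $\widehat d(p,q)$ exceeds a definite multiple of $\delta$. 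Your caveat about fixing a hyperbolicity convention so that the constants assemble to exactly $14\delta$ and $24\delta$ is well taken and is the only place where care is needed; there is no conceptual gap.
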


\noindent In particular, the above proposition implies that nearest point projection is coarsely distance non-increasing.

\begin{corollary}\label{cdd}
Let $[x, y]$ be a geodesic, and let $p$ be a nearest point to $a$ and $q$ a nearest point to $b$ on $[x,y]$. Then
$\widehat{d}(p,q) \leqslant  \widehat{d}(a,b) + 24 \delta$.
\end{corollary}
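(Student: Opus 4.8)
The plan is to derive this immediately from Proposition~\ref{double} by a short case analysis on the size of $\widehat{d}(p,q)$, using only that the metric $\widehat{d}$ is non-negative.

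First I would dispose of the easy case: if $\widehat{d}(p,q) \leqslant 14\delta$, then since $\widehat{d}(a,b) \geqslant 0$ we trivially have $\widehat{d}(p,q) \leqslant 14\delta \leqslant 24\delta \leqslant \widehat{d}(a,b) + 24\delta$, which is even stronger than claimed. So the only substantive case is $\widehat{d}(p,q) > 14\delta$, and here I would invoke Proposition~\ref{double} directly: it gives
\[
\widehat{d}(a,b) \geqslant \widehat{d}(a,p) + \widehat{d}(p,q) + \widehat{d}(q,b) - 24\delta.
\]
Then I would simply drop the two non-negative terms $\widehat{d}(a,p)$ and $\widehat{d}(q,b)$ to obtain $\widehat{d}(a,b) \geqslant \widehat{d}(p,q) - 24\delta$, which rearranges to the desired inequality $\widehat{d}(p,q) \leqslant \widehat{d}(a,b) + 24\delta$.

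There is essentially no obstacle here; the corollary is a formal consequence of Proposition~\ref{double} together with the triangle-inequality-free observation that discarding the endpoint-to-projection distances only weakens the lower bound. The only point requiring minimal care is making sure the threshold $14\delta$ in the hypothesis of Proposition~\ref{double} is handled in the complementary case, which as noted above is immediate because $14\delta \leqslant 24\delta$. Thus the two cases together cover all possibilities and yield the bound with the stated constant $24\delta$.
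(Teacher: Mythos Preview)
Your proof is correct and matches the paper's approach: the paper states this corollary as an immediate consequence of Proposition~\ref{double} without further argument, and your case split on whether $\widehat{d}(p,q)$ exceeds $14\delta$ is exactly the routine verification that makes this precise.
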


\noindent Let $H(x, y)$ be the half-space of points in $\widehat{G}$ that are closer to $y$ than to $x$ i.e.
\[
H(x, y)  = \{ a \in \widehat{G} \mid \widehat{d}(y, a) \leqslant  \widehat{d}(x, a) \}
\]

\begin{proposition} [Maher \cite{Mah2} Proposition 3.7] \label{half}
Let $z \in H(x,y)$, and let $p$ be the nearest point to $z$ on a geodesic $[x,y]$. Then $\widehat{d}(y,p) \leqslant  (1/2)\widehat{d}(x,y) + 3\delta$. Conversely, if $\widehat{d}(y,p) \leqslant  (1/2)\widehat{d}(x,y) -3\delta$, then $z \in H(x,y)$.
\end{proposition}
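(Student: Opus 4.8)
The plan is to reduce everything to one standard fact about $\delta$-hyperbolic spaces, the nearest-point projection estimate, and then run two short triangle-inequality computations. The ingredient I would use is: if $p$ is a nearest point of $z$ on the geodesic $[x,y]$ and $w$ is any point of $[x,y]$, then $\widehat{d}(z,w) \geqslant \widehat{d}(z,p) + \widehat{d}(p,w) - 6\delta$. A crude form of this is immediate from Proposition~\ref{double} applied with $a=z$ and $b=w$: since $w$ lies on $[x,y]$ it is its own nearest point there, so when $\widehat{d}(p,w)>14\delta$ one obtains the inequality with $24\delta$ in place of $6\delta$. To get the sharp $6\delta$ — which is exactly what produces the constant $3\delta$ in the statement — I would instead argue directly from the thin-triangle inequality, where the $6\delta$ diameter bound on nearest-point sets of Proposition~\ref{npp} enters; this direct argument also avoids any case distinction.

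For the forward implication, assume $z\in H(x,y)$, so $\widehat{d}(y,z)\leqslant\widehat{d}(x,z)$. Applying the projection estimate with $w=y$ gives $\widehat{d}(y,z)\geqslant\widehat{d}(y,p)+\widehat{d}(p,z)-6\delta$, and the triangle inequality gives $\widehat{d}(x,z)\leqslant\widehat{d}(x,p)+\widehat{d}(p,z)$. Chaining these three relations and cancelling $\widehat{d}(p,z)$ yields $\widehat{d}(y,p)\leqslant\widehat{d}(x,p)+6\delta$; since $p$ lies on $[x,y]$ we have $\widehat{d}(x,p)+\widehat{d}(y,p)=\widehat{d}(x,y)$, and substituting gives $2\widehat{d}(y,p)\leqslant\widehat{d}(x,y)+6\delta$, which is the assertion.

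For the converse, assume $\widehat{d}(y,p)\leqslant\tfrac12\widehat{d}(x,y)-3\delta$. Bound $\widehat{d}(y,z)\leqslant\widehat{d}(y,p)+\widehat{d}(p,z)$ trivially, and apply the projection estimate with $w=x$ to get $\widehat{d}(x,z)\geqslant\widehat{d}(x,p)+\widehat{d}(p,z)-6\delta$. Subtracting and using $\widehat{d}(x,p)=\widehat{d}(x,y)-\widehat{d}(y,p)$ gives $\widehat{d}(y,z)-\widehat{d}(x,z)\leqslant 2\widehat{d}(y,p)-\widehat{d}(x,y)+6\delta\leqslant 0$, so $z\in H(x,y)$. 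The logic here is symmetric and short, so I expect the only real obstacle to be the constant bookkeeping: using Proposition~\ref{double} as a black box only delivers the weaker inequalities with $12\delta$ in place of $3\delta$, so the sharp form genuinely needs the hands-on thin-triangle proof of the $6\delta$ projection estimate indicated above.
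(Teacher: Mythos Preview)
Your argument is correct. The paper does not actually prove this proposition; it is quoted verbatim from Maher's work with only the citation \cite{Mah2}, Proposition~3.7, so there is no in-paper proof to compare against. Your approach is the standard one: reduce to the projection inequality $\widehat{d}(z,w)\geqslant\widehat{d}(z,p)+\widehat{d}(p,w)-6\delta$ for $w\in[x,y]$ (equivalently, the Gromov product $(z\mid w)_p\leqslant 3\delta$), then run two symmetric triangle-inequality computations. Both computations check line by line, and your observation that Proposition~\ref{double} alone would only yield the statement with $12\delta$ in place of $3\delta$ is accurate; the sharp constant does require the direct thin-triangle argument you indicate. The one place a grader might press you is that you gesture at this $6\delta$ projection estimate rather than writing it out, but it is a well-known lemma in $\delta$-hyperbolic geometry and the sketch you give (thin triangles plus the $6\delta$ bound of Proposition~\ref{npp}) is the right shape.
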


\noindent Henceforth, we consider half-spaces based at 1 i.e. half-spaces of the form $H(1,a)$.

\begin{proposition}\label{boundedset}
There is a constant $K_1$, which only depends on $\delta$, such that for any half-space $H(1, a)$, with $\widehat{d}(1,a) \geqslant K_1$, and for any set $X$ of relative diameter at most $D$ intersecting $H(1,a)$, the
half-space $H(1,b)$ contains $H(1, a) \cup X$, where $b \in [1,a]$ with $\widehat{d}(1,b) = \widehat{d}(1,a) - 2D - K_1$.
\end{proposition}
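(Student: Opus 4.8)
The plan is to show that $H(1,b) \supseteq H(1,a)$ first, and then that $H(1,b) \supseteq X$, choosing $b$ on $[1,a]$ far enough back from $a$ to absorb both the half-space $H(1,a)$ and the ``overhang'' of the set $X$. Throughout we use that $\widehat{G}$ is $\delta$-hyperbolic, so nearest-point projections onto geodesics are coarsely well defined (Proposition~\ref{npp}), and we use the two characterizations of a half-space from Proposition~\ref{half}: a point $z$ lies in $H(1,a)$ whenever its nearest point $p$ on $[1,a]$ satisfies $\widehat{d}(a,p) \leqslant (1/2)\widehat{d}(1,a) + 3\delta$, and conversely $z \in H(1,a)$ is \emph{forced} once $\widehat{d}(a,p) \leqslant (1/2)\widehat{d}(1,a) - 3\delta$.

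First I would handle $H(1,a) \subseteq H(1,b)$. Since $b \in [1,a]$, the segment $[1,b]$ is a sub-segment of $[1,a]$, and for any $z \in H(1,a)$ with nearest point $p$ on $[1,a]$, one shows $p$ is within bounded distance of the nearest point $p'$ of $z$ on the shorter geodesic $[1,b]$; concretely, if $\widehat{d}(1,p) \leqslant \widehat{d}(1,b)$ then $p$ already lies on $[1,b]$ up to the coarseness $6\delta$ of Proposition~\ref{npp}, and if $\widehat{d}(1,p) > \widehat{d}(1,b)$ then $b$ itself is the coarsely nearest point on $[1,b]$. Using Proposition~\ref{half} applied to $z \in H(1,a)$ we have $\widehat{d}(a,p) \leqslant (1/2)\widehat{d}(1,a)+3\delta$, hence $\widehat{d}(1,p) \geqslant (1/2)\widehat{d}(1,a) - 3\delta$; combined with $\widehat{d}(1,b) = \widehat{d}(1,a) - 2D - K_1$ one computes that the nearest point of $z$ on $[1,b]$ is within $(1/2)\widehat{d}(1,b) - 3\delta$ of $b$ (this is where the $K_1$ term, chosen large relative to $\delta$, gives the needed slack), and the converse half of Proposition~\ref{half} then puts $z \in H(1,b)$.

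Next I would handle $X \subseteq H(1,b)$. Pick any point $x_0 \in X \cap H(1,a)$; since $\widehat{d}(1,a) \geqslant K_1$ and $x_0 \in H(1,a)$, its nearest point on $[1,a]$ is at distance at most $(1/2)\widehat{d}(1,a)+3\delta$ from $a$, i.e.\ at distance at least $(1/2)\widehat{d}(1,a)-3\delta \geqslant \widehat{d}(1,b) + D$ from $1$ once $K_1$ is large enough. For an arbitrary $x \in X$, we have $\widehat{d}(x_0,x) \leqslant D$; by Corollary~\ref{cdd} the nearest points of $x_0$ and $x$ on $[1,b]$ are within $D + 24\delta$ of each other, and the nearest point of $x_0$ on $[1,b]$ is (coarsely) $b$ itself since $x_0$'s projection to $[1,a]$ lies past $b$. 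Feeding these estimates into the converse direction of Proposition~\ref{half} shows $\widehat{d}(b, \text{proj}_{[1,b]}(x)) \leqslant (1/2)\widehat{d}(1,b) - 3\delta$ provided $\widehat{d}(1,b)$ is large, which it is once $K_1$ exceeds a suitable multiple of $\delta$; hence $x \in H(1,b)$. Taking the union over $x \in X$ and combining with the previous paragraph gives $H(1,b) \supseteq H(1,a) \cup X$.

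The only real subtlety — and the step I expect to be the main obstacle to getting the constants right — is bookkeeping the interaction between the $2D$ shift and the $K_1$ shift: one must verify that a \emph{single} backward displacement of $2D + K_1$ simultaneously (i) preserves containment of the already-large half-space $H(1,a)$ and (ii) pulls in the diameter-$D$ set $X$ whose projection onto $[1,a]$ only overlaps $H(1,a)$ on one side. The correct way to see this is that the $2D$ accounts for the worst-case shift of $X$'s projection relative to $x_0$'s projection (factor $D$ from $\widehat{d}(x_0,x) \leqslant D$, plus another $D$ of margin), while $K_1$ is a pure $\delta$-budget absorbing all the additive $3\delta$, $6\delta$, and $24\delta$ errors from Propositions~\ref{npp}, \ref{half} and Corollary~\ref{cdd}. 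Since none of the coarseness constants depend on $a$, $X$, or $D$, the resulting $K_1$ depends only on $\delta$, as claimed.
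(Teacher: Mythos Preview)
Your proposal is correct and follows essentially the same approach as the paper: project to the geodesic $[1,a]$, use Proposition~\ref{half} to bound the projection of $H(1,a)$ away from $1$, use Corollary~\ref{cdd} to bound how far the projection of $X$ can drift (by $D+24\delta$), and then invoke the converse direction of Proposition~\ref{half} on $[1,b]$ to conclude. The only cosmetic difference is that the paper treats $H(1,a)\cup X$ in a single pass and is terser about the $[1,a]$ versus $[1,b]$ projection issue, whereas you split the two containments and spell out the sub-segment projection more carefully; your extra care here is warranted and does not change the argument or the dependence of $K_1$ on $\delta$ alone.
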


\begin{proof}
By Proposition~\ref{half}, the nearest point projection of $H(1,a)$ to $[1, a]$ is distance at least $ (1/2)\widehat{d}(1,a) - 3\delta$ from $1$. By Corollary~\ref{cdd}, the nearest point projection of $H(1,a) \cup X$ to $[1,a]$ is then relative distance at least $(1/2)\widehat{d}(1,a) - 27\delta - D$ from $1$. So, if a point $b$ on $[1,a]$ satisfies
\begin{equation}\label{E1}
\frac{\widehat{d}(1,b)}{2} + 3\delta \leqslant \frac{\widehat{d}(1,a)}{2} - 27 \delta -D
\end{equation}
then for any point in $H(1,a) \cup X$, the closest point $p$ on $[1,b]$ satisfies $\widehat{d}(b,p) \leqslant (1/2)\widehat{d}(1,b) - 3 \delta$. By the last line in Proposition~\ref{half}, the set $H(1,a) \cup X$ has to lie in $H(1,b)$. Rewriting Inequality~\eqref{E1}, we see that we may choose $K_1 = 30 \delta$.
\end{proof}

\begin{proposition} \label{nbhd}
There is a constant $K_2$, which depends only on $\delta$, such that for any half-space $H(1,a)$, with $\widehat{d}(1,a) \geqslant K_2$, and for any large enough positive integer $r$, there is a point $b$ with $\widehat{d}(1,b) = r - K_2$ such that every half-space $H(1,x)$, with $\widehat{d}(1,x) = r$, that intersects $H(1,a)$, is contained in $H(1,b)$.
\end{proposition}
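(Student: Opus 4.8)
The plan is to deduce the proposition from a single elementary reformulation of ``half-space based at $1$'' in terms of the Gromov product: for any point $v \in \widehat{G}$ and any $u$, rearranging the definitions gives
\[
v \in H(1,u) \iff \widehat{d}(u,v) \leqslant \widehat{d}(1,v) \iff (u \mid v) \geqslant \tfrac{1}{2}\,\widehat{d}(1,u).
\]
So a half-space $H(1,u)$ is literally a superlevel set of the function $v \mapsto (u\mid v)$. Fix a geodesic $[1,a]$ and let $b$ be the point on it with $\widehat{d}(1,b)=r-K_2$, where $K_2$ is a fixed multiple of $\delta$ to be chosen at the end. This point exists in the range that matters, namely $K_2 \leqslant r \leqslant \widehat{d}(1,a)$; in the applications $a$ is a ``target'' lying beyond the current relative distance $r$, so only this range occurs, and it is in fact essential (see the last paragraph). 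We must show: if $\widehat{d}(1,x)=r$ and $H(1,x)\cap H(1,a)\neq\emptyset$, then $H(1,x)\subseteq H(1,b)$.

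First I would pin down the direction of $x$. Choose $z\in H(1,x)\cap H(1,a)$. The reformulation gives $(x\mid z)\geqslant\tfrac12\widehat{d}(1,x)=r/2$ and $(a\mid z)\geqslant\tfrac12\widehat{d}(1,a)\geqslant r/2$, the last step using $r\leqslant\widehat{d}(1,a)$. The $\delta$-hyperbolicity of $\widehat{G}$, in its Gromov-product form, then yields $(x\mid a)\geqslant\min\{(x\mid z),(a\mid z)\}-\delta\geqslant r/2-\delta$; thus $[1,x]$ and $[1,a]$ fellow-travel past relative distance roughly $r/2$. Since $b$ lies on $[1,a]$ we have the exact identity $(b\mid a)=\widehat{d}(1,b)=r-K_2$, so another application of $\delta$-thinness (with $a$ as the middle point) gives $(x\mid b)\geqslant\min\{(x\mid a),(b\mid a)\}-\delta\geqslant\min\{r/2-\delta,\,r-K_2\}-\delta=r/2-2\delta$, where the minimum is the first term once $r$ exceeds a fixed multiple of $\delta$.

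Now let $\zeta\in H(1,x)$ be arbitrary; the reformulation again gives $(x\mid\zeta)\geqslant\tfrac12\widehat{d}(1,x)=r/2$. One more use of the Gromov-product inequality, with $x$ as the middle point, gives
\[
(b\mid\zeta)\;\geqslant\;\min\{(b\mid x),(x\mid\zeta)\}-\delta\;\geqslant\;\min\{r/2-2\delta,\;r/2\}-\delta\;=\;\tfrac{r}{2}-3\delta .
\]
Taking $K_2=6\delta$ (and requiring $r\geqslant 10\delta$, say, so that all the minima above come out as claimed and $b$ genuinely sits on $[1,a]$), the right-hand side equals $\tfrac12(r-K_2)=\tfrac12\widehat{d}(1,b)$, hence $\zeta\in H(1,b)$ by the reformulation. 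As $\zeta$ was arbitrary, $H(1,x)\subseteq H(1,b)$, and $K_2$ depends only on $\delta$, as required.

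The one place where the hypotheses do real work is the inequality $(a\mid z)\geqslant r/2$, which needs $r\leqslant\widehat{d}(1,a)$: if $r$ were much larger than $\widehat{d}(1,a)$, a half-space $H(1,x)$ at distance $r$ meeting $H(1,a)$ could point in many genuinely different directions inside $H(1,a)$, and no single $H(1,b)$ at distance $r-K_2$ would contain them all. Everything else is bookkeeping with $\delta$, and the only subtlety I anticipate is keeping the $\delta$-losses aligned so that the final constant $K_2$ still depends on $\delta$ alone. One could alternatively run the argument through nearest-point projections and Proposition~\ref{half}, in the style of Proposition~\ref{boundedset}, but the Gromov-product bookkeeping above seems shortest.
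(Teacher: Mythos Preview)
Your Gromov-product argument is clean and correct for the regime you treat. The paper does not give an independent proof here at all: it simply defers to the proof of Lemma~2.15 in \cite{Mah3}. So your approach is genuinely different in that it is self-contained, and the bookkeeping with the four-point inequality is exactly the right way to do it.

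One point deserves correction, though it does not affect the validity of your argument. You assert that in the application only the range $r\leqslant\widehat d(1,a)$ occurs, but this is backwards. In the proof of Lemma~\ref{measure} the proposition is invoked with $a=y$ satisfying $\widehat d(1,y)=K_4-3R_2-K_1$ and with $r=K_4$, so in fact $r>\widehat d(1,a)$; moreover the conclusion actually used there is $\widehat d(1,z)=\widehat d(1,a)-K_2$, not $r-K_2$. This suggests the proposition as stated has a typo and should read $\widehat d(1,b)=\widehat d(1,a)-K_2$ together with $r\geqslant\widehat d(1,a)$. Your argument adapts to this regime with no new ideas: when $r\geqslant\widehat d(1,a)$ the bound $(a\mid z)\geqslant\widehat d(1,a)/2$ replaces $r/2$, and propagating through the same three applications of the four-point inequality gives $(b\mid\zeta)\geqslant\widehat d(1,a)/2-3\delta$, which with $K_2=6\delta$ yields $\zeta\in H(1,b)$ for $b$ on $[1,a]$ at distance $\widehat d(1,a)-K_2$. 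Your observation that the literal statement (with $\widehat d(1,b)=r-K_2$) fails once $r\gg\widehat d(1,a)$ is correct and is a useful diagnosis of the misstatement.
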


\begin{proof}
This follows immediately from the proof of Lemma 2.15 in \cite{Mah3}.
\end{proof}

\noindent Let $\overline{H(a, b)}$ be the limit set of the half space $H(a,b)$ in $\fmin= \partial \cC(\Sigma)$.

\begin{proposition} \label{zero}
Let $\mu$ be a probability distribution on $G$, whose support generates a non-elementary subgroup, and let $\nu$ be the corresponding harmonic measure. Then $\nu(\overline{H(1, x)}) \to 0$ as $\widehat{d}(1,x) \to \infty$.
\end{proposition}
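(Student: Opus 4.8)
The plan is to argue by contradiction. Suppose the conclusion fails; then there is an $\epsilon>0$ and a sequence $x_n$ with $r_n:=\widehat d(1,x_n)\to\infty$, which we may take strictly increasing, and $\nu(\overline{H(1,x_n)})\geq\epsilon$ for all $n$. From this I will extract either infinitely many pairwise disjoint Borel sets of $\nu$-measure $\geq\epsilon$, impossible since $\nu$ is a probability measure, or a single point of $\fmin$ carrying positive $\nu$-mass, impossible because $\nu$ is purely non-atomic: by Theorem~\ref{Poisson} the harmonic measure on $\pmf$ is non-atomic and concentrated on $\ue$, the map $\partial q$ is injective on $\ue$, and by Theorem~\ref{poisson} the measure $\nu$ on $\fmin$ is the push-forward of the $\pmf$-measure under $\partial q$, hence also non-atomic. (Since $\overline{H(1,x)}$ is closed in $\fmin$, all the sets below are Borel.)

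The first step is a coarse-geometric lemma converting membership in a boundary half-space into a Gromov-product bound. Fixing $1$ as basepoint, I claim there is a constant $C_0=C_0(\delta)$ such that: (i) $\overline{H(1,x)}\subseteq\{\,\xi\in\fmin:\ (x\mid\xi)_1\geq\tfrac12\widehat d(1,x)-C_0\,\}$; and (ii) if $\overline{H(1,x_m)}\cap\overline{H(1,x_n)}\neq\emptyset$ with $r_m\leq r_n$, then $(x_m\mid\xi)_1\geq\tfrac12 r_m-C_0$ for every $\xi\in\overline{H(1,x_n)}$. For (i) one uses Proposition~\ref{half}: a point $a\in H(1,x)$ has its nearest-point projection $p$ to $[1,x]$ at distance at least $\tfrac12\widehat d(1,x)-3\delta$ from $1$, the Gromov product $(x\mid a)_1$ differs from $\widehat d(1,p)$ by $O(\delta)$, and letting $a$ tend to $\xi\in\overline{H(1,x)}$ yields (i). For (ii) one applies the $\delta$-inequality for Gromov products to a common boundary point $\eta$ of the two shadows, first to bound $(x_m\mid x_n)_1$ below using (i) for both shadows at $\eta$, then to bound $(x_m\mid\xi)_1$ for $\xi\in\overline{H(1,x_n)}$ (enlarging $C_0$ to absorb the $O(\delta)$ losses). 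Thus (ii) is the boundary analogue of the containments in Propositions~\ref{boundedset} and~\ref{nbhd}: shadows that meet are nested up to a uniform error.

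Now I apply the infinite Ramsey theorem to the two-colouring of unordered pairs $\{m,n\}$ recording whether $\overline{H(1,x_m)}$ and $\overline{H(1,x_n)}$ intersect, and pass to an infinite monochromatic subsequence, relabelled $\{x_n\}$ with $r_n\to\infty$ increasing. If this subsequence is ``disjoint'', the sets $\overline{H(1,x_n)}$ are pairwise disjoint of measure $\geq\epsilon$, contradicting $\nu(\fmin)=1$. If it is ``meeting'', put $Z:=\bigcap_{N\geq1}\bigcup_{n\geq N}\overline{H(1,x_n)}$; each tail union contains $\overline{H(1,x_N)}$, hence has measure $\geq\epsilon$, the tail unions decrease, and $\nu$ is finite, so $\nu(Z)\geq\epsilon$ and $Z\neq\emptyset$. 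Given $\xi,\xi'\in Z$, pick $n_k\to\infty$ with $\xi\in\overline{H(1,x_{n_k})}$, and for each $k$ also pick $m\geq n_k$ with $\xi'\in\overline{H(1,x_m)}$ (possible since $\xi'$ lies in infinitely many shadows). By (i), $(x_{n_k}\mid\xi)_1\geq\tfrac12 r_{n_k}-C_0$; by (ii) applied to the meeting pair $\overline{H(1,x_{n_k})},\overline{H(1,x_m)}$, also $(x_{n_k}\mid\xi')_1\geq\tfrac12 r_{n_k}-C_0$; hence $(\xi\mid\xi')_1\geq\tfrac12 r_{n_k}-C_0-\delta\to\infty$, so $(\xi\mid\xi')_1=\infty$, i.e. $\xi=\xi'$ in $\fmin$. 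Thus $Z$ is a single point of positive $\nu$-measure, contradicting non-atomicity, which proves the proposition.

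I expect the main obstacle to be making Step~1 precise with constants uniform in $x$: the propositions of this section are stated for points and bounded subsets of $\widehat G$, so some care is needed to transfer them to the limit sets $\overline{H(1,x)}\subseteq\fmin$ and, above all, to establish the ``meeting shadows are nested up to bounded error'' estimate (ii) with $C_0$ depending only on $\delta$. Once that input is available, the Ramsey dichotomy and the non-atomicity argument are routine. One could instead run the argument at the level of the random walk, writing $\nu(\overline{H(1,x)})=\mathbb{P}[F(\p)\in\overline{H(1,x)}]$ and using that convergence $\omega_n\bb\to F(\p)$ forces the sample path to shadow the geodesic $[1,x]$ past its midpoint, but this requires the same coarse-geometric estimates.
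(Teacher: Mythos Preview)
Your argument is correct, but it is more elaborate than necessary, and the paper's proof avoids both the Ramsey dichotomy and your estimate (ii). The paper proceeds exactly as you do up through forming $U=\limsup_n \overline{H(1,x_n)}$ and noting $\nu(U)\geq\epsilon$; it then picks any $\lambda\in U$ and passes to a subsequence with $\lambda\in\overline{H(1,x_i)}$ for all $i$. The key observation is that for any two points $y,z\in H(1,x_i)$ one has $(y\mid z)_1\geq\tfrac12\widehat d(1,x_i)-9\delta$: this is just your estimate (i), but applied to \emph{two} points of the same half-space rather than to the center $x_i$ and one boundary point. It follows that the visual diameter of $\overline{H(1,x_i)}$ tends to zero, so since each contains $\lambda$ and has $\nu$-measure $\geq\epsilon$, every neighborhood of $\lambda$ has measure $\geq\epsilon$, giving the atom.

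In other words, there is no need to compare \emph{different} shadows (your (ii)) or to split into disjoint/meeting cases via Ramsey: the single estimate that each half-space $H(1,x)$ has Gromov-product diameter at most $\tfrac12\widehat d(1,x)+O(\delta)$ does all the work. Your detour through (ii) and Ramsey is harmless---the constants are uniform in $\delta$ as you indicate, and the argument goes through---but it obscures the simple reason the proposition holds: shadows based far from $1$ are small, so a fixed amount of measure cannot sit in arbitrarily far shadows without concentrating on a point.
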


\begin{proof}
Suppose not. Then for some $\epsilon > 0$, there is a sequence $x_i$ with $\widehat{d}(1,x_i) \to \infty$ such that $\nu(\overline{H(1,x_i)}) \geqslant \epsilon$. Set
\[
U = \limsup \overline{H(1,x_i)} = \bigcap_{n} \bigcup_{i \geqslant n} \overline{H(1,x_i)}
\]
i.e. the set $U$ consists of all points in $\fmin$ which lie in infinitely many $\overline{H(1,x_i)}$. The sets $U_n = \cup_{i \geqslant n} \overline{H(1,x_i)}$ form a decreasing sequence i.e. $U_n \supseteq U_{n+1}$. Moreover, $\nu(U_n) \geqslant \epsilon$ for all $n$. So $\nu(U) \geqslant \epsilon$, which implies that $U$ is non-empty.

Let $\lambda \in U$, and pass to a subsequence $x_i$ such that $\lambda \in \overline{H(1,x_i)}$ for all $i$ in the subsequence. We claim that $\cap \overline{H(1,x_i)} = \{ \lambda \}$. Suppose $\xi$ is a minimal foliation that also lies in $\cap \overline{H(1,x_i)}$.

Let $y$ and $z$ be any points in $H(1,x_i)$. Let $p$ and $q$ be the points on $[1,x_i]$ closest to $y$ and $z$ respectively. By the triangle inequality, we have $\widehat{d}(y,z) \leqslant \widehat{d}(y,p) + \widehat{d}(p,q) + \widehat{d}(z,q)$. Hence, the Gromov product $(y \mid z)$ satisfies
\[
(y \mid z) \geqslant \frac{1}{2} \left( \widehat{d}(1,y) - \widehat{d}(y,p) + \widehat{d}(1,z) - \widehat{d}(z,q) + \widehat{d}(p,q) \right)
\]
By Proposition 3.2 from \cite{Mah2}, we have $\widehat{d}(1,y) - \widehat{d}(y,p) \geqslant \widehat{d}(1,p) - 6\delta$ and $\widehat{d}(1,z) - \widehat{d}(z,q) \geqslant \widehat{d}(1,q) - 6\delta$. Hence
\[
(y \mid z) \geqslant \frac{1}{2} \left( \widehat{d}(1,p) + \widehat{d}(1,q) - \widehat{d}(p,q) - 12\delta \right)
\]
Now, either $\widehat{d}(1,q)- \widehat{d}(p,q)= \widehat{d}(1,p)$ or $\widehat{d}(1,p) - \widehat{d}(p,q)= \widehat{d}(1,q)$. Without loss of generality, assuming that the former is true we get
\[
(y \mid z) \geqslant \frac{1}{2} \left( 2 \widehat{d}(1,p) - 12 \delta \right)
\]
By Proposition~\ref{half}, we have $\widehat{d}(1,p) \geqslant (1/2)\widehat{d}(1,x_i) - 3 \delta$. So
\[
(y \mid z) \geqslant \frac{1}{2}\widehat{d}(1,x_i) - 9\delta
\]
Thus the Gromov product tends to infinity as $i$ goes to infinity implying $\xi = \lambda$.

By assumption, the harmonic measure $\nu(\overline{H(1,x_i)}) \geqslant \epsilon$ for all $i$ in the subsequence. This implies that $\nu(\lambda) \geqslant \epsilon$. But then, the measure $\nu$ has atoms, which contradicts Theorem~\ref{Poisson}. Therefore $\nu(\overline{H(1,x_i)}) \to 0$ as $\widehat{d}(1,x_i) \to \infty$.
\end{proof}

\noindent In fact, Maher proves that the harmonic measure $\nu(\overline{H(1, x)})$ decays exponentially in $\widehat{d}(1,x)$. To be precise,

\begin{proposition}[Maher \cite{Mah3} Lemma 5.4] \label{half-decay}
Let $\mu$ be a finitely supported probability distribution on $G$ whose support generates a non-elementary subgroup, and let $\nu$ be the corresponding harmonic measure. There are positive constants $K_3$ and $L<1$ that depend only on the topology of the surface and $\mu$, such that if $\widehat{d}(1,x) \geqslant K_3$ then $\nu(\overline{H(1,x_i)}) \leqslant  L^{\widehat{d}(1,x)}$.
\end{proposition}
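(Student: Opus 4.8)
The plan is to bootstrap the qualitative decay of Proposition~\ref{zero} into the quantitative exponential bound by a renewal argument that feeds on the nesting estimates for half-spaces proved in this section. Set
\[
f(r)=\sup\{\,\nu(\overline{H(1,x)}):\widehat{d}(1,x)\ge r\,\},
\]
a non-increasing function with $f(r)\to 0$ by Proposition~\ref{zero}. It suffices to find a constant $K>0$ and an $\epsilon>0$ such that $f(r)\le(1-\epsilon)f(r-K)$ for all sufficiently large $r$; iterating this and absorbing a bounded initial factor yields $\nu(\overline{H(1,x)})\le L^{\widehat{d}(1,x)}$ with $L=(1-\epsilon)^{1/K}$ once $\widehat{d}(1,x)\ge K_3$.

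To obtain the recursive inequality, fix $x$ with $\widehat{d}(1,x)=r$, choose a relative geodesic $[1,x]$ in $\widehat{G}$, and mark points $1=z_0,z_1,\dots,z_N=x$ along it with $\widehat{d}(z_{i-1},z_i)=K$, so $N\approx r/K$. Standard $\delta$-hyperbolic bookkeeping shows the limit sets are coarsely nested,
\[
\overline{H(1,x)}\subseteq\overline{H(z_{N-1},z_N)}\subseteq\cdots\subseteq\overline{H(z_0,z_1)},
\]
and that on the event $E_i:=\{\lambda\in\overline{H(z_{i-1},z_i)}\}$, where $\lambda=\lim_n\omega_n\in\fmin$ is the limit of the sample path, the path $\omega_n$ lies in the half-space $H(z_{i-1},z_i)$ for all sufficiently large $n$. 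Let $\tau$ be the first time $n$ with $\omega_n\in H(z_{N-2},z_{N-1})$; this is a stopping time, finite on $E_{N-1}$. By the Markov property, conditionally on the path up to time $\tau$ (on $\{\tau<\infty\}$) the future is $\omega_\tau$ times an independent fresh walk with boundary law $\nu$, so
\[
\mathbb{P}\bigl(\lambda\in\overline{H(z_{N-1},z_N)}\mid\text{path up to }\tau\bigr)=\nu\bigl(\overline{H(\omega_\tau^{-1}z_{N-1},\,\omega_\tau^{-1}z_N)}\bigr).
\]
The point of the nesting lemmas (Propositions~\ref{boundedset} and~\ref{nbhd}, together with Proposition~\ref{half}) is that, because $\omega_\tau$ has just entered the shell $H(z_{N-2},z_{N-1})$, the half-space on the right is based at a point within a bounded relative distance of $1$, hence is contained in a half-space $H(1,y)$ based at $1$ with $\widehat{d}(1,y)\ge K-C_0$, where $C_0$ depends only on $\delta$ and the maximal one-step size of the projected walk. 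Therefore the displayed conditional probability is at most $f(K-C_0)$; choosing $K$ large enough that $f(K-C_0)\le\tfrac12$ (possible by Proposition~\ref{zero}) and taking expectations over $E_{N-1}$ gives $\mathbb{P}(E_N)\le\tfrac12\,\mathbb{P}(E_{N-1})$. Running the identical argument down the chain yields $\nu(\overline{H(1,x)})\le\mathbb{P}(E_N)\le(1/2)^{\,N-1}$, the sought recursion.

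I expect the main obstacle to be the geometric input emphasised above: pinning down where the sample path is at the moment it ``commits'' to the half-space. Merely being closer to $z_{N-1}$ than to $z_{N-2}$ does not keep $\omega_\tau$ near $[1,x]$ — the path might enter the shell through a point far off to the side — so the claim that the base point of $H(\omega_\tau^{-1}z_{N-1},\omega_\tau^{-1}z_N)$ has moved only boundedly is not automatic. This is exactly where the half-space technology of Section~\ref{Facts} is needed, together with the transience, and in fact the positive rate of escape, of the projected random walk in $\widehat{G}$: positive drift forces the path to track the ray $[1,\lambda)$ closely enough that at its first deep entry into a shell it lies within a bounded neighbourhood of the corresponding wall, which is what makes the depth loss $C_0$ uniform. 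The positive-drift statement is itself one of Maher's results and is the external ingredient I would invoke. An alternative packaging is a last-exit decomposition, bounding $\nu(\overline{H(1,x)})$ by the expected number of visits of the walk to a bounded neighbourhood of the wall $\partial H(1,x)$ and then estimating that expectation by the same transience bounds; the combinatorics there is cleaner but rests on identical geometric facts.
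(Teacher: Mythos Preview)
The paper does not prove this proposition; it is quoted from Maher~\cite{Mah3} without argument, so there is no in-paper proof to compare against. I will therefore assess your outline on its own terms.

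The architecture is reasonable, but the iteration step has a genuine gap. You claim that the strong Markov property at $\tau$ together with the bound $\mathbb{P}(E_N\mid\mathcal F_\tau)\le\tfrac12$ on $\{\tau<\infty\}$ yields $\mathbb{P}(E_N)\le\tfrac12\,\mathbb{P}(E_{N-1})$. It does not: integrating gives only $\mathbb{P}(E_N)\le\tfrac12\,\mathbb{P}(\tau<\infty)$, and $E_{N-1}=\{\lambda\in\overline{H(z_{N-2},z_{N-1})}\}$ is a \emph{tail} event, not $\mathcal F_\tau$-measurable, so you cannot trade $\{\tau<\infty\}$ for $E_{N-1}$. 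If instead you run the chain with stopping times $\sigma_i=\inf\{n:\omega_n\in H(z_{i-1},z_i)\}$ and events $\{\sigma_i<\infty\}$, the recursion you need is
\[
\mathbb{P}(\sigma_{i+1}<\infty\mid\mathcal F_{\sigma_i})\le 1-\epsilon\quad\text{on }\{\sigma_i<\infty\},
\]
which is a uniform bound on a \emph{hitting} probability, not on a harmonic measure; the walk may enter a half-space and later leave it forever, so $\nu_g(\overline{H})\le\tfrac12$ does not give $\mathbb{P}_g(\text{hit }H)\le\tfrac12$. Proposition~\ref{zero} alone is too weak to supply this. Maher's actual route first uses the non-elementary hypothesis (Schottky pairs of pseudo-Anosovs, as sketched for $PSL(2,\Z)$ in Section~\ref{Torus}) to get a uniform $\nu(\overline{H(1,x)})\le 1-\epsilon$ at a fixed scale, together with a uniform escape estimate that controls exactly these hitting probabilities; the bootstrap from Proposition~\ref{zero} that you propose does not produce that input.

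Secondarily, the difficulty you single out as the ``main obstacle''---controlling where $\omega_\tau$ sits relative to $[1,x]$---is less serious than you suggest and does not require positive drift. Since $\omega_{\tau-1}\notin H(z_{N-2},z_{N-1})$ and the step size is bounded, Proposition~\ref{half} and Corollary~\ref{cdd} pin the nearest-point projection of $\omega_\tau$ onto $[1,x]$ to within $K/2+D+O(\delta)$ of $z_{N-2}$; then Proposition~\ref{double} gives $H(z_{N-1},z_N)\subset H(\omega_\tau,z_N)$ with $\widehat d(\omega_\tau,z_N)\ge 3K/2-D-O(\delta)$, \emph{regardless} of how far $\omega_\tau$ lies from the geodesic. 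So the depth loss $C_0$ is uniform by hyperbolicity alone. The missing ingredient is the probabilistic one above, not the geometric one.
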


\section{Maher's theorem}\label{Maher}

Roughly speaking, Maher's theorem states that for any sub-surface projection, the probability that the random walk nests distance $n$ in the sub-surface projection away from the base point is exponentially small in $n$.

\subsection{Application of the bounded geodesic image theorem:}
The important tool is the following bounded geodesic image theorem of Masur and Minsky (Theorem 3.1 in \cite{Mas-Min2}), which says that a geodesic in $\cC(\Sigma)$ for which sub-surface projection to $Y$ is defined for every vertex projects to a set of bounded diameter in $\cC(Y)$, where the bound depends only on the topology.

\begin{theorem}[Bounded geodesic image]\label{bounded-image}
Let $Y$ be an essential connected subsurface of $\Sigma$, not a three-punctured sphere, and let $\gamma$ be a geodesic segment in $\cC(\Sigma)$, such that $\pi(x) \neq \varnothing$, for every vertex $x \in \gamma$. Then, there is a constant $M_Y$, which depends only on the topological type of $Y$, such that the diameter of $\pi(\gamma)$ is at
most $M_Y$.
\end{theorem}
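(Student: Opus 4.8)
This is Theorem~3.1 of Masur--Minsky \cite{Mas-Min2}; in the body of the paper it is invoked as a black box, and the plan here is only to sketch why it holds. One may assume $Y \subsetneq \Sigma$, since there is nothing to prove when $Y = \Sigma$, and I abbreviate $d_Y(u,v) := d_{\cC(Y)}(\pi(u),\pi(v))$ whenever both projections are defined. The soft input is a uniform Lipschitz-type estimate: if $u,v$ are essential simple closed curves with $i(u,v)=0$, both intersecting $Y$ essentially, then $d_Y(u,v) \leqslant C_0$ for a constant $C_0$ depending only on the topology. Indeed, realizing $u$ and $v$ disjointly, the regular neighborhoods of $u\cap Y$ and of $v\cap Y$, each unioned with a collar of $\partial Y$, overlap only near $\partial Y$, so their non-peripheral boundary components can be isotoped to intersect a bounded number of times; when $Y$ is an annulus one passes to the annular cover, where disjoint essential arcs crossing the compactified annulus have algebraic intersection at most $1$. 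A companion fact, proved the same way, is that $\pi$ is unchanged by a Dehn twist about any component of $\partial Y$, since twisting around $\partial Y$ only reshuffles $u\cap Y$ inside the collar that gets swallowed by the regular neighborhood.

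Applying the Lipschitz estimate to consecutive vertices of $\gamma = (x_0,\dotsc,x_n)$ gives $d_Y(\pi(x_i),\pi(x_{i+1})) \leqslant C_0$, hence the useless linear bound $d_Y(\pi(x_0),\pi(x_n)) \leqslant C_0 n$; the real work is to promote this to a bound independent of $n$. The mechanism is that a large value of $d_Y(\pi(x_0),\pi(x_n))$ is incompatible with the hypothesis that every vertex of $\gamma$ cuts $Y$. The cleanest route I know, following Hensel--Przytycki--Webb and Webb, is via surgery (``unicorn'') paths: putting $x_0$ and $x_n$ in minimal position and concatenating an initial sub-arc of $x_0$ with a terminal sub-arc of $x_n$ across a common intersection point produces a path $P(x_0,x_n)$ in $\cC(\Sigma)$ from $x_0$ to $x_n$ which is a uniform unparametrized quasi-geodesic --- so $\cC(\Sigma)$ is uniformly hyperbolic and, by the Morse lemma, $\gamma$ stays within a topological constant $R$ of $P(x_0,x_n)$ --- and, crucially, each surgered curve either misses $Y$ essentially or has $\pi$-image in a uniformly bounded set containing $\pi(x_0)$ and $\pi(x_n)$, because it is built from a piece of $x_0$ and a piece of $x_n$ and so its trace in $Y$ is squeezed between $x_0\cap Y$ and $x_n\cap Y$. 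Transporting this bound from $P(x_0,x_n)$ to $\gamma$ using the $C_0$-coarse-Lipschitz property along edges between $Y$-cutting curves (and the fact that all vertices of $\gamma$ cut $Y$) yields $d_Y(\pi(x_0),\pi(x_n)) \leqslant M_Y$; applying the same argument to every sub-segment of $\gamma$ gives $\diam \pi(\gamma) \leqslant M_Y$.

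The main obstacle is precisely the bookkeeping around the phrase ``cuts $Y$'': the coarse-Lipschitz property of $\pi$ applies only to a pair of curves both intersecting $Y$ essentially, so in transporting the estimate from $P(x_0,x_n)$ to $\gamma$ one must stay inside the subcomplex of $Y$-cutting curves, excising the surgered curves that fail to cut $Y$ (these are exactly the ones lying, after surgery, in a complementary component of $Y$). Establishing that the surviving surgered curves have uniformly bounded $Y$-projection --- with a bound that does not secretly depend on $i(x_0,x_n)$ or on $n$ --- is the genuine technical heart; this uses that a sub-arc of $x_0$ and a sub-arc of $x_n$ each project into $Y$ in a way controlled by $\pi(x_0)$ and $\pi(x_n)$, together with the twist-invariance noted above to kill the ambiguity from how the surgery meets $\partial Y$. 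The original argument of Masur and Minsky reaches the same conclusion without unicorns, by a more hands-on comparison of $\gamma$ with a standard path near $\partial Y$ and an induction on the complexity of $\Sigma$, with annuli and the one-holed torus / four-holed sphere as explicit base cases.
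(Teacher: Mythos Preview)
The paper does not prove this theorem; it is quoted as Theorem~3.1 of \cite{Mas-Min2} and used as a black box, exactly as you note at the outset. So there is no proof in the paper to compare against. Your unicorn-path sketch follows the more modern route of Hensel--Przytycki--Webb and Webb rather than the original hierarchy argument, which is a legitimate substitute.

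The central claim of your sketch, however, is misstated in a way that hides the real mechanism. You assert that each unicorn curve cutting $Y$ projects into ``a uniformly bounded set containing $\pi(x_0)$ and $\pi(x_n)$''; read literally this already says $d_Y(x_0,x_n)$ is bounded --- the conclusion --- and your justification (the trace in $Y$ is ``squeezed between $x_0\cap Y$ and $x_n\cap Y$'') does not establish it. What the surgery construction actually yields is that a unicorn curve cutting $Y$ projects uniformly close to $\pi(x_0)$ \emph{or} to $\pi(x_n)$, according to which parent contributes the arc landing in $Y$. The argument must then run: by the coarse Lipschitz bound along consecutive (disjoint) unicorn curves, one cannot pass from the $\pi(x_0)$-cluster to the $\pi(x_n)$-cluster unless some intermediate unicorn curve misses $Y$, hence lies adjacent to $\partial Y$ in $\cC(\Sigma)$. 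You then want to conclude that some vertex of $\gamma$ misses $Y$ --- but the Morse lemma only places a vertex of $\gamma$ within a fixed constant of $\partial Y$, which is not the same thing; closing this gap requires additional combinatorial work that your sketch does not supply. Finally, the Masur--Minsky original is not an ``induction on the complexity of $\Sigma$'' but goes through the tight-geodesic and hierarchy machinery developed in \cite{Mas-Min2}.
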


In particular, since there are only finitely many topological types of subsurfaces $Y$ in $\Sigma$, we can choose $M$ to be the maximum  over all $M_Y$. The constant $M$ is called the Masur-Minsky constant. Since the relative space $\widehat{G}$ is quasi-isometric to $\cC(Y)$, the bounded geodesic image theorem holds in $\widehat{G}$ also. Since, for the remainder of this section, we work in $\widehat{G}$, we continue to denote the Masur-Minsky constant in $\widehat{G}$ by $M$. We will think of a geodesic $\gamma$ in $G$ as a function $\gamma: \Z \to G$, and we will write $\gamma_n$ for $\gamma(n)$.

The set of simple closed curves in $\Sigma$ that are distance at most one from the set of boundary curves $\partial Y$ of $Y$, is a set of diameter at most 3 in $\cC(\Sigma)$. Let $\widehat{N}_Y$ be the pre-image of this set in $\widehat{G}$, under the quasi-isometry between $\widehat{G}$ and $\cC(Y)$. Then there is a positive constant $R_2$ such that the set $\widehat{N}_Y$ has diameter at most $3R_2$ in $\widehat{G}$. Recall from~\ref{Proj} that by defining the projection $\pi$ on $\widehat{G}$ using the marking complex, we ensure that $\pi$ is defined at all points of $\widehat{N}_Y$. However, the bounded geodesic image theorem~\ref{bounded-image} fails to hold for geodesics in $\widehat{G}$ that pass through $\widehat{N}_Y$.

Let $y_0 \in \cC(Y)$ be the image of the identity element $1$ in $G$, under the sub-surface projection $\pi$ i.e. $y_0 = \pi(1)$.

\begin{lemma}\label{measure}
There is a constant $K_4$, which depends only on $\delta$, such that there is a finite collection of half-spaces $H(1, x_i)$ with $\widehat{d}(1,x_i) = K_4$, such that for any subsurface $Y$ of $\Sigma$, the union of the half-spaces disjoint from $\widehat{N}_Y$ has harmonic measure at least $1/2$.
\end{lemma}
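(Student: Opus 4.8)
The plan is to fix once and for all a large radius $K_4 = r$, consider \emph{all} half-spaces $H(1,x)$ with $\widehat{d}(1,x) = r$, pass to a finite subcollection whose limit sets already carry $\nu$-mass at least $3/4$, and then show that for every subsurface $Y$ the half-spaces of this subcollection that meet $\widehat{N}_Y$ all have limit sets inside a single half-space of radius $r-K_2$, which by Proposition~\ref{zero} has $\nu$-mass at most $1/4$ once $r$ is large. (Here the harmonic measure of a half-space means the $\nu$-measure of its limit set in $\fmin$.)

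Two preliminary observations drive the argument. First, if $z \in H(1,x)$ then $\widehat{d}(1,z) \geq \tfrac12 \widehat{d}(1,x)$: otherwise $\widehat{d}(x,z) \geq \widehat{d}(1,x) - \widehat{d}(1,z) > \tfrac12\widehat{d}(1,x) > \widehat{d}(1,z)$, contradicting $z \in H(1,x)$. Hence, if a half-space $H(1,x)$ with $\widehat{d}(1,x)=r$ meets $\widehat{N}_Y$, then $\widehat{N}_Y$ contains a point at distance at least $r/2$ from $1$, and since $\widehat{N}_Y$ has relative diameter at most $3R_2$, \emph{every} point of $\widehat{N}_Y$ then lies at distance at least $r/2 - 3R_2$ from $1$. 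Second, by a standard fellow-traveling argument in the $\delta$-hyperbolic graph $\widehat{G}$, for $r$ larger than a constant multiple of $\delta$ every $\lambda \in \fmin$ lies in $\overline{H(1,x)}$ for some $x \in G$ with $\widehat{d}(1,x) = r$: take a sequence $g_n \in G$ representing $\lambda$, let $x$ be the vertex at distance exactly $r$ from $1$ on a geodesic $[1,g_n]$ for one large $n$ (such a vertex exists because $\widehat{d}$ is integer-valued on $G$), and use that for all large $m$ the geodesic $[1,g_m]$ fellow-travels $[1,g_n]$ well past distance $r$ to conclude $g_m \in H(1,x)$, hence $\lambda \in \overline{H(1,x)}$. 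Thus $\fmin$ is the union of the countably many sets $\overline{H(1,x)}$ with $x \in G$, $\widehat{d}(1,x)=r$, and since $\nu$ is a probability measure there is a finite subcollection $H(1,x_1),\dots,H(1,x_N)$, all with $\widehat{d}(1,x_i)=r$, such that $\nu\bigl(\bigcup_{i=1}^N \overline{H(1,x_i)}\bigr) \geq 3/4$.

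Now fix a subsurface $Y$. If no $H(1,x_i)$ meets $\widehat{N}_Y$ there is nothing to prove, so suppose some $H(1,x_i)$ does. Choose any $a \in \widehat{N}_Y$; by the first observation $\widehat{d}(1,a) \geq r/2 - 3R_2$, which exceeds $K_1$ for $r$ large. Applying Proposition~\ref{boundedset} with the half-space $H(1,a)$ and the set $X = \widehat{N}_Y$ (of relative diameter at most $3R_2$ and containing $a \in H(1,a)$) produces a point $b$ with $\widehat{d}(1,b) = \widehat{d}(1,a) - 6R_2 - K_1 \geq r/2 - 9R_2 - K_1$ and $\widehat{N}_Y \subseteq H(1,b)$; for $r$ large this gives $\widehat{d}(1,b) \geq K_2$. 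Then Proposition~\ref{nbhd}, applied to $H(1,b)$, yields a point $c$ with $\widehat{d}(1,c) = r - K_2$ such that every half-space $H(1,x)$ with $\widehat{d}(1,x)=r$ meeting $H(1,b)$ is contained in $H(1,c)$; in particular every $H(1,x_i)$ meeting $\widehat{N}_Y \subseteq H(1,b)$ satisfies $\overline{H(1,x_i)} \subseteq \overline{H(1,c)}$. The crucial point is that $\widehat{d}(1,c) = r - K_2$ is the same for all $Y$, so by Proposition~\ref{zero} we may fix $K_4 = r$ large enough (depending only on $\delta$ and the fixed data) that $\nu(\overline{H(1,c)}) \leq 1/4$ regardless of $Y$. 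Consequently
\[
\nu\!\Bigl(\bigcup_{i \,:\, H(1,x_i)\cap\widehat{N}_Y=\varnothing} \overline{H(1,x_i)}\Bigr)
\ \geq\ \nu\!\Bigl(\bigcup_{i=1}^{N}\overline{H(1,x_i)}\Bigr) - \nu(\overline{H(1,c)})
\ \geq\ \tfrac34 - \tfrac14 \ =\ \tfrac12 ,
\]
which is the assertion of the lemma.

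The only genuinely delicate point is the uniformity in $Y$: the absorbing half-space must have radius bounded below by a quantity independent of $Y$. This is exactly what the two-step absorption gives — Proposition~\ref{boundedset} swallows the bounded set $\widehat{N}_Y$ into a half-space whose radius is controlled once we know (from the first observation) that $\widehat{N}_Y$ lies far from $1$, and Proposition~\ref{nbhd} then swallows every radius-$r$ half-space meeting it into a half-space of radius exactly $r - K_2$. One should also be careful about the order of the choices: $K_4 = r$ is fixed first, using Proposition~\ref{zero} to make the error at most $1/4$ uniformly, and only afterwards is the finite subcollection chosen so that it already exhausts $3/4$ of the measure.
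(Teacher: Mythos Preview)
Your proof is correct and follows essentially the same two-step absorption strategy as the paper: cover most of $\fmin$ by finitely many half-spaces of radius $K_4$, swallow $\widehat{N}_Y$ into one controlled half-space via Proposition~\ref{boundedset}, and then swallow all radius-$K_4$ half-spaces meeting it into a single half-space via Proposition~\ref{nbhd} whose measure is small. The only noteworthy variations are that you invoke the qualitative Proposition~\ref{zero} rather than the quantitative exponential decay of Proposition~\ref{half-decay} (which indeed suffices here, since Proposition~\ref{zero} already gives a bound uniform over all $x$ at a fixed distance), and that you anchor the first absorption at a point $a\in\widehat{N}_Y$ rather than at the half-space $H(1,x_i)$ itself as the paper does; the paper's choice yields the slightly better radius $K_4-3R_2-K_1$ for the intermediate half-space, but either route gives a final absorbing half-space of radius bounded below independently of $Y$, which is all that is needed.
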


\begin{proof}
First, we want $K_4 > K_3$, so that Proposition~\ref{half-decay} can be applied. Second, by choosing $K_4$ sufficiently large such that $L^{K_4 -  3R_2 - K_1 - K_2} < 1/4$, we can bound the harmonic measure of any half-space $H(1,a)$ with $\widehat{d}(1,a) = K_4$, from above by 1/4. The collection of endpoints of geodesic rays based at $1$ is dense in $\fmin$. This implies that for any $K_4 > 0$,
\[
\nu(\cup_{\widehat{d}(1,x) = K_4} \overline{H(1, x)}) = 1
\]
Therefore, for any $\epsilon > 0$, there is a finite collection $\{H(1, x_i) \}_{1 \leqslant  i \leqslant  N}$ of half-spaces, with $\widehat{d}(1,x_i) = K_4$, such that $\nu(\cup_i \overline{H(1, x_i)} \geqslant 1 - \epsilon$. It will be convenient
for us to choose $\epsilon = 1/4$. Suppose $\widehat{N}_Y$ hits some half-space $H(1, x_i)$ in the finite collection. Since $K_4 \geqslant 3R_2 + K_1$, we can apply Proposition~\ref{boundedset} to conclude there is a $y \in [1, x_i]$ with $\widehat{d}(1,y) = K_4 - 3R_2 - K_1$ such that the union $H(1, x_i) \cup \widehat{N}_Y$ belongs to the half-space $H(1, y)$. By Proposition~\ref{nbhd}, there is a half-space $H(1,z)$ with $\widehat{d}(1,z) = K_4 - 3R_2 - K_1 - K_2$ such that any half-space $H(1, x_j)$ with $\widehat{d}(1,x_j) \geqslant K_4$, intersecting $H(1, y)$, is contained in $H(1, z)$. By Proposition~\ref{half-decay}, the harmonic measure of half-spaces decays exponentially in the relative distance. So
$\nu(\overline{H(1, z)} \leqslant  L^{K_4 - 3R_2 - K_1 - K_2}< 1/4$. So, the measure of the half-spaces disjoint from $\widehat{N}_Y$ is at least 1/2.
\end{proof}

\begin{lemma}\label{half-bounded}
Given the finite collection of half-spaces as in Lemma~\ref{measure}, there is a constant $K_5$, depending on the collection, such that for any subsurface $Y$, the projection of the union of the half-spaces disjoint from $\widehat{N}_Y$ is contained in the $K_5$-neighborhood of $y_0= \pi(1)$ in $\cC(Y)$.
\end{lemma}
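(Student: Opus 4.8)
The plan rests on the bounded geodesic image theorem together with the fact that $\widehat{d}(1,x_i)=K_4$ is a fixed finite number. Since the collection $\{H(1,x_i)\}$ produced by Lemma~\ref{measure} is finite and chosen once and for all, it is enough to find, for each $i$ separately, a constant $C_i$ depending only on $\delta$, the Masur--Minsky constant $M$, $K_4$, $R_1$, $R_2$ and the quasi-isometry constants, such that \emph{whenever} $H(1,x_i)$ happens to be disjoint from $\widehat{N}_Y$ and $a\in H(1,x_i)$, one has $d_Y(\pi(a),y_0)\le C_i$; then $K_5=\max_i C_i$ works uniformly in $Y$. So fix such an $i$, a point $a\in H(1,x_i)$ with $H(1,x_i)\cap\widehat{N}_Y=\varnothing$, and a geodesic $\gamma=[1,a]$ in $\widehat{G}$.

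First I would localize the part of $\gamma$ that can come near $\widehat{N}_Y$. Let $R$ be large enough that the bounded geodesic image theorem (Theorem~\ref{bounded-image}), transported to $\widehat{G}$ through the quasi-isometry $\widehat{G}\simeq\cC(\Sigma)$, applies to every geodesic of $\widehat{G}$ disjoint from the $R$-neighborhood $N_R(\widehat{N}_Y)$ of $\widehat{N}_Y$. For $n\in\widehat{N}_Y$ we have $n\notin H(1,x_i)$, so the contrapositive of the last sentence of Proposition~\ref{half} puts the nearest point of $n$ on $[1,x_i]$ within $\tfrac12 K_4+3\delta$ of $1$; and $a\in H(1,x_i)$ puts, by the first part of Proposition~\ref{half}, the nearest point of $a$ on $[1,x_i]$ in the far half, at distance at least $\tfrac12 K_4-3\delta$ from $1$. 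Thinness of the triangle on $\{1,x_i,n\}$ then shows that $[1,n]$ stays close to $[1,x_i]$ only along its first $\tfrac12 K_4+O(\delta)$, after which it peels off near its nearest-point projection, while $\gamma$ itself begins by following $[1,x_i]$; it follows that the Gromov product $(w\mid n)_1$ is at most $\tfrac12 K_4+O(\delta)$ for all $w\in\gamma$ and all $n\in\widehat{N}_Y$, i.e. $\widehat{d}(w,n)\ge\widehat{d}(1,w)-K_4-O(\delta)$. Hence, with $L_0:=K_4+R+O(\delta)$, every $w\in\gamma$ with $\widehat{d}(1,w)\ge L_0$ lies outside $N_R(\widehat{N}_Y)$.

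Now I would split $\gamma$ at the point $w$ with $\widehat{d}(1,w)=\min(L_0,\widehat{d}(1,a))$. The tail $\gamma|_{[w,a]}$ avoids $N_R(\widehat{N}_Y)$, so the bounded geodesic image theorem gives $d_Y(\pi(w),\pi(a))\le M'$ for a constant $M'$ depending only on $M$ and the quasi-isometry constants. For the initial segment $\gamma|_{[1,w]}$, which has relative length at most $L_0$ and hence image of diameter at most $R_2L_0+R_2$ in $\cC(\Sigma)$, I would argue $d_Y(\pi(1),\pi(w))\le C'$ for a constant $C'$ depending only on $\delta,K_4,R_1,R_2$: along the portions of this segment lying outside $N_R(\widehat{N}_Y)$ the projection $\pi$ is coarsely $2$-Lipschitz, while on the complementary portions, which all sit inside a fixed relative neighborhood of $\widehat{N}_Y$, one uses that the marking-complex description of $\pi$ is defined everywhere on $\widehat{G}$ and coarsely $2R_1$-Lipschitz in the word metric, together with the above bound on the relative diameter. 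Adding the two estimates gives $d_Y(y_0,\pi(a))\le C'+M'=:C_i$, as required.

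The step I expect to be the real obstacle is the control of $\pi$ on the initial segment $\gamma|_{[1,w]}$: it is short in the relative metric but can be long in the word metric and can repeatedly graze $\widehat{N}_Y$, where bounded geodesic image is unavailable, so the uniform bound there must be squeezed out of the bounded relative diameter together with the everywhere-defined marking-complex projection. By contrast, the localization estimate for $\gamma$ in the second paragraph is a routine $\delta$-hyperbolicity computation resting on Proposition~\ref{half}, and the application of bounded geodesic image to the tail is immediate.
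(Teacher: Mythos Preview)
Your localization of the difficulty is correct, and the tail estimate via bounded geodesic image is indeed routine once the far part of $[1,a]$ is shown to avoid $\widehat{N}_Y$. The genuine gap is exactly where you flagged it: the initial segment $\gamma|_{[1,w]}$. You propose to combine (i) the fact that $\pi$ is $2R_1$-Lipschitz in the \emph{word} metric with (ii) the fact that this segment has bounded \emph{relative} diameter. These two do not mesh. A single step of a relative geodesic may be multiplication by an element of a curve stabilizer $H_j$, for instance a high power of the Dehn twist $D_x$ about $x=\partial Y$; that step has relative length $1$ but arbitrarily large word length, and by \eqref{equivariance} it moves the $\cC(Y)$-projection by an arbitrarily large amount. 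So neither the ``coarsely $2$-Lipschitz outside $N_R(\widehat{N}_Y)$'' assertion (false in the relative metric) nor the word-metric Lipschitz property (useless without a word-length bound) controls $d_Y$ on the initial segment. Since the splitting point $w$ depends on $a$, which ranges over all of $H(1,x_i)$, there is no uniform bound on $d(1,w)$ in the word metric, and your argument for $C'$ breaks down.

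The paper sidesteps this by not using the geodesic $[1,a]$ at all. It routes through the \emph{fixed} point $x_i$: every $a\in H(1,x_i)$ is joined to $1$ by the concatenation $[1,x_i]\cup[x_i,a]$. The second piece lies, up to quasi-convexity of half-spaces, in $H(1,x_i)$, which is disjoint from $\widehat{N}_Y$ by hypothesis, so bounded geodesic image gives $d_Y(\pi(x_i),\pi(a))\le M$. For the first piece the decisive observation is that the $x_i$ are finitely many group elements chosen once and for all; hence the \emph{word} distance $d(1,x_i)$ is a definite number, and the quasi-distance formula (Theorem~\ref{Quasi-Distance}) bounds $d_Y(\pi(1),\pi(x_i))$ uniformly over all subsurfaces $Y$. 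Setting $K_5=M+M'$ with $M'=\max_i d(1,x_i)$ (up to the cutoff constant $A$) finishes the proof. The point you were missing is that one must arrange for the endpoints of the ``dangerous'' segment to be fixed group elements, so that the word distance, not merely the relative distance, is controlled.
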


\begin{proof}
Every point in a half-space $H(1,x_i)$ can be connected to 1 by a piecewise geodesic path with at most two pieces: the geodesic $[1,x_i]$ followed by a geodesic connecting $x_i$ to the point. Consider half-spaces in the collection that are disjoint from $\widehat{N}_Y$. If $\widehat{N}_Y$ does not hit the geodesic $[1, x_i]$, then by Theorem~\ref{bounded-image}, the projection of the union $H(1, x_i) \cup [1,x_i]$ has bounded image in $\cC(Y)$ with diameter at most $2M$. So suppose that $\widehat{N}_Y$ hits the geodesic $[1, x_i]$. Since there is a fixed collection of finitely many such geodesics, we can set
\[
M' = \max \left(\max_i(d(1,x_i)), M, A \right)
\]
where $d$ is the actual distance in $G$ and $A$ is the cutoff in the quasi-distance formula~\ref{quasi-distance}. By the quasi-distance formula, up to a universal constant that depends only on the topology of $\Sigma$, for any sub-surface $Y$, the projection of all geodesic segments $[1,x_i]$ lies in a $M'$-neighborhood of $y_0$. So we may choose $K_5 = M + M'$ to conclude the proof of the lemma.
\end{proof}

\subsubsection{Exponential decay:}

Recall that we have chosen a base-point $y_0$ in $\cC(Y)$ to be the image under $\pi$ of the identity element $1 \in G$. Let $d_Y$ denote the metric in $\cC(Y)$.

Given a subset $A$ in $\cC(Y)$, we will write $A^c$ to be the complement of $A$ in $\cC(Y)$. We say a pair of sets $A_1 \supset A_2$ in $\cC(Y)$ is {\em $K$-nested} if $d_Y(A_1^c, A_2) \geqslant K$. We say a nested collection of sets $A_1 \supset A_2 \supset \cdots$ is $K$-nested, if each adjacent pair $A_i \supset A_{i+1}$ is $K$-nested in $\cC(Y)$.

A pair of sets $B_1 \supset B_2$ in $\widehat{G}$ is $K$-nested for $\pi$ if the image pair $A_1 = \pi(B_1)$ containing $A_2 = \pi(B_2)$ is $K$-nested in $\cC(Y)$. It should be pointed out that such a pair $B_1 \supset B_2$ is not necessarily $K$-nested for the relative metric $\widehat{d}$ on $\widehat{G}$.

The main theorem due to Maher \cite{Mah4} is

\begin{theorem}[Exponential Decay]\label{exp-decay}
Let $\mu$ be a probability distribution on $G$, with finite support, such that the sub-group generated by the support is non-elementary. Let $Y$ be a sub-surface of $\Sigma$. Then there is a constant $K_6$, which depends on $\mu$ but is independent of the sub-surface $Y$, such that if $\pi(1) \notin A_1 \supset A_2 \supset \cdots$ is a collection of $K_6$-nested subsets in $\cC(Y)$, then $\nu(\overline{\pi^{-1}(A_k)}) \leqslant  (1/2)^k$.
\end{theorem}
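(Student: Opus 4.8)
The plan is to prove the estimate by induction on the nesting depth $k$, losing a factor $\tfrac12$ (in fact a factor as small as we wish) at each level via an \emph{escape estimate} assembled from Lemma~\ref{measure}, Lemma~\ref{half-bounded} and the bounded geodesic image theorem (Theorem~\ref{bounded-image}), with the induction itself driven by the $\mu$-stationarity of $\nu$, the (strong) Markov property of the walk, and the equivariance $g^{-1}\overline{\pi_Y^{-1}(A)}=\overline{\pi_{g^{-1}Y}^{-1}(g^{-1}A)}$ of subsurface projection. Throughout, $\pi=\pi_Y$, $y_0=\pi(1)$, and $\overline{\pi^{-1}(A)}$ denotes the limit set in $\fmin=\partial\cC(\Sigma)$ of $\pi^{-1}(A)\subseteq\widehat G$. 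Since $\supp\mu$ is finite it has bounded diameter in $G$, so (as $\pi$ is coarsely Lipschitz) there is a constant $C$, depending only on $\mu$ and the topology, with $d_Y(\pi(g),y_0)\leqslant C$ for all $g\in\supp\mu$ and all $Y$. The constant $K_6$ is fixed at the very end, large compared with $K_5$, the Masur--Minsky constant $M$, a bounded multiple of $\delta$, and $C$.

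The heart of the matter is the escape estimate: \emph{for any $Y$ and any $A$ with $d_Y(y_0,A)\geqslant K_6$ one has $\nu(\overline{\pi^{-1}(A)})\leqslant\tfrac12$}, and by enlarging $K_4$ in Lemma~\ref{measure} one may replace $\tfrac12$ here by any $\varepsilon>0$. Indeed, Lemma~\ref{measure} gives a finite family of half-spaces $H(1,x_i)$ with $\widehat d(1,x_i)=K_4$ whose subfamily $\mathcal U_Y$ of those disjoint from $\widehat N_Y$ satisfies $\nu\!\big(\bigcup_{H\in\mathcal U_Y}\overline H\big)\geqslant\tfrac12$, and by Lemma~\ref{half-bounded} each $H\in\mathcal U_Y$ projects into the $K_5$-neighbourhood of $y_0$ in $\cC(Y)$; so it suffices that $\overline{\pi^{-1}(A)}$ be disjoint from each such $\overline H$. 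If $\lambda$ lay in both, then (by Proposition~\ref{half}, as $\lambda\in\overline H$ and $H$ is based at $1$) a geodesic ray $[1,\lambda)$ would eventually lie in $H$, hence project within $K_5$ of $y_0$, hence stay at $\cC(Y)$-distance $\geqslant K_6-K_5$ from $A$; but $\lambda$ is also a limit of $z_j\in\pi^{-1}(A)$, and since $d_Y(y_0,\pi(z_j))\geqslant K_6>M$, Theorem~\ref{bounded-image} forces each $[1,z_j]$ to meet $\widehat N_Y$ and forces the subsegment of $[1,z_j]$ after its last visit to $\widehat N_Y$ to project within $M$ of $\pi(z_j)\in A$. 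As the $[1,z_j]$ accumulate on $[1,\lambda)$ with last visits confined to the bounded set $\widehat N_Y$, the ray $[1,\lambda)$ acquires points projecting within $M+O(\delta)$ of $A$, contradicting $K_6-K_5>M+O(\delta)$. Read with the roles of ``shallow'' and ``deep'' reversed, the same computation gives a \emph{persistence estimate}: if $d_Y(y_0,A^c)\geqslant K_6$ then $\nu(\overline{\pi^{-1}(A)})\geqslant 1-\varepsilon$.

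For the induction, let $P(k)$ be the supremum of $\nu(\overline{\pi_Y^{-1}(A_k)})$ over all $Y$ and all $K_6$-nested chains $A_1\supset\cdots\supset A_k$ in $\cC(Y)$ with $y_0\notin A_1$. Then $P(0)\leqslant 1$, and for $k\geqslant 2$ we already get $P(k)\leqslant\varepsilon$, since $\overline{\pi_Y^{-1}(A_k)}\subseteq\overline{\pi_Y^{-1}(A_2)}$ and $d_Y(y_0,A_2)\geqslant K_6$ by $K_6$-nesting, so the escape estimate applies to $A_2$. To get the geometric decay, fix a depth-$k$ chain with $k\geqslant 2$, let $\sigma$ be the first time $\pi_Y(\omega_n)$ lies deep in $A_1$ (say at distance $\geqslant K_6$ from $A_1^c$), and apply the strong Markov property: the future increments form an independent $\mu$-walk $\omega^{(\sigma)}$ with $F(\omega)=\omega_\sigma F(\omega^{(\sigma)})$, so $F(\omega)\in\overline{\pi_Y^{-1}(A_k)}$ iff $F(\omega^{(\sigma)})\in\overline{\pi_{Y'}^{-1}(A_k')}$ with $Y'=\omega_\sigma^{-1}Y$, $A_j'=\omega_\sigma^{-1}A_j$; by the choice of $\sigma$ and bounded step size, $d_{Y'}(\pi_{Y'}(1),A_2')\geqslant K_6$, so $A_2'\supset\cdots\supset A_k'$ is a valid depth-$(k-1)$ chain and the conditional probability of that event is $\leqslant P(k-1)$. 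One checks $\sigma<\infty$ on $\{F(\omega)\in\overline{\pi_Y^{-1}(A_k)}\}$ (the walk converges to $F(\omega)$ and fellow-travels $[1,F(\omega))$, which the escape-estimate analysis shows runs $\pi$-deep into $A_1$), while $\mathbb P(\sigma<\infty)$ is bounded by the escape/persistence estimate — after replacing ``the projection ever reaches $A_1$ deeply'' by ``$F(\omega)$ lies in $\overline{\pi^{-1}}$ of a slightly shrunken region'', using persistence from $\omega_\sigma$ — so $\mathbb P(\sigma<\infty)\leqslant 2\varepsilon$. Multiplying gives $P(k)\leqslant 2\varepsilon\,P(k-1)$, hence $P(k)\leqslant(1/2)^k$ upon taking $\varepsilon\leqslant\tfrac14$ in advance and invoking the trivial bounds for $k\leqslant 1$.

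I expect the main obstacle to be exactly this last piece of bookkeeping: passing between statements about \emph{where the random walk goes} (what $\nu(\overline{\pi^{-1}(\cdot)})$ records) and statements about \emph{where a geodesic to the limit goes}, and whether it crosses $\widehat N_Y$ (what Theorem~\ref{bounded-image} and the half-space estimates control), given that bounded geodesic image fails on $\widehat N_Y$ and that $\widehat d(1,\widehat N_Y)$ is not uniform in $Y$ — which is precisely why everything must be phrased through half-spaces based at $1$ and the stationarity of $\nu$ rather than through a direct estimate in a fixed metric. Finally, $K_6$ has to be chosen so as to simultaneously close the geodesic-ray contradiction in the escape estimate and to keep every translated chain $A'_\bullet$ valid; since each appeal to Lemma~\ref{measure}, Lemma~\ref{half-bounded} and Theorem~\ref{bounded-image} yields constants depending only on $\mu$ and the topology of $\Sigma$, and not on $Y$ or the chain, such a $K_6$ exists.
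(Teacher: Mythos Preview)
Your escape and persistence estimates are essentially the paper's Lemma~\ref{half-estimate} (translated by an arbitrary $g$), and the overall scheme of stopping the walk, applying strong Markov, and using equivariance of subsurface projection is the same as the paper's. But the inductive step has a genuine gap.

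You set $\sigma$ to be the first time $d_Y(\pi_Y(\omega_n),A_1^c)\geqslant K_6$ and then assert that $d_{Y'}(\pi_{Y'}(1),A_2')\geqslant K_6$, i.e.\ $d_Y(\pi_Y(\omega_\sigma),A_2)\geqslant K_6$. This is false. Bounded step size only gives $K_6\leqslant d_Y(\pi_Y(\omega_\sigma),A_1^c)<K_6+C$; the nesting hypothesis $d_Y(A_1^c,A_2)\geqslant K_6$ says that $A_2$ \emph{also} sits at distance $\geqslant K_6$ from $A_1^c$ --- it does not separate $\pi_Y(\omega_\sigma)$ from $A_2$. Take $\cC(Y)=\Z$, $A_1=[1,\infty)$, $A_2=[K_6,\infty)$: then $\pi_Y(\omega_\sigma)\in[K_6,K_6+C)$ lies \emph{inside} $A_2$, the translated chain fails the hypothesis $\pi_{Y'}(1)\notin A_2'$, and $P(k-1)$ cannot be invoked. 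Your two desiderata --- ``deep enough in $A_1$ for persistence to bound $\mathbb P(\sigma<\infty)$'' and ``not yet in $A_2$ so the shorter chain is valid'' --- are in tension with the threshold you chose.

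The paper resolves this by stopping not at the deep layer but at the \emph{midpoint set} $E_k=\{g:\,|d_Y(\pi(g),A_{k+1})-d_Y(\pi(g),A_k^c)|\leqslant 2R_1D\}$, which by construction is far from both $A_k^c$ and $A_{k+1}$ once $K_6$ is large. From any $g\in E_k$ one has $B_{K_5}(\pi(g))\subset A_k\setminus A_{k+1}$, so Lemma~\ref{half-estimate} gives $\nu_g(\overline{B_k}\setminus\overline{B_{k+1}})>\tfrac12$ directly, hence the \emph{conditional} bound $\nu_g(\overline{B_{k+1}})/\nu_g(\overline{B_k})\leqslant\tfrac12$. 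Integrating over the first hit of $E_k$ yields $\nu(\overline{B_{k+1}})\leqslant\tfrac12\,\nu(\overline{B_k})$ without ever estimating $\mathbb P(\text{hit }E_k)$ on its own. Your factorization $P(k)\leqslant\mathbb P(\sigma<\infty)\cdot P(k-1)$ can be salvaged by redefining $\sigma$ to be the first time at distance roughly $K_6/2$ from $A_1^c$ (so that both the chain validity and, with suitably rescaled thresholds, the persistence-then-escape bound on $\mathbb P(\sigma<\infty)$ go through), but this is precisely the midpoint idea and one might as well run the paper's cleaner conditional computation.
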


Let $H(1, x_i)$ be a collection of half-spaces as in Lemmas~\ref{measure} and~\ref{half-bounded} above, and let $K_5$
be the corresponding constant from Lemma~\ref{half-bounded}. Let $\nu_g$ for the harmonic measure for starting at base-point $g$ instead of $1$, so $\nu_g(X) = \nu(g^{-1}X)$.

\begin{lemma}\label{half-estimate}
For any sub-subsurface $Y$,
\[
\nu_g\left(\overline{\pi^{-1}(B_{K_5}(\pi(g)))}\right) > \frac{1}{2}
\]
where $B_{K_5}(\pi(g))$ is the ball in $\cC(Y)$ of radius $K_5$, centered at $\pi(g)$.
\end{lemma}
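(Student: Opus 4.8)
The plan is to reduce the assertion to the case $g=1$, where it is precisely the content of Lemmas~\ref{measure} and~\ref{half-bounded}, by exploiting the (coarse) equivariance of the subsurface projection under the mapping-class-group action. First recall from the paragraph preceding the lemma that $\nu_g(X)=\nu(g^{-1}X)$; since $g^{-1}$ acts as a homeomorphism of $\fmin$ it commutes with taking limit sets, so
\[
\nu_g\!\left(\overline{\pi^{-1}(B_{K_5}(\pi(g)))}\right) \;=\; \nu\!\left(\overline{\,g^{-1}\pi^{-1}(B_{K_5}(\pi(g)))\,}\right).
\]
Writing $\pi_Z$ for the subsurface projection to $\cC(Z)$ with $Z:=g^{-1}Y$, and letting $g\colon \cC(Z)\to\cC(Y)$ be the induced isometry, coarse equivariance gives $\pi(g\xi)=g\,\pi_Z(\xi)$ and $\pi(g)=g\,\pi_Z(1)$ up to an additive error depending only on the topology of $\Sigma$. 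Hence $\xi\in g^{-1}\pi^{-1}(B_{K_5}(\pi(g)))$ exactly when $d_Z(\pi_Z(\xi),\pi_Z(1))$ is within $K_5$ of $0$ up to that bounded error. Absorbing this discrepancy into the constant $K_5$ of Lemma~\ref{half-bounded} at the outset — it depends only on the fixed finite collection of half-spaces and the topology of $\Sigma$ — we get
\[
\nu_g\!\left(\overline{\pi^{-1}(B_{K_5}(\pi(g)))}\right) \;=\; \nu\!\left(\overline{\pi_Z^{-1}(B_{K_5}(\pi_Z(1)))}\right),
\]
so the problem is reduced to the base point $1$ and the subsurface $Z$.

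It then remains to bound the right-hand side from below, and here I would apply Lemmas~\ref{measure} and~\ref{half-bounded} directly to $Z$. Let $U$ be the union of those half-spaces $H(1,x_i)$ of the fixed finite collection that are disjoint from $\widehat N_Z$. By Lemma~\ref{half-bounded}, $\pi_Z(U)\subseteq B_{K_5}(\pi_Z(1))$, so $U\subseteq \pi_Z^{-1}(B_{K_5}(\pi_Z(1)))$, and by monotonicity of the limit set $\overline{H(1,x_i)}\subseteq \overline{\pi_Z^{-1}(B_{K_5}(\pi_Z(1)))}$ for every such $i$. By Lemma~\ref{measure} the union of the limit sets of these half-spaces has harmonic measure at least $1/2$; in fact the inequality is strict, since in the proof of Lemma~\ref{measure} all the half-spaces of the collection that meet $\widehat N_Z$ lie inside a single half-space $H(1,z)$ with $\nu(\overline{H(1,z)})<1/4$, while the whole collection satisfies $\nu\bigl(\bigcup_i\overline{H(1,x_i)}\bigr)\ge 3/4$. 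Combining these gives $\nu\bigl(\overline{\pi_Z^{-1}(B_{K_5}(\pi_Z(1)))}\bigr)>1/2$, which is the desired estimate.

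The only point that genuinely requires care is the coarseness bookkeeping in the first step: subsurface projection on $G$ (equivalently on the marking complex and on $\widehat G$) is only coarsely defined and only coarsely equivariant, so the clean identity reducing to $g=1$ holds only once $K_5$ has been chosen large enough to swallow a bounded, topology-dependent error. Everything after that reduction is an immediate combination of Lemmas~\ref{measure} and~\ref{half-bounded}, with the strict inequality coming for free from the strict bound $\nu(\overline{H(1,z)})<1/4$ already established in the proof of Lemma~\ref{measure}.
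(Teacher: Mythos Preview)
Your proof is correct and follows essentially the same approach as the paper: both arguments reduce to Lemmas~\ref{measure} and~\ref{half-bounded} via the left $G$-action, the paper by translating the half-spaces to $H(g,gx_i)$ while keeping $Y$ fixed, and you by pulling back to base point $1$ and the subsurface $Z=g^{-1}Y$ --- these are two equivalent formulations of the same translation argument. Your explicit discussion of the coarse-equivariance bookkeeping and of why the inequality is strict is a careful touch that the paper leaves implicit.
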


\begin{proof}
Consider the projection into $\cC(Y)$ of the half-spaces $H(g, g x_i)$ disjoint from $\widehat{N}_Y$. By Lemma~\ref{half-bounded}, the projection of the union of these half-spaces lies in a $K_5$-neighborhood of $\pi(g)$ in $\cC(Y)$. By Lemma~\ref{measure}, $\pi^{-1}(B_{K_5}(\pi(g)))$ contains all the half-spaces $H(g , g x_i)$ disjoint from $\widehat{N}_Y$, and so has measure at least $1/2$.
\end{proof}

Let $D$ be the diameter of the support of $\mu$. We choose $K_6 > K_5 + 2D$. We can now do conditional measure on $B_i = \pi^{-1}(A_i)$ of $K_6$-nested collections of sets for $\pi$. Given such a nested collection $B_i$ for $\pi$, one can define a ``midpoint set'' to be the set $\{ g \in B_i \mid d_Y(\pi(g), A_{i+1} ) = d_Y(\pi(g), A_i^c ) \} $. However, for the subsequent argument, we need a neighborhood of this which covers a $2R_1D$-neighborhood of the projection in $\cC(Y)$, where recall from the previous section that the map $\pi$ on $G$ is coarsely $2R_1$-Lipshitz. So we make the following definition.

\begin{definition}
Let $\{B_i\}$ be a nested collection of sets for $\pi$ with a large enough nesting distance. Define the {\em midpoint sets} to be
\[
E_i = \{ g \in B_i : \vert d_Y(\pi(g), A_{i+1}) - d_Y(\pi(g), A_i^c )\vert \leqslant  2R_1D \}
\]
where recall from 9.3 that $\pi$ is coarsely $2R_1$-Lipshitz on $G$.
\end{definition}

\begin{proposition} \label{midpoint}
Choose $K_6 > 2R_1+M$, and let $(B_i)$ be a $K_6$-nested collection of sets for $\pi$. If a sample path $\p$ converges to $\overline{B_{i+1}}$, then for some $n$, the point $\omega_n$ lies in a midpoint set $E_i$.
\end{proposition}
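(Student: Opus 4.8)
The plan is to prove this by a discrete intermediate value argument run along the sample path $\p=\{\omega_n\}$ for the real sequence
\[
f(n)\;=\;d_Y\bigl(\pi(\omega_n),A_{i+1}\bigr)-d_Y\bigl(\pi(\omega_n),A_i^c\bigr).
\]
By the definition of the midpoint set, $\omega_n\in E_i$ exactly when $\omega_n\in B_i$ and $|f(n)|\leqslant 2R_1D$. So the goal reduces to three assertions about the sequence $f(0),f(1),\dotsc$: it changes slowly, it starts out large and positive, and it eventually becomes very negative; the last assertion is the one that uses the hypothesis that $\p$ converges to $\overline{B_{i+1}}$.

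The first two assertions are direct. Since $\omega_{n-1}^{-1}\omega_n$ has word length at most $D$ and $\pi$ on $G$ is coarsely $2R_1$-Lipshitz, consecutive projections satisfy $d_Y(\pi(\omega_{n-1}),\pi(\omega_n))\leqslant 2R_1D$ (absorbing a bounded additive term); as $x\mapsto d_Y(x,A_{i+1})$ and $x\mapsto d_Y(x,A_i^c)$ are $1$-Lipshitz on $\cC(Y)$, this yields $|f(n)-f(n-1)|\leqslant 4R_1D$, which is exactly twice the threshold used to cut out $E_i$. For positivity: $\pi(\omega_0)=\pi(1)=y_0$ and $y_0\notin A_1\supseteq A_i$, so $y_0\in A_i^c$ and $d_Y(y_0,A_i^c)=0$, while $d_Y(y_0,A_{i+1})\geqslant d_Y(A_i^c,A_{i+1})\geqslant K_6$ by $K_6$-nestedness; hence $f(0)\geqslant K_6$, which is larger than $2R_1D$ because $K_6$ has been chosen larger than all the constants at play (in particular $K_6>2R_1+M$, and we also require $K_6>2R_1D$).

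The real work is to show $f(m)<-2R_1D$ for some $m$, i.e.\ that the walk eventually brings $\pi(\omega_m)$ to within a controlled $\cC(Y)$-distance of $A_{i+1}$. Write $\xi=F(\p)$ for the limit of the path. Since $\xi\in\overline{B_{i+1}}$ there is a sequence $b_m\in B_{i+1}=\pi^{-1}(A_{i+1})$ with $b_m\to\xi$, so $\pi(b_m)\in A_{i+1}$ for every $m$. Combining $\omega_n\to\xi$, $b_m\to\xi$ and $\delta$-hyperbolicity of $\widehat{G}$, for suitable indices $\omega_n$ lies within uniformly bounded relative distance of a geodesic ray towards $\xi$ that also fellow-travels $[1,b_m]$; applying the bounded geodesic image theorem~\ref{bounded-image} to the portions of these rays lying outside $\widehat{N}_Y$, together with the coarse Lipshitz property of $\pi$, produces a constant $C$ — depending only on the topology and on $\mu$ — with $d_Y(\pi(\omega_n),A_{i+1})\leqslant C$ for some $n$. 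Because $A_{i+1}$ is $\cC(Y)$-distance at least $K_6$ from $A_i^c$, this gives $d_Y(\pi(\omega_n),A_i^c)\geqslant K_6-C$ and hence $f(n)\leqslant 2C-K_6<-2R_1D$, once $K_6$ is large enough (here is where $K_6>2R_1+M$ is consumed). I expect this step to be the main obstacle: one must pass from convergence to the \emph{limit set} $\overline{B_{i+1}}$ — rather than literal entry of the path into $B_{i+1}$ — to a finite-time estimate on the $\cC(Y)$-projection, while carefully tracking the breakdown of bounded geodesic image across $\widehat{N}_Y$ and the merely coarse definition of $\pi$.

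Granting these three facts, the proposition follows. Put $n^{\ast}=\min\{\,n:\ f(n)\leqslant 2R_1D\,\}$; it is well defined with $1\leqslant n^{\ast}\leqslant m$ since $f(0)>2R_1D$ and $f(m)<-2R_1D$. The slow-variation bound gives $f(n^{\ast})\geqslant f(n^{\ast}-1)-4R_1D>2R_1D-4R_1D=-2R_1D$, while $f(n^{\ast})\leqslant 2R_1D$ by the choice of $n^{\ast}$, so $|f(n^{\ast})|\leqslant 2R_1D$. Finally $\omega_{n^{\ast}}\in B_i$: if instead $\pi(\omega_{n^{\ast}})\in A_i^c$ then $d_Y(\pi(\omega_{n^{\ast}}),A_i^c)=0$ and $f(n^{\ast})=d_Y(\pi(\omega_{n^{\ast}}),A_{i+1})\geqslant K_6>2R_1D$, contradicting the previous line. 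Hence $\omega_{n^{\ast}}\in B_i$ with $|f(n^{\ast})|\leqslant 2R_1D$, i.e.\ $\omega_{n^{\ast}}\in E_i$, which is what we wanted.
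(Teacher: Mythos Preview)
Your argument is essentially the paper's, with the discrete intermediate value step spelled out more carefully than the paper bothers to. The structure --- bounded step size in $\cC(Y)$, start far on one side, end far on the other, hence cross the midpoint band --- is identical.

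The one place where you make your life harder than necessary is the ``eventually negative'' step. You worry about fellow-travelling rays, portions of geodesics outside $\widehat{N}_Y$, and tracking the breakdown of bounded geodesic image; the paper avoids all of this with a single clean observation. Since $\lambda\in\fmin$ and there is a sequence $l_n\in B_{i+1}$ with $l_n\to\lambda$, the Gromov product $(\omega_n\mid l_n)\to\infty$. In a $\delta$-hyperbolic space this means the entire geodesic $[\omega_n,l_n]$ eventually stays at distance $\geqslant(\omega_n\mid l_n)-O(\delta)$ from the basepoint, hence misses the bounded set $\widehat{N}_Y$ altogether for $n$ large. Theorem~\ref{bounded-image} then applies to the whole segment and gives $d_Y(\pi(\omega_n),\pi(l_n))\leqslant M$ directly, so $d_Y(\pi(\omega_n),A_{i+1})\leqslant M$ for all large $n$. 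No fellow-travelling, no ``portions'', and the constant is exactly $M$ --- which is why the hypothesis on $K_6$ involves $M$ and not some unspecified $C$. With this simplification your anticipated ``main obstacle'' disappears.
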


\begin{proof}
Suppose $\omega_n$ converges to $\lambda \in \overline{B_{i+1}}$. Then, there is a sequence $l_n \in B_{i+1}$ such that $l_n$ converges to $\lambda$, and so the Gromov product $(\omega_n \vert l_n)$ converges to $\infty$. The foliation $\lambda$ is minimal, so for $n$ large enough, a geodesic joining $\omega_n$ to $l_n$ misses $\widehat{N}_Y$. Theorem~\ref{bounded-image} then implies that $d_Y(\pi(\omega_n), \pi(l_n)) \leqslant  M$. In particular, for all $n$ sufficiently large, $d_Y( \pi(\omega_n), B_{i+1}) \leqslant  M$ i.e. the projection of $\omega_n$ has to get within distance $M$ of $A_{i+1}$.

As $\mu$ has finite support, the sample path satisfies $d(\omega_n, \omega_{n+1}) \leqslant  D$, where $d$ is the actual distance in $G$. Since $\pi$ is coarsely $2R_1$-Lipshitz, we have $d_Y(\pi(\omega_n, \omega_{n+1})) \leqslant  2R_1D$. Therefore, if the nesting distance satisfies $K_6 > 2R_1 + M$, then there is some $\omega_n$ such that
\[
\vert d_Y(\pi(\omega_n) , A_{i+1}) - d_Y(\pi(\omega_n), A_i^c )\vert \leqslant  2R_1D
\]
i.e. there is an $\omega_n \in E_i$.
\end{proof}

\begin{proof} (of Theorem~\ref{exp-decay})
We will compute the probability a sample path converges into $\overline{B_{k+1}}$, given that it converges into $\overline{B_k}$. By Proposition~\ref{midpoint}, any sample path $\p$ that converges into $\overline{B_{k+1}}$ has to hit the midpoint set $E_k$. So we can condition on an element $g$ in $E_k$.

Consider the collection of all random walks starting at $g$. Since $\fmin$ is the disjoint union of the sets $\overline{B_k^c} \cup (\overline{B_k} \setminus \overline{B_{k+1}}) \cup \overline{B_{k+1}}$, a random walk converges into precisely one of these sets. Set $p_1 = \nu_g(\overline{B_k^c}), p_2 = \nu_g(\overline{B_k} \setminus \overline{B_{k+1}}), p_3 = \nu_g(\overline{B_{k+1})}$. Then $p_1 + p_2 +
p_3 = 1$. We want to give an upper bound for the relative probability
\[
\frac{ \nu_g(\overline{B_{k+1}}) }{ \nu_g(\overline{B_k}) } = \frac{ p_3 }{ p_2 +p_3  }
\]
i.e. the probability that a sample path starting from $g$ converges into $\overline{B_{k+1}}$, given that it converges into $\overline{B_k}$. The nesting distance $K_6$ can be chosen to be larger than $K_5+D$. Since the sets $B_i$ are $K_6$-nested for $\pi$, the ball $B_{K_5}(\pi(g))$ in $\cC(Y)$ of radius $K_5$ centered at $\pi(g)$, is contained in $A_k \setminus A_{k+1}$. By Lemma~\ref{half-estimate}, the measure $p_2 = \nu_g(\overline{B_k} \setminus \overline{B_{k+1}}) > 1/2$. This implies that for all $k$
\begin{equation} \label{key}
\frac{ \nu_g(\overline{B_{k+1}}) }{ \nu_g(\overline{B_k}) } = \frac{p_3}{p_2 + p_3} < \frac{p_3}{1/2 + p_3}
= 1 - \frac{1}{2}\left( \frac{1}{1/2 + p_3} \right) = \frac{1}{2}
\end{equation}
Using Estimate~\eqref{key} inductively we have $\nu(\overline{B_k}) = \nu(\overline{\pi^{-1}(A_k)}) < (1/2)^k$ as required.
\end{proof}

\section{Sub-surface projections and the push-in sequence}\label{Pushin}

We switch back to the curve complex $\cC(\Sigma)$. For any train track $\tau$, let $T(\tau)$ be the set of simple closed curves carried by $\tau$.

At this point, the key ideas for the construction of the singular set are in place: Start by embedding in $\Sigma$ a non-classical interval exchange with combinatorics $\phi_0$ constructed in Section~\ref{Combinatorics}. Call this the initial interval exchange and denote it by $J$. Choose the embedding such that the base-point $\bb$ is outside $T(J)$. By Theorem~\ref{Uniform-distortion}, almost every expansion from $J$ becomes $C$-distributed infinitely often with each instance of $C$-distribution having combinatorics $\phi_0$. In particular, the union of stages that are the $m$-th instances of $C$-distribution has full Lebesgue measure. Follow each such $C$-distributed stage by $n$ successive Dehn twist sequences, to get a collection of stages. The union of stages in this collection should be the set $Y^{(m)}_n$, which figures in Theorem~\ref{BC2}. The estimate~\eqref{vol} implies that for the Lebesgue measure, each stage in the collection has proportion $\approx 1/n^j$ in the corresponding $C$-distributed stage. Taking union over the $C$-distributed stages implies $\ell(Y^{(m)}_n) \approx 1/n^j$, as required in Theorem~\ref{BC2}. Finally, for each $C$-distributed stage in question, consider the sub-surface projection to it's stable vertex cycle. In defining $Y^{(m)}_n$, we followed the $C$-distributed stage by the Dehn twist sequence $n$ times. This should increase sub-surface projections to the stable vertex curve by $n$. So for the harmonic measure, one expects to show by Theorem~\ref{exp-decay}, that the stage obtained after the twists has proportion $\leqslant \exp(-n)$ in the $C$-distributed stage. Taking union over all the $C$-distributed stages would then imply $\nu(Y^{(m)}_n) \leqslant \exp(-n)$.

Except the problem is that the above setup is not the correct one for Theorem~\ref{exp-decay}. To get the exponential decay estimate, Theorem~\ref{exp-decay} requires that a suitable pre-image under the sub-surface projection to the stable vertex cycle, sits inside the $C$-distributed stage in question. In the above setup, this is not true. So we have to finesse a little bit, to get a situation to which Theorem~\ref{exp-decay} applies. In this section, we give the technical details necessary for that.

\subsection{Sub-surface projections to carried curves:} A train track is said to be {\em generic} when every switch in it is trivalent. Let $\tau$ be a generic complete train track. The following proposition proves that the sub-surface projection of the complement $\cC(\Sigma) \setminus T(\tau)$ to an annulus with core curve ``deeply carried'' by $\tau$ is has a universally bounded diameter.

\begin{proposition}\label{proj-deeply}
Let $x \in T(\tau) $ be carried by $\tau$ such that it passes over every branch of $\tau$ at least thrice. Let $\pi$ be the sub-surface projection to the annulus $A$ with core curve $x$. Then 
\[
{\text diam}\left( \pi (\cC(\Sigma) \setminus T(\tau)) \right) \leq 5
\]
\end{proposition}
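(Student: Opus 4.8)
The plan is to estimate the twisting of non-carried curves around $x$ by working in the annular cover of $\Sigma$ associated with $x$. Recall that $\pi$ is defined on a curve $y$ exactly when $i(x,y) \neq 0$, and that for distinct vertices $u,v$ of $\cC(x)$ one has $d_{\cC(x)}(u,v) = 1 + |u\cdot v|$; so it is enough to produce a universal bound on $|\pi(y)\cdot\pi(y')|$ as $y,y'$ range over curves of $\cC(\Sigma)\setminus T(\tau)$ meeting $x$. (The bounded geodesic image theorem is not directly usable here, since a $\cC(\Sigma)$-geodesic between two non-carried curves can pass arbitrarily close to $x$.) To this end I would fix a fibered neighborhood $N(\tau)$ containing $x$, put a given $y \notin T(\tau)$ in efficient position with respect to $\tau$ (transverse to $\tau$ and to its ties, with no bigons), pass to the annulus cover $\widetilde\Sigma$ and its compactification $\widehat\Sigma$, let $\widehat x$ be the core circle, and analyze the lift $\widehat y$ of a strand of $y$ that runs from one boundary circle of $\widehat\Sigma$ to the other.

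The first thing to extract is the meaning of the hypothesis that $x$ passes over every branch of $\tau$ at least three times. Let $\widehat\tau_0$ be the component of the preimage of $\tau$ containing $\widehat x$, with fibered neighborhood $N(\widehat\tau_0)$. Since $\widehat x$ realizes the closed train route of $x$, the neighborhood $N(\widehat\tau_0)$ runs alongside $\widehat x$ carrying at least three parallel strands of (the preimage of) each branch; in particular it contains $\widehat x$, hence separates the two ends of $\widehat\Sigma$, so every properly embedded arc joining the two boundary circles must cross every full tie of $N(\widehat\tau_0)$. Moreover each lifted complementary triangle of $\tau$ that meets $N(\widehat\tau_0)$ is flanked along each of its three sides by at least three strands of the route of $\widehat x$. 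This ``crowding'' is the quantitative input that fixes the eventual constant.

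The crux --- and the step I expect to be the main obstacle --- is to show that a curve $y \notin T(\tau)$ produces a lift $\widehat y$ with twisting around $\widehat x$ bounded by a universal amount. I would argue by a dichotomy: if $\widehat y$ crossed the ties of $N(\widehat\tau_0)$ monotonically for more than one fundamental domain of the deck action, then the corresponding sub-arc of $y$ in $\Sigma$ would shadow the entire closed route of $x$, and because that route covers every branch at least three times there would be no room for the complementary arc of $y$ to make a cusp (illegal) turn at a switch, or to escape $N(\tau)$ into a complementary triangle, without creating a bigon with one of the three flanking $x$-strands --- contradicting either efficient position or the assumption $y \notin T(\tau)$; so instead $\widehat y$ is confined to a sub-region of $N(\widehat\tau_0)$ of universally bounded complexity running transversely across $\widehat x$. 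Making this ``no room for an illegal turn'' step rigorous is the delicate part: it needs the standard combinatorics of efficient position and of legal versus cusp turns at the trivalent switches of the complete track $\tau$, combined with a careful count of how the $\ge 3$ parallel strands of $x$ obstruct such turns. Once it is in place, any two lifts $\widehat y,\widehat y'$ arising from curves outside $T(\tau)$ both cross a fixed reference tie exactly once and otherwise lie in a common region of bounded complexity, so $|\pi(y)\cdot\pi(y')| \le 4$; hence $d_{\cC(x)}(\pi(y),\pi(y')) \le 5$, the value $5$ being exactly what the ``thrice'' hypothesis buys, and the proposition follows.
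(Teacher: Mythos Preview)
Your overall strategy---pass to the annulus cover, use efficient position, and bound the algebraic intersection number of lifts---matches the paper exactly, and your identification of the crux is accurate. But the mechanism you propose for exploiting the ``thrice'' hypothesis is more complicated than what is actually needed, and the step you flag as delicate (``no room for an illegal turn without a bigon'') is not the argument the paper runs.

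The paper's key observation, which you do not use, is that a curve $y \notin T(\tau)$ in efficient position necessarily has a component of $y \cap N(\tau)$ that is a \emph{tie} (a dual arc across some branch). One then lifts that tie to an arc $S$ in $\widehat\Sigma$ meeting the core lift $X_0$. Now the ``thrice'' hypothesis is invoked in a very direct way: since $x$ passes over every branch at least three times, there are other (inessential) lifts $X_+$, $X_-$ of $x$ that $S$ also meets, one on each side of $X_0$. These inessential lifts cut off discs $D_\pm$ in $\widehat\Sigma$, and the essential lift $Y$ of $y$ extending $S$ cannot meet $X_\pm$ a second time without creating a bigon downstairs. Hence $Y \subset D_+ \cup S \cup D_-$. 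Doing the same for $z$ gives $Z \subset D'_+ \cup T \cup D'_-$, and from this containment one reads off $|Y \cdot Z| \le 2$ at once.

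So rather than analyzing the lifted train track $\widehat\tau_0$ and arguing about monotone tie-crossings over a fundamental domain, you should use the inessential lifts of $x$ itself as barriers. This replaces your dichotomy argument entirely and makes the count of the constant transparent.
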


\begin{proof}
The main ingredient of the proof is the apparatus of {\em efficient position} developed by Masur, Mosher and Schleimer \cite{Mas-Mos-Sch}.

\subsubsection{Efficient position:} Given a train track $\tau$, let $N(\tau)$ be a neighborhood of a train track foliated by ties. For non-classical interval exchanges, this is just our picture of the interval with bands. For a generic track $\tau$, a curve $c$ is said to be in efficient position with respect to $\tau$, if
\begin{enumerate}
\item every component of $c \cap N(\tau)$ is either a tie or carried by $\tau$,
\item every region in $\Sigma \setminus (c \cup N)$ has negative index or is a rectangle. (See \cite{Mas-Mos-Sch} for the precise definition of index)
\end{enumerate} 
For our purposes, all we need is the implication that if $c$ is in efficient position with respect to $\tau$, then there is no embedded bigon in the complement $\Sigma \setminus (c \cup \tau)$. 
The main theorem of Masur, Mosher and Schleimer is 

\begin{theorem}(Theorem 4.1 in \cite{Mas-Mos-Sch}:) 
Let $\tau$ be a birecurrent generic train track and suppose $c$ is a non-peripheral simple closed curve. Then efficient position of $c$ with respect to $\tau$ exists and is unique up to rectangle swaps, annulus swaps, and isotopies of $\Sigma$ preserving the foliation of $N(\tau)$ by ties.
\end{theorem}

Going back to the proof of Proposition~\ref{proj-deeply}, let $y$ and $z$ be curves in $\cC(\Sigma) \setminus T(\tau)$. By the theorem of Masur-Mosher-Schleimer, the curves $y$ and $z$ can be put in efficient position with respect to $\tau$. Since $\tau$ is complete, every efficient position of $y$ meets some branch of $\tau$ dually i.e. the intersection with the branch is a tie. Similarly for every efficient position of $z$. Let $\widehat{\Sigma}$ be the compactified annulus cover of $\Sigma$ corresponding to $A$. There are countably many lifts of $x$ to $\widehat{\Sigma}$; exactly one of these is the core curve of $\widehat{\Sigma}$. Call this lift $X_0$. All other lifts of $x$ are inessential arcs with both endpoints on the same boundary component of $\widehat{\Sigma}$.  

\begin{figure}[htb]
\begin{center}
\ \psfig{file=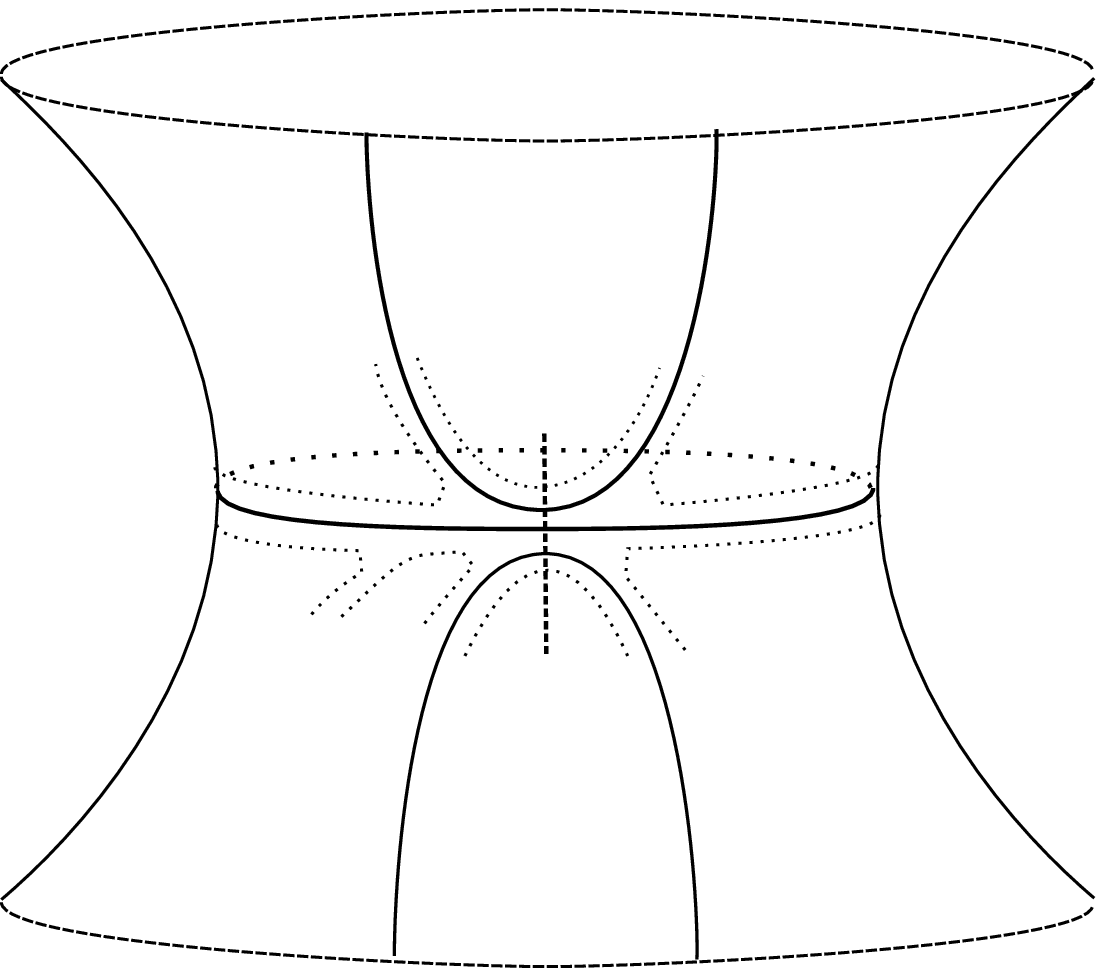, height=2.5truein, width=2.5truein} \caption{Lifts to $\widehat{\Sigma}$} \label{lift}
\end{center}
\setlength{\unitlength}{1in}
\begin{picture}(0,0)(0,0)
\put(0,2.1){$S$}\put(-0.6,2.6){$X_+$}\put(-0.6,0.8){$X_-$}\put(-0.6,1.4){$b$}\put(-1,1.8){$X_0$}\put(1.3,3){+}\put(1.3,0.6){-}
\end{picture}
\end{figure}

Let $s$ be the tie that is the intersection of an efficient position of $y$ with a branch of $\tau$. Choose a lift $S$ of the tie $s$ meeting $X_0$ such that the point of intersection $S \cap X_0$ is sandwiched in between intersections of $S$ with other lifts of $x$ on either side of $X_0$. See Figure~\ref{lift}.

On either side of $X_0$, let $X_+$ and $X_-$ be the ``outermost'' lifts of $x$ that $S$ intersects i.e.  the lifts $X_+$ and $X_-$ are the farthest to the point $S  \cap X_0$ along $S$. For instance, the dotted lines in Figure~\ref{lift} show a portion about $s$ of the tie neighborhood $N(\tau)$ lifted up to $\widehat{S}$. The condition that $x$ passes over each branch at least thrice implies that there is at least one lift of $x$ that passes through the branch marked $b$ and intersects $S$. But as is clear from the picture, a lift through $b$ is not the outermost one in the above sense. 

Since the lifts $X_{\pm}$ are inessential arcs, they cut off discs $D_{\pm}$ in $\widehat{\Sigma}$. In Figure~\ref{lift}, the discs $D_\pm$ can be seen to be bounded by $X_\pm$ and the boundary components marked $\pm$. The lift $Y$ of $y$ extending $S$, cannot intersect $X_{\pm}$ again; otherwise, there is a bigon in the complement $\Sigma \setminus (y \cup \tau)$ violating efficient position. This implies that the lift $Y$ is contained in $D_+ \cup S \cup D_-$. 

We can similarly consider an efficient position of $z$, and assume that it intersects $\tau$ dually in a tie $t$ different from $s$. Lifting $t$ up to $\widehat{\Sigma}$ to get a suitable arc $T$, we  repeat the argument above to show that there exists lifts $X'_{\pm}$ of $x$ cutting off discs $D'_{\pm}$ in $\widehat{\Sigma}$, such that a lift $Z$ of $z$ extending $T$ is contained in $D'_+ \cup T \cup D'_-$.  

Fixing the endpoints of $Y$ and $Z$, it follows from the above containments that the algebraic intersection number $Y \cdot Z$ is at most 2. Recall from the initial part of Section~\ref{Projections}, that the distance between $Y$ and $Z$ in the complex $\cC(x)$ is given by $1+ \vert Y \cdot Z \vert$, thus finishing the proof of Proposition~\ref{proj-deeply}. 
\end{proof}

Even though Proposition~\ref{proj-deeply} is stated for generic complete train tracks, the result is also true for non-classical interval exchanges. We can {\em comb} (See Section 1.4 of \cite{Pen-Har}) a non-classical interval exchange moving left to right along the base interval to get a generic train track. For example, see the first picture in Figure 21 of \cite{Dun-DTh}. It can be directly checked that the resulting track is transversely recurrent. Then, by Proposition 1.4.1 of \cite{Pen-Har}, the resulting generic train track is complete, and so Proposition~\ref{proj-deeply} applies to it. The operation of combing is isotopic to identity, and hence the set of carried curves remains unchanged.

\subsection{The push-in sequence:} \label{discuss-pushin} Now we focus on non-classical interval exchanges $\jmath$ that have the combinatorial type $\phi_0$ constructed in Section 7. A directed path $\kappa$ in $\overline{\cG}$ starting at $\phi_0$ and terminating in $\phi_0$ can be realized as a splitting sequence of such $\jmath$. We denote the resulting non-classical interval exchange by $\jmath \ast \kappa$. Let $T(\jmath \ast \kappa)$ be the set of simple closed curves carried by $\jmath \ast \kappa$.

By the nesting lemma of Masur and Minsky viz. Lemma 4.7 of \cite{Mas-Min1}, it is possible to combinatorially fix a directed path $\kappa$ in $\overline{\cG}$, starting from and terminating at $\phi_0$, such that the stable vertex cycle $v(\jmath \ast \kappa)$ passes over every band in $\jmath$ at least thrice. 

Now consider the subsurface projection $\pi$ to the annulus with core curve $v(\jmath \ast \kappa)$. Fix a quasi-isometry of $\cC(v(\jmath \ast \kappa))$ with $\Z$, so that we can assume $\pi$ takes values in $\Z$. The quasi-isometry constant depends only on the topology of $\Sigma$. By composing with a translation of $\Z$, we can assume that $\pi(\bb) =0$. By Proposition~\ref{proj-deeply}, the projection $\pi$ of every point in $\cC(\Sigma) \setminus T(\jmath)$ is within distance $5$ of $\pi(\bb)$. This implies that the pre-image $\pi^{-1}\left((-\infty, -5) \cup (5, \infty)\right)$ is contained in $T(\jmath)$. It is in this sense that $\kappa$ is {\em the push-in sequence}.

\section{The singular set}\label{Singular}

\subsection{Choosing the initial chart:}

An embedding of an interval exchange with combinatorics $\phi_0$ into $\Sigma$, as a complete train track with a single vertex, identifies the initial configuration space $W_0$ with a chart in $\pmf$. We choose the initial embedding such that the base-point $\bb$ does not belong to the set $T(J)$ of simple closed curves carried by the embedded interval exchange $J$.

The construction of the singular set will proceed inside the chart of $\pmf$ given by $J$.

\subsection{Relative probability that the sequence $\kappa \ast n\jmath_0$ follows a $C$-distributed stage:} By Lemma~\ref{nciem-control}, the relative probability that the push-in sequence $\kappa$ follows a $C$-distributed stage $\jmath$ is up to a universal constant $c>1$, the same as the probability $\ell(JQ_\kappa(W_0))$ that an expansion begins with $\kappa$ i.e.
\[
\frac{1}{c} \ell(JQ_\kappa(W_0)) < \frac{\ell(JQ_{\jmath \ast \kappa}(W_0))}{\ell(JQ_\jmath(W_0))} < c \cdot \ell(JQ_\kappa(W_0))
\]
Since $\kappa$ is a priori fixed, whenever the sequence $\kappa$ follows a $C$-distributed stage $\jmath$, the resulting stage $\jmath \ast \kappa$ is $C'$-distributed, for some $C'$ that depends only on $C$ and $d$. Denote the matrix associated to the sequence $\jmath \ast \kappa \ast n \jmath_0$ by $Q_{\jmath,n}$. Again by Lemma~\ref{nciem-control}, there exists a constant $a_1$ that depends only on $a_0$ and $C'$, such that
\[
\frac{1}{a_1n^j} < \frac{\ell (JQ_{\jmath,n}(W_0))}{\ell(JQ_{\jmath \ast \kappa}(W_0))} < \frac{a_1}{n^j}
\]
Hence the relative probability that the sequence $\kappa \ast n \jmath_0$ follows a $C$-distributed stage $\jmath$ satisfies
\begin{equation}\label{relative-prob}
\frac{1}{ca_1n^j} < \frac{\ell (JQ_{\jmath,n}(W_0))}{\ell(JQ_\jmath(W_0))}< \frac{ca_1}{n^j}
\end{equation}
i.e. the relative probability $\ell (JQ_{\jmath,n}(W_0))/\ell(JQ_\jmath(W_0)) \approx 1/n^j$.

\subsection{Construction of the doubly indexed sequence of sets $Y^{(m)}_n$:} By Theorem~\ref{Uniform-distortion}, almost every expansion becomes $C$-distributed infinitely often. Hence, for every non-negative integer $m$, almost every expansion has a stage that is the $m$-th instance of $C$-distribution. When $m=0$, we mean the initial stage itself with no splitting whatsoever.

Let $S_m$ be the set of stages $\jmath$ that are the $m$-th instances of $C$-distribution. For $\jmath \in S_m$, let $Y_\jmath$ be the set of $\x$ in $W_0$ whose expansion begins with $\jmath$ i.e. the subset $Y_\jmath = JQ_\jmath(W_0)$. For distinct $\jmath, \bar{\jmath}$ in $S_m$, the sets $Y_\jmath, Y_{\bar{\jmath}}$ have disjoint interiors. By Theorem~\ref{Uniform-distortion}, the union over all $\jmath \in S_m$ of the sets $Y_\jmath$ is a set of full measure i.e.
\begin{equation}\label{full-measure}
\sum_{\jmath \hskip 2pt \in S_m} \ell(Y_\jmath) = 1
\end{equation}
Follow each $\jmath$ in $S_m$ by the sequence $\kappa \ast n \jmath_0$, and let $Q_{\jmath,n}$ be the matrix associated to $\jmath \ast \kappa \ast n \jmath_0$. Let $Y_{\jmath,n} = JQ_{\jmath,n}(W_0)$. Then, the ratio $\ell(Y_{\jmath,n})/ \ell(Y_\jmath)$ satisfies Estimate~\eqref{relative-prob}. Let $Y^{(m)}_n$ be the union
\[
Y^{(m)}_n = \bigcup_{\jmath \hskip 2pt \in S_m} Y_{\jmath,n}
\]
For $m=0$, the set $Y^{(0)}_n$ is just $JQ_{n \jmath_0}(W_0)$ i.e the set of $\x$ whose expansion begins with $n \jmath_0$. First, we estimate the Lebesgue measure of $Y^{(m)}_n$.

\begin{lemma}
\begin{equation} \label{poly-decay-estimate}
\frac{1}{ca_1n^j} < \ell(Y^{(m)}_n) < \frac{ca_1}{n^j}
\end{equation}
\end{lemma}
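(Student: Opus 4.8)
The plan is to decompose $\ell(Y^{(m)}_n)$ over the stages $\jmath \in S_m$ and then apply the relative probability estimate~\eqref{relative-prob} together with the full-measure identity~\eqref{full-measure}. Since for distinct $\jmath, \bar\jmath \in S_m$ the sets $Y_\jmath = JQ_\jmath(W_0)$ have disjoint interiors, and the sets $Y_{\jmath,n} = JQ_{\jmath,n}(W_0)$ satisfy $Y_{\jmath,n} \subseteq Y_\jmath$, the sets $Y_{\jmath,n}$ also have pairwise disjoint interiors as $\jmath$ ranges over $S_m$. Hence $\ell(Y^{(m)}_n) = \sum_{\jmath \in S_m} \ell(Y_{\jmath,n})$ up to the fact that boundaries have measure zero. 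This is the first step: reduce the measure of a union to a sum.

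Next I would substitute the two-sided bound from Estimate~\eqref{relative-prob}. For each $\jmath \in S_m$ we have
\[
\frac{1}{ca_1 n^j}\,\ell(Y_\jmath) < \ell(Y_{\jmath,n}) < \frac{ca_1}{n^j}\,\ell(Y_\jmath).
\]
Summing over $\jmath \in S_m$ and using that $ca_1 n^j$ (resp. $ca_1/n^j$) does not depend on $\jmath$, I can pull the factor out of the sum to get
\[
\frac{1}{ca_1 n^j}\sum_{\jmath \in S_m}\ell(Y_\jmath) < \ell(Y^{(m)}_n) < \frac{ca_1}{n^j}\sum_{\jmath \in S_m}\ell(Y_\jmath).
\]
Then apply~\eqref{full-measure}, which states that $\sum_{\jmath \in S_m}\ell(Y_\jmath) = 1$, so the middle and right sums collapse to $1$, yielding exactly~\eqref{poly-decay-estimate}. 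The key point is that the constant $ca_1$ (coming from Lemma~\ref{nciem-control} applied twice and Proposition~\ref{Dehn}) is independent of both $m$ and $\jmath$, which is precisely what makes the bound uniform in $m$.

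I expect the only genuine subtlety — not really an obstacle — to be the bookkeeping about null sets: Estimate~\eqref{relative-prob} is a statement about the stages $\jmath$ in $S_m$, and Theorem~\ref{Uniform-distortion} guarantees that $\bigcup_{\jmath \in S_m} Y_\jmath$ is a set of full measure rather than literally all of $W_0$. So one should note that the expansions avoiding all such $\jmath$ form a measure-zero set (as in the $PSL(2,\Z)$ discussion, these correspond to expansions that terminate or never become $C$-distributed), and hence do not affect any of the measure computations. Beyond that, everything is a routine manipulation of a convergent sum of non-negative terms, and no further input is needed.
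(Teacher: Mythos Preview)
Your proposal is correct and follows essentially the same approach as the paper: decompose $\ell(Y^{(m)}_n)$ as $\sum_{\jmath \in S_m} \ell(Y_{\jmath,n})$, apply Estimate~\eqref{relative-prob} to the ratio $\ell(Y_{\jmath,n})/\ell(Y_\jmath)$, and use~\eqref{full-measure}. The paper's proof is just a more compressed version of what you wrote, omitting the remarks about disjoint interiors and null sets.
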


\begin{proof}
Write
\[
\ell(Y^{(m)}_n) = \sum_{\jmath \hskip 2pt \in S_m} \ell(Y_{\jmath,n})
= \sum_{\jmath \hskip 2pt \in S_m} \ell(Y_\jmath) \frac{\ell(Y_{\jmath,n})}{\ell(Y_\jmath)}
\]
Estimate~\eqref{relative-prob} for the ratio $\ell(Y_{\jmath,n})/ \ell(Y_\jmath)$, and Equation~\eqref{full-measure} finishes the proof.
\end{proof}
\noindent In particular, the above lemma shows that for any $m_1, m_2$,
\begin{equation}\label{equal}
\ell(Y^{(m_1)}_n) \approx \ell(Y^{(m_2)}_n)
\end{equation}

\subsection{Almost independence of $Y^{(m)}_n$ for the Lebesgue measure:} Let $(m_1,n_1)$ and $(m_2,n_2)$ be a pair of indices with $m_1 < m_2$. Since the sets $Y_\iota: \iota \in S_{m_1}$ is a partition of a set of full measure, it is enough to check that almost independence holds in each $Y_\iota$. For any $\iota \in S_{m_1}$ and $\jmath \in S_{m_2}$, either $Y_\jmath$ is contained in $Y_{\iota,n_1}$ or has interior disjoint from it. By Lemma~\ref{nciem-control}, given $\iota \in S_{m_1}$, the relative probabilities satisfy
\begin{eqnarray*}
P(Y^{(m_1)}_{n_1}\vert \iota) &\approx& \ell(Y^{(0)}_{n_1})\\
P(Y^{(m_2)}_{n_2}\vert \iota) &\approx& \ell(Y^{(m_2- m_1)}_{n_2})\\
P(Y^{(m_1)}_{n_1}\cap Y^{(m_2)}_{n_2} \vert \iota) &\approx& \ell(Y^{(0)}_{n_1} \cap Y^{(m_2- m_1)}_{n_2})
\end{eqnarray*}
So it is enough to check that for any $m>0$, the sets $Y^{(0)}_{n_1}$ and $Y^{(m)}_{n_2}$ are almost independent. Again, the main point is that for all $\jmath$ in $S_m$, the set $Y_\jmath$ is either contained in $Y^{(0)}_{n_1}$ or has interior disjoint from $Y^{(0)}_{n_1}$. Let $T_m$ be the subset of $S_m$ consisting of those $\jmath$ for which $Y_\jmath$ is contained in $Y^{(0)}_{n_1}$. The union over all $\jmath \in T_m$ of the sets $Y_\jmath$ is a set of full measure in $Y^{(0)}_{n_1}$. Hence, using Estimate~\eqref{relative-prob} and Equation~\eqref{equal}, we get
\begin{eqnarray*}
\ell \left( Y^{(0)}_{n_1} \cap Y^{(m)}_{n_2} \right) &\approx& \sum_{\jmath \hskip 2pt \in T_m} \ell(Y_{\jmath,n_2})\\
&\approx& \sum_{\jmath \hskip 2pt \in T_m} \ell(Y^{(0)}_{n_2})\ell(Y_\jmath)\\
&\approx& \ell(Y^{(m)}_{n_2}) \sum_{\jmath \hskip 2pt \in J_m} \ell(Y_\jmath)\\
&\approx& \ell(Y^{(m)}_{n_2}) \ell(Y^{(0)}_{n_1})
\end{eqnarray*}
showing almost independence.

\subsection{Harmonic measure estimate for $Y^{(m)}_n$:} Recall from Remark~\ref{wind} that the combinatorics $\phi_0$ has the property that simple closed curves carried by an interval exchange with combinatorics $\phi_0$ can wind around the stable vertex cycle $v$ in one direction only. Let $\pi$ be the projection to the annulus with core curve $v$ and use the quasi-isometry between $\cC(v)$ and $\Z$ to think of $\pi$ as a map to $\Z$. Since simple closed curves carried by the interval exchange can wind around $v$ in one direction only, the projection of the set of carried curves is a one-sided interval of the form $[M_1, \infty)$ or $(-\infty, M_1]$.

Assume that for the initial interval exchange $J$, the base-point $\bb$ intersects the stable vertex cycle $v(J \ast \kappa)$, such that $\pi_{J \ast \kappa}(T(J \ast \kappa))$ is the one sided interval $[M_1, \infty)$, where $\pi_{J \ast \kappa}$ is the sub-surface projection to the annulus with core curve $v(J \ast \kappa)$. Here it is assumed that by composing with a translation of $\Z$ we have arranged that $\pi(\bb) =0$.

For $\jmath \in S_m$, let $v(\jmath \ast \kappa)$ be the stable vertex cycle of the interval exchange given by $\jmath \ast \kappa$. Denote by $\pi_{\jmath \ast \kappa}$, the sub-surface projection to the annulus with core curve $v(\jmath \ast \kappa)$. Fix a quasi-isometry between $\cC(v(\jmath \ast \kappa))$ and $\Z$, so that we can assume that $\pi_{\jmath \ast \kappa}$ takes values in $\Z$ with the projection of the base-point $\pi_{\jmath \ast \kappa}(\bb) = 0$.

The push-in sequence $\kappa$ ensures that the pre-image $\pi_{\jmath \ast \kappa}^{-1}((-\infty, -5) \cup (5, \infty))$ sits entirely inside the set $T(\jmath)$. In addition, the observation in the first paragraph of this subsection says that $\pi_{\jmath \ast \kappa}(T(\jmath \ast \kappa))$ is a one sided interval.

We have the lemma:

\begin{lemma}\label{one-sided}
For all positive integers $m$ and for all $\jmath \in S_m$
\[
\pi_{\jmath \ast \kappa}(T(\jmath \ast \kappa)) \subset [M_1-5,\infty)
\]
where $\pi_{\jmath \ast \kappa}$ is the sub-surface projection to the annulus with core curve $v(\jmath \ast \kappa)$, and $\pi(\bb) = 0$.
\end{lemma}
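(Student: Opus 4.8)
\subsection*{Proof proposal}

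The plan is to reduce the statement, via equivariance of the subsurface projection, to the normalization that has already been imposed on the initial interval exchange $J$, and then to absorb the remaining discrepancy using the push-in property (Proposition~\ref{proj-deeply}).

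First I would produce a mapping class relating $\jmath \ast \kappa$ to $J \ast \kappa$. Since $\jmath \in S_m$ is a stage of the expansion starting from $J$ and is combinatorially equal to $\phi_0$ (Theorem~\ref{Uniform-distortion}), the splitting sequence $\iota$ from $J$ to $\jmath$ is a directed loop at the node $\phi_0$ of $\overline{\cG}$, so by the principle recalled at the start of Section~\ref{Combinatorics} it is realized by a mapping class $g$ with $g(\jmath) = J$ as embedded tracks. Splitting commutes with the $G$-action, so $g(\jmath \ast \kappa) = J \ast \kappa$; hence $g$ carries $v(\jmath \ast \kappa)$ to $v(J \ast \kappa)$, carries $T(\jmath \ast \kappa)$ to $T(J \ast \kappa)$, and carries $T(\jmath)$ to $T(J)$. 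Because $g$ matches up the distinguished band $B$ and base interval $I$ on the two sides, it matches the orientation convention of Remark~\ref{wind} on $v(\jmath \ast \kappa)$ with that on $v(J \ast \kappa)$, so the induced map $g_\ast\colon \cC(v(\jmath \ast \kappa)) \to \cC(v(J \ast \kappa))$ is orientation-preserving, i.e.\ a translation once both complexes are quasi-isometrically identified with $\Z$ compatibly with their orientations.

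Next I would run the two sides through equivariance of $\pi$. Applying $\pi_{J \ast \kappa}(g(x)) = g_\ast(\pi_{\jmath \ast \kappa}(x))$ with $x$ ranging over $T(\jmath \ast \kappa)$ gives $g_\ast(\pi_{\jmath \ast \kappa}(T(\jmath \ast \kappa))) = \pi_{J \ast \kappa}(T(J \ast \kappa)) = [M_1,\infty)$. In the $\Z$-coordinate on $\cC(v(\jmath \ast \kappa))$ normalized by $\pi_{\jmath \ast \kappa}(\bb) = 0$ the translation $g_\ast$ is $y \mapsto y + c$ with $c = g_\ast(\pi_{\jmath \ast \kappa}(\bb)) = \pi_{J \ast \kappa}(g(\bb))$, again by equivariance, so $\pi_{\jmath \ast \kappa}(T(\jmath \ast \kappa)) = [M_1 - c,\infty)$. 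In particular this already records that the interval is one-sided to the right, so all that remains is to bound $c$ above by $5$.

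The bound on $c$ is where the push-in sequence does its work. Since $g(T(\jmath)) = T(J)$ and the base-point satisfies $\bb \notin T(J) \supseteq T(\jmath)$, we get $g(\bb) \notin T(J)$, so $\bb$ and $g(\bb)$ both lie in $\cC(\Sigma) \setminus T(J)$. The sequence $\kappa$ was chosen precisely so that $v(J \ast \kappa)$ passes over every band of $J$ at least thrice, so Proposition~\ref{proj-deeply} (applied to the combed generic model of $J$) bounds the diameter of $\pi_{J \ast \kappa}(\cC(\Sigma)\setminus T(J))$ by $5$; since $\pi_{J \ast \kappa}(\bb) = 0$ this forces $|c| \leqslant 5$, hence $\pi_{\jmath \ast \kappa}(T(\jmath \ast \kappa)) = [M_1 - c,\infty) \subseteq [M_1 - 5,\infty)$. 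The case $m = 0$ is just the normalization assumption ($g = \mathrm{id}$). I expect the only real friction to be the bookkeeping of the second paragraph — keeping the two $\Z$-identifications, the orientation conventions of Remark~\ref{wind}, and the variance of $g$ all mutually consistent so that $g_\ast$ is genuinely a positive translation — rather than any new geometric content, since beyond equivariance everything rides on Proposition~\ref{proj-deeply}.
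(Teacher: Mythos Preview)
Your proposal is correct and follows essentially the same route as the paper: produce a mapping class identifying $\jmath$ with $J$, use equivariance of the annular subsurface projection to transport the one-sided interval $[M_1,\infty)$, and then invoke Proposition~\ref{proj-deeply} (via the push-in sequence $\kappa$) to bound the translation offset by $5$. The only cosmetic differences are that the paper's mapping class goes in the opposite direction (sending $J$ to $\jmath$) and it applies Proposition~\ref{proj-deeply} on the $\jmath$ side rather than on the $J$ side; your added care about the sign of $g_\ast$ is a point the paper leaves implicit.
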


\begin{proof}
Since $\jmath$ and $J$ have the same combinatorics $\phi_0$, there is a mapping class $g$ that sends the chart given by $J$ to the chart given by $\jmath$ i.e. we have $g^{-1}(T(\jmath)) = T(J)$. It follows that $g^{-1}(T(\jmath \ast \kappa)) = T(J \ast \kappa)$. For the sub-surface projection $\pi_{\jmath \ast \kappa}$ to $v(\jmath \ast \kappa)$, set the origin in $\Z$ by $\pi_{\jmath \ast \kappa}(g \bb)$ instead of $\pi_{\jmath \ast \kappa}(\bb)$. With this choice of origin in $\Z$, we have $\pi_{\jmath \ast \kappa}(T(\jmath \ast \kappa)) = [M_1, \infty)$ because of equivariance. Finally, notice that since $\bb$ lies outside $T(J)$, the point $g \bb$ lies outside $g(T(J))= T(\jmath)$. By Proposition~\ref{proj-deeply}, the projections $\pi_{\jmath \ast \kappa}(g \bb)$ and $\pi_{\jmath \ast \kappa}(\bb)$ are within distance 5 of each other finishing the argument.
\end{proof}

For any interval exchange $\jmath \in S_m$, to define the set $Y_{\jmath, n}$, we followed the sequence $\jmath \ast \kappa$ by $n \jmath_0$. This means that the set $T(\jmath \ast \kappa \ast n \jmath_0)$ is obtained from the set $T(\jmath \ast \kappa)$ by $n$ positive Dehn twists in the stable vertex cycle $v(\jmath \ast \kappa)$. By the equivariance property~\eqref{equivariance}, this increases the projection under $\pi_{\jmath \ast \kappa}$ by $n$. Let $a(n)$ denote the greatest integer less than or equal to $(n+M_1 - 10)/K_6$, where $K_6$ is the nesting distance required in Theorem~\ref{exp-decay}. For $n$ large enough, Theorem~\ref{exp-decay}, Lemma~\ref{one-sided} and the push-in property of $\kappa$, in particular, the fact that $Y_\jmath \supset \pi^{-1}_{\jmath \ast \kappa}(5, \infty)$ imply that the harmonic measures satisfy the estimate
\begin{equation*}
\frac{\nu(Y_{\jmath, n})}{\nu(Y_\jmath)} \leqslant \frac{\nu(\overline{\pi_{\jmath \ast \kappa}^{-1}[M_1-5 +n,\infty)})}{\nu(\overline{\pi_{\jmath \ast \kappa}^{-1}[5,\infty)})} < \left(\frac{1}{2}\right)^{a(n)}
\end{equation*}
Taking union over all $\jmath \in S_m$ we get
\begin{equation} \label{exp-decay-estimate}
\nu(Y^{(m)}_n) = \sum_{\jmath \hskip 2pt \in S_m} \nu(Y_{\jmath,n}) < \left(\frac{1}{2}\right)^{a(n)} \cdot \sum_{\jmath \hskip 2pt \in S_m} \nu(Y_\jmath) = \left(\frac{1}{2}\right)^{a(n)}
\end{equation}
The number $a(n)$ increases linearly in $n$. Thus we get the exponential decay we want.

\subsection{The singular set:}
We have shown that the doubly indexed sequence of sets $Y^{(n)}_m$ satisfy almost independence and the polynomial decay estimate~\eqref{poly-decay-estimate} for the Lebesgue measure, and the exponential decay estimate~\eqref{exp-decay-estimate} for the harmonic measure. Hence, Proposition~\ref{BC2} constructs a set $X$ that has positive Lebesgue measure and zero harmonic measure. The set
\[
Z  = \bigcup_{g \in G} gX
\]
is a measurable $G$-invariant subset of $\pmf$. By the ergodicity of the action of the mapping class group $G$ on $\pmf$ \cite{Mas} \cite{Ker}, the set $Z$ has full Lebesgue measure. On the other hand, since $Z$ is a countable union of sets with zero harmonic measure, it has zero harmonic measure. This proves Theorem~\ref{singular}.

\subsection{Concluding Remarks:} Theorem~\ref{singular} is true for all initial distributions for which the estimate of Theorem~\ref{exp-decay} holds. This should be a larger set of initial distributions than just the finitely supported ones, but the precise description of such distributions is not clear to us.

\end{document}